\documentclass[12pt,reqno]{amsart}
\usepackage{amsmath}
\usepackage{amssymb}
\usepackage{amstext}
\usepackage{a4wide}
\usepackage{graphicx}
\allowdisplaybreaks \numberwithin{equation}{section}
\usepackage{color}
\usepackage{cases}

\numberwithin{equation}{section}

\newtheorem{theorem}{Theorem}[section]
\newtheorem{proposition}[theorem]{Proposition}

\newtheorem{lemma}[theorem]{Lemma}

\theoremstyle{definition}

\newenvironment{customthm}[1]
{\innercustomthm}
{\endinnercustomthm}

\newtheorem{definition}[theorem]{Definition}

\theoremstyle{remark}
\newtheorem{remark}[theorem]{Remark}

\begin{document}

\title[Helical vortices with small cross-section for 3D incompressible Euler equation]
{Helical vortices with small cross-section for 3D incompressible Euler equation}

\author{Daomin Cao, Jie Wan}
	
\address{Institute of Applied Mathematics, Chinese Academy of Sciences, Beijing 100190, and University of Chinese Academy of Sciences, Beijing 100049,  P.R. China}
\email{dmcao@amt.ac.cn}
\address{School of Mathematics and Statistics, Beijing Institute of Technology, Beijing 100081,  P.R. China}
\email{wanjie@bit.edu.cn}


\begin{abstract}
In this article, we construct traveling-rotating helical vortices with small cross-section to the 3D incompressible Euler equations in an infinite pipe, which tend asymptotically to singular helical vortex filament evolved by the binormal curvature flow. The construction is based on studying a general semilinear elliptic problem in divergence form
\begin{equation*}
\begin{cases}
-\varepsilon^2\text{div}(K(x)\nabla u)=  (u-q|\ln\varepsilon|)^{p}_+,\ \ &x\in \Omega,\\
u=0,\ \ &x\in\partial \Omega,
\end{cases}
\end{equation*}
for small values of $ \varepsilon. $ Helical vortex solutions concentrating near several helical filaments with polygonal symmetry are also constructed.

\textbf{Keywords:} Incompressible Euler equation; Binormal curvature flow; Helical symmetry; Semilinear elliptic equations; Variational method.
\end{abstract}

\maketitle

\section{Introduction and main results}
The movement of an ideal incompressible  flow  confined in a 3D  domain $ D $ is governed by the following Euler equation
\begin{equation}\label{Euler eq}
\begin{cases}
\partial_t\mathbf{v}+(\mathbf{v}\cdot \nabla)\mathbf{v}=-\nabla P,\ \ &D\times (0,T),\\
\nabla\cdot \mathbf{v}=0,\ \ &D\times (0,T),\\
\mathbf{v}(\cdot, 0)=\mathbf{v}_0(\cdot), \ \ &D,
\end{cases}
\end{equation}
where the domain $ D\subseteq \mathbb{R}^3 $, $ \mathbf{v}=(v_1,v_2,v_3) $ is the velocity field, $ P$ is the scalar pressure and  $ \mathbf{v}_0 $ is the initial velocity field.  If $ D $ has a boundary,  the following impermeable boundary condition
is usually assumed
\begin{equation*}
\mathbf{v}\cdot \mathbf{n}=0,\ \ \partial D\times (0,T),
\end{equation*}
where $ \mathbf{n} $ is the outward unit normal to $ \partial D $.

Define the associated vorticity field   $ \mathbf{w}=(w_1,w_2,w_3)=curl \mathbf{v}=\nabla\times \mathbf{v} $, which describes the rotation of the fluid.  Then $ \mathbf{w} $ satisfies the vorticity equations
\begin{equation}\label{Euler eq2}
\begin{cases}
\partial_t\mathbf{w}+(\mathbf{v}\cdot \nabla)\mathbf{w}=(\mathbf{w}\cdot \nabla)\mathbf{v},\ \ &D\times (0,T),\\
\mathbf{w}(\cdot, 0)=\nabla\times \mathbf{v}_0(\cdot), \ \ &D.
\end{cases}
\end{equation}
For background of the 3D incompressible Euler equation, see the classical literature \cite{MB, MP}.

 Helmholtz \cite{He} began the study of Euler equation in 1858, who first considered the vorticity equations of the flow and found that the vortex rings, which are toroidal regions in which the vorticity has small cross-section, translate with a constant speed alone the axis of symmetry. The translating speed of vortex rings was then studied by Kelvin and Hick \cite{Lamb} in 1899. Define  the circulation of a vortex
\begin{equation}\label{1001}
c=\oint_l\mathbf{v}\cdot \mathbf{t}dl=\iint_{\sigma}\mathbf{w}\cdot\mathbf{n}d\sigma,
\end{equation}
where $ l $ is any oriented curve with tangent vector field $ \mathbf{t} $ that
encircles the vorticity region once and $ \sigma $ is any surface with boundary $ l $.
 \cite{Lamb} showed that if the vortex ring has radius $ r^* $, circulation $ c $ and its cross-section $ \varepsilon $ is small, then the vortex
ring moves at the velocity
\begin{equation}\label{1002}
\frac{c}{4\pi r^*}\left( \ln\frac{8r^*}{\varepsilon}-\frac{1}{4}\right).
\end{equation}
Then Da Rios \cite{DR} in 1906, and Levi-Civita \cite{LC} in 1908, formally found the general law of motion of a vortex filament with a small section of radius $ \varepsilon $ and a fixed circulation, uniformly distributed around an evolving curve $ \Gamma(t) $, which is well-known as the binormal curvature flow, or the  localized induction approximation (LIA). Roughly speaking, under suitable assumptions on the solution, the curve evolves by the  binormal  flow, with a large velocity of order $ | \ln\varepsilon| $. More precisely, if $ \Gamma(t) $ is parameterized as $ \gamma(s, t) $, where $ s $ is the  parameter of arclength, then $ \gamma(s, t) $ asymptotically obeys a law of the form (see \cite{LC2}, p. 30, Eq. (8') and \cite{Ric2} p. 260 Eq. (62))
\begin{equation}\label{1003}
\partial_t \gamma=\frac{c}{4\pi}|\ln\varepsilon|(\partial_s\gamma\times\partial_{ss}\gamma)=\frac{c\bar{K}}{4\pi}|\ln\varepsilon|\mathbf{b}_{\gamma(t)},
\end{equation}
where $ c $ is the circulation of the velocity field on the boundary of sections
to the filament, which is assumed to be a constant independent of $ \varepsilon $, $ \mathbf{b}_{\gamma(t)}  $ is the binormal unit vector and $ \bar{K} $ is its local curvature. If we scale $ t = | \ln\varepsilon|^{-1}\tau $, then
\begin{equation}\label{1004}
\partial_\tau \gamma=\frac{c\bar{K}}{4\pi}\mathbf{b}_{\gamma(\tau)}.
\end{equation}
Hence, under LIA vortex filaments
move simply in the binormal direction with speed proportional to the local curvature and the circulation. It is worthwhile to note that, when $ \Gamma $ is a circular filament, the leading term of \eqref{1002} coincides with the coefficient of right hand side of \eqref{1003} since in this case the local curvature $ \bar{K}=\frac{1}{r^*}. $ The localized induction approximation found by Da Rios was applied to various physical problems, for instance, the induction due to electric currents in a wire \cite{DR2}, the gravitational effect associated with Saturnian rings \cite{LC3} and vortex motion \cite{LC2}, for more detail, see the survey papers by Ricca \cite{Ric,Ric2}.

From mathematical justification, Jerrad and Seis \cite{JS} first gave a precise form to Da Rios’ computation under some mild conditions on a solution to \eqref{Euler eq2} which remains suitably concentrated
around an evolving vortex filament. Their result shows that under some conditions of a solution $ \mathbf{w}_\varepsilon $ of \eqref{Euler eq2}, there holds in the sense of distribution,
\begin{equation}\label{1005}
\mathbf{w}_\varepsilon(\cdot,|\ln\varepsilon|^{-1}\tau)\to c \delta_{\gamma(\tau)}\mathbf{t}_{\gamma(\tau)},\ \ \text{as}\ \varepsilon\to0,
\end{equation}
 where $ \gamma(\tau) $ satisfies \eqref{1004}, $ \mathbf{t}_{\gamma(\tau)} $ is the tangent unit vector of $ \gamma $ and $ \delta_{\gamma(\tau)} $ is the uniform Dirac measure on the curve. See \cite{JS2} for more results of this problem.

Until now the existence of a family of solutions to \eqref{Euler eq2} satisfying \eqref{1005}, where $ \gamma(\tau) $ is a given curve  evolved by the binormal flow \eqref{1004}, is still an open problem. This problem is well-known as the vortex filament conjecture, which is unsolved except for the filament being several kinds of special curves: the straight lines, the traveling circles and the traveling-rotating helices.
For the problem of vortex concentrating near straight lines, it corresponds to the planar Euler equations concentrating near a collection of given points governed by the 2D point vortex model, see \cite{CF,CLW,DDMW,Li,MP,SV} for example. When the filament is a traveling circle with radius $ r^* $, by \eqref{1004} the curve is
\begin{equation}\label{1006}
\gamma(s,\tau)=\left( r^*\cos\left( \frac{s}{r^*}\right), r^*\sin\left( \frac{s}{r^*}\right), \frac{c}{4\pi r^*}\tau\right)^t,
\end{equation}
where $ \mathbf{v}^t $ is the transposition of a vector $ \mathbf{v} $. Fraenkel \cite{Fr1} first gave a construction of  vortex rings with small cross-section without change of form concentrating near a traveling circle satisfying \eqref{1006} in sense of \eqref{1005} and then many articles showed the desingularization results under a variety of conditions, such as constructing vortex rings in different kinds of domains with different vortex profiles, see \cite{ALW, Bur, DV, FB} for instance.

For vortex filament being a helix satisfying \eqref{1004},  the curve is parameterized as
\begin{equation}\label{1007}
\gamma(s,\tau)=\left( r_*\cos\left( \frac{-s-a_1\tau}{\sqrt{k^2+r_*^2}}\right),  r_*\sin\left( \frac{-s-a_1\tau}{\sqrt{k^2+r_*^2}}\right), \frac{ks-b_1\tau}{\sqrt{k^2+r_*^2}}\right)^t,
\end{equation}
where $ r_* > 0, k\neq 0 $ are constants characterizing  the distance between a point in $ \gamma(\tau) $ and the $ x_3 $-axis and the pitch of the helix, and
\begin{equation*}
a_1=\frac{ck}{4\pi (k^2+r_*^2)}, \ b_1=\frac{cr_*^2}{4\pi (k^2+r_*^2)}.
\end{equation*}
Note that the local curvature and torsion of the helix are $ \frac{r_*}{k^2+r_*^2} $ and $ \frac{k}{k^2+r_*^2} $ respectively, and the parametrization \eqref{1007} satisfies \eqref{1004}. It   should be noted that the curve parameterized by \eqref{1007} is a traveling-rotating helix. Let us define  for any $ \theta\in[0,2\pi] $
\begin{equation*}
\bar{R}_\theta=\begin{pmatrix}
\cos\theta & \sin\theta  \\
-\sin\theta &\cos\theta
\end{pmatrix}, \  \
\bar{Q}_\theta=\begin{pmatrix}
\bar{R}_\theta & 0  \\
0 & 1
\end{pmatrix}.
\end{equation*}
One computes directly that
\begin{equation*}
\gamma(s,\tau)=\bar{Q}_{\frac{a_1\tau}{\sqrt{k^2+r_*^2}}}\gamma(s,0)+\left( 0,0,-\frac{b_1\tau}{\sqrt{k^2+r_*^2}}\right)^t.
\end{equation*}
We can readily check that \eqref{1007} with $ k>0 $ and $ k<0 $ correspond to the left-handed helix and the right-handed helix  respectively (for consistency, throughout this paper we always choose  a right-handed Cartesian reference centered at the origin). The problem of global well-posedness of solutions to the vorticity equation \eqref{Euler eq2} with helical symmetry was studied in many articles, see \cite{AS,BLN,Du,Jiu} for instance. For a helix $ \gamma(\tau) $ satisfying \eqref{1007}, there are a few results of existence of true solutions of \eqref{Euler eq2} concentrating on this curve in sense of \eqref{1005}. The only result is shown by D$\acute{\text{a}}$vila et al. \cite{DDMW2}, who considered traveling-rotating invariant Euler flows with right-handed helical symmetry concentrating near a single  helix and multiple helices in the whole space $ \mathbb{R}^3. $ In their work, by considering
\begin{equation*}
\begin{split}
-\text{div}(K_H(x)\nabla u)=f_\varepsilon\left( u-\alpha|\ln\varepsilon|\frac{|x|^2}{2}\right) \ \ \text{in}\ \ \mathbb{R}^2,
\end{split}
\end{equation*}
where $ K_H $ is an elliptic operator in divergence form (defined by \eqref{coef matrix}), $ f_\varepsilon(t)=\varepsilon^2e^t $ and $ \alpha $ is chosen properly, the authors construct solutions concentrating near a helix in the distributional sense.
Note that by the choice of  $ f_\varepsilon $, the support set of vorticity is still the whole plane.

The aim of this paper is to construct traveling-rotating solutions to Euler equations \eqref{Euler eq2} with helical symmetry in an infinite pipe, such that the support set of vortex is a helical tube with small cross-section $ \varepsilon $ without change of form, which tends asymptotically to a traveling-rotating helix  \eqref{1007} in sense of \eqref{1005}. To get these result, we study the existence and asymptotic behavior of solutions to a general semilinear elliptic equations (see \eqref{eq1}).
It should be noted that, Euler equations with helical symmetry can be regarded as the general case of 2D and 3D axisymmetric Euler equations. The cases $ k\to+\infty $ and $ k=0 $ correspond to the 2D Euler equations and 3D axisymmetric Euler equations, respectively. In contrast to the 2D and 3D axisymmetric problems, the associated operator $ \mathcal{L}_H $ in vorticity equations (see \eqref{vor str eq}) is an elliptic operator in divergence form, which can bring essential difficulty in the construction of solutions. First, it seems impossible to reduce the second-order operator $ \mathcal{L}_H $ to the standard Laplace operator by means of a single change of coordinates. Second, lack of understanding the properties of the Green's function of a general elliptic operator in divergence form is also a challenge. Moreover, since the eigenvalues of $ K_H $ are different, solutions of the associated limiting equations are not radially symmetric functions, which is totally different from the 2D and 3D axisymmetric cases.

To state our results, we need to introduce some notations first. Since the helical vortices we are to construct is translating-rotating symmetric, domains of the flow must be   helical domains with  rotating symmetry about $ x_3 $ axis, which are  the whole space and  infinite pipes with circular cross section. For any $ R^*>0 $, define $ B_{R^*}(0)\times \mathbb{R}=\{(x_1,x_2,x_3)\mid (x_2,x_2)\in B_{R^*}(0), x_3\in \mathbb{R}\} $ an infinite pipe in $ \mathbb{R}^3 $ whose section is a disc with radius $ R^* $. For two sets $ A,B $, define $ dist(A,B)=\min_{x\in A,y\in B}|x-y| $ the distance between sets $ A$ and $B $ and $ diam(A) $ the diameter of the set $ A $.

Our first result is concerned with the desingularization of traveling-rotating helical vortices  in $ B_{R^*}(0)\times \mathbb{R} $, whose support set has small cross-section $ \varepsilon $ and concentrates near a single left-handed helix  \eqref{1007} in sense of \eqref{1005}.

\begin{theorem}\label{thm01}
Let $ k>0 $, $ c>0 $ and $ r_*\in (0,R^*) $ be any given numbers. Let  $ \gamma(\tau) $ be the helix parameterized by equation \eqref{1007}. Then for any $ \varepsilon\in (0,\varepsilon_0] $ for some $ \varepsilon_0>0 $, there exists
a classical solution pair $ (\mathbf{v}_\varepsilon, P_\varepsilon)(x,t)\in C^1(B_{R^*}(0)\times \mathbb{R}\times \mathbb{R}^+) $ of  \eqref{Euler eq} such that the support set of  $ \mathbf{w}_\varepsilon $ is a topological traveling-rotating helical tube that does not change form and concentrates near the helix in sense of \eqref{1005}, that is for all $ \tau $,
\begin{equation*}
\mathbf{w}_\varepsilon(\cdot,|\ln\varepsilon|^{-1}\tau)\to c \delta_{\gamma(\tau)}\mathbf{t}_{\gamma(\tau)},\ \ \text{as}\ \varepsilon\to0.
\end{equation*}
Moreover, one has
\begin{enumerate}
	\item $ \mathbf{v}_\varepsilon\cdot \mathbf{n}=0$ on $\partial B_{R^*}(0)\times \mathbb{R}. $
	\item Define $ A_\varepsilon=supp(\mathbf{w}_\varepsilon)\cap \mathbb{R}^2\times\{0\} $ the cross-section of $ \mathbf{w}_\varepsilon $. Then there are $ R_1,R_2>0 $ such that
	\begin{equation*}
	R_1\varepsilon\leq diam(A_\varepsilon)\leq R_2\varepsilon.
	\end{equation*}
\end{enumerate}
\end{theorem}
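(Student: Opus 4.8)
The plan is to convert the construction of a traveling-rotating helical Euler flow into the analysis of the planar elliptic problem \eqref{eq1}, and then to build concentrating solutions of \eqref{eq1} by a constrained variational scheme. First I would use the helical symmetry to reduce the three-dimensional vorticity equation \eqref{Euler eq2} to a scalar problem on the two-dimensional cross-section $\Omega=B_{R^*}(0)$. Writing the velocity in terms of a helical stream function and imposing the traveling-rotating ansatz dictated by \eqref{1007}, the stationary vorticity equation in the moving frame reduces to a functional relation $w=f(\psi-\text{background})$ between the helical vorticity and the stream function. Choosing the profile $f(s)=s_+^{\,p}$ and a background term proportional to $q|\ln\varepsilon|$ turns this relation precisely into \eqref{eq1}, in which $K=K_H$ is the divergence-form coefficient matrix imposed by the helical geometry. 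The scaling factor $\varepsilon^2$ and the large shift $q|\ln\varepsilon|$ are calibrated so that solutions concentrate on a length scale $\varepsilon$ and so that the induced velocity reproduces the binormal law \eqref{1004} with circulation $c$.

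To produce solutions of \eqref{eq1}, I would set up a maximization problem for the energy functional associated with the operator $-\varepsilon^2\,\text{div}(K\nabla\,\cdot\,)$ over an admissible class of functions with a prescribed mass (or vorticity bound) encoding the circulation $c$. Coercivity together with weak compactness yields a maximizer $u_\varepsilon$, and checking that the constraint is attained in the interior recovers the Euler--Lagrange equation \eqref{eq1} with the correct Lagrange multiplier. The core is the asymptotic analysis: rescaling $y=(x-x_\varepsilon)/\varepsilon$ about the concentration point $x_\varepsilon$, the rescaled solutions should converge to the ground state of the frozen-coefficient limiting equation $-\text{div}(K(x_*)\nabla U)=U_+^{\,p}$ on $\mathbb{R}^2$. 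Since the two eigenvalues of $K(x_*)$ differ, this ground state is not radial; it is the image of the radial ground state under the linear map that diagonalizes $K(x_*)$, and keeping track of this anisotropy is essential throughout.

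The location $x_\varepsilon$ is then pinned down by a reduced, Kirchhoff--Routh type functional that combines the regular part of the Green's function of the divergence-form operator with the geometric weight produced by $K_H$; analyzing its critical points at leading order should force $x_*=(r_*,0)$, consistent with the helix \eqref{1007} meeting the cross-section there. For the support estimates, the upper bound $\text{diam}(A_\varepsilon)\le R_2\varepsilon$ follows from comparison and maximum-principle arguments showing that $\{u_\varepsilon>q|\ln\varepsilon|\}$ is contained in a ball of radius $O(\varepsilon)$ about $x_\varepsilon$, while the lower bound $\text{diam}(A_\varepsilon)\ge R_1\varepsilon$ comes from the prescribed mass, which prevents the support from shrinking faster than $\varepsilon$.

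Finally I would reconstruct $(\mathbf{v}_\varepsilon,P_\varepsilon)$ and $\mathbf{w}_\varepsilon$ from $u_\varepsilon$ through the helical stream-function formulas, verify the classical regularity and the impermeability $\mathbf{v}_\varepsilon\cdot\mathbf{n}=0$ on $\partial B_{R^*}(0)\times\mathbb{R}$ (which is built into the Dirichlet condition $u_\varepsilon=0$ on $\partial\Omega$), and deduce the concentration \eqref{1005} from the blow-up profile together with the mass normalization to $c$. The main obstacle, as the authors stress, is precisely the divergence-form, non-constant, anisotropic operator $\mathcal{L}_H$: it cannot be globally flattened to the Laplacian, so the Green's function (and tight control of its regular part) and the anisotropic blow-up analysis with its non-radial limiting profile must be handled directly, and the interaction between the geometric weight and the concentration mechanism is what makes the location analysis delicate.
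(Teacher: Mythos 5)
Your reduction of the 3D helical system to the planar divergence-form problem \eqref{eq01} and the final reconstruction of $(\mathbf{v}_\varepsilon,P_\varepsilon,\mathbf{w}_\varepsilon)$ coincide with the paper's Sections 2 and 6. For the planar problem itself you take a genuinely different route: a constrained maximization with prescribed circulation (a vorticity-method/ground-state construction in the spirit of \cite{CW}), whereas the paper runs a finite-dimensional Lyapunov--Schmidt reduction built on explicit projected profiles $PV_{\delta,\hat{x},\hat{q},z}$ and a reduced energy $P_\delta$. That exchange is admissible in principle, but be aware of two costs: in the variational setting the inclusion of $\{u_\varepsilon>q|\ln\varepsilon|\}$ in an $O(\varepsilon)$-ball is typically the hardest estimate (capacity or iteration arguments), not a routine comparison; and if the additive constant in the background arises as a Lagrange multiplier of your mass constraint, it is an unknown of the problem, not data you may prescribe, which collides with the point below.

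The genuine gap is the determination of the concentration location. In the helical reduction the background is $q(x)=\frac{\alpha}{2}|x|^2+\beta$, with $\alpha$ (angular speed) and $\beta$ (an additive constant) free parameters. At leading order the location is governed by $h(r)=\bigl(\frac{\alpha r^2}{2}+\beta\bigr)^2\frac{k}{\sqrt{k^2+r^2}}$, since the Green's-function contributions are smaller by a factor $\ln|\ln\varepsilon|/|\ln\varepsilon|$. Your claim that the Kirchhoff--Routh analysis ``should force $x_*=(r_*,0)$'' is therefore backwards and, as stated, false: for generic $\beta$ the function $h$ has no interior critical point at $r_*$ (it can be monotone on $(0,R^*)$), and the construction would concentrate at a different radius or not at all. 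One must \emph{choose} $\beta=\frac{\alpha}{2}(3r_*^2+4k^2)$, which is exactly the condition $h'(r_*)=0$ making $r_*$ a strict minimum up to rotation, and $\alpha=\frac{c}{4\pi k\sqrt{k^2+r_*^2}}$, so that the resulting circulation $\frac{k\pi(\alpha r_*^2+2\beta)}{\sqrt{k^2+r_*^2}}$ equals $c$ and the angular velocity $\alpha|\ln\varepsilon|$ of the rotating solution matches that of the intersection point of the helix \eqref{1007} with the plane $x_3=0$. These are the choices \eqref{502}, and checking that three requirements (location, circulation, rotation speed) are simultaneously met by two parameters is the self-consistency of the binormal law; your proposal omits this entirely. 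A second, related omission: $q^2\sqrt{\det K_H}$ is radial, so its extrema form circles and are never strict local extreme points; this is precisely why the paper cannot invoke Theorem \ref{thm1} directly and instead proves rotational invariance of the reduced energy to obtain a critical point ``up to rotation'' (Theorem \ref{thmA}). In your scheme the same degeneracy means the maximizer's location is determined only modulo rotation, and you must rotate the solution so that at $\tau=0$ it concentrates at $(r_*,0)=\gamma(0)\cap\{x_3=0\}$, in order to track the prescribed helix for all $\tau$.
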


\begin{remark}
	By the physical meaning of $ k $, the sign of $ k $ determines two different helical structure. The curve parameterized by \eqref{1007} with $ k>0 $ and $ k<0 $ correspond  to the left-handed helical structure and right-handed helical structure, respectively.  Theorem \ref{thm01}  shows the desingularization of a left-handed helix. For the case $ k<0 $, results are similar.
\end{remark}

One can also construct multiple traveling-rotating helical vortices  in $ B_{R^*}(0)\times \mathbb{R} $ with polygonal symmetry. Let us consider the curve $ \gamma(\tau) $ parameterized by \eqref{1007}. For any integer $ m $, define for
$ i=1\cdots,m $ the curves $ \gamma_i(\tau) $ parameterized by
\begin{equation}\label{1008}
\gamma_i(s,\tau) = \bar{Q}_{\frac{2\pi(i-1)}{m}}\gamma(s,\tau).
\end{equation}
The following result generalizes that of Theorem \ref{thm01} to helical vortices concentrating near multiple helices with polygonal symmetry.
\begin{theorem}\label{thm02}
Let $ k>0 $, $ c>0 $ and $ r_*\in (0,R^*) $ be any given numbers and $ m\geq 2 $ be an integer. Let  $ \gamma_i(\tau) $ be the helix parameterized by   \eqref{1008}. Then for any $ \varepsilon\in (0,\varepsilon_0] $ for some $ \varepsilon_0>0 $, there exists a classical solution pair $ (\mathbf{v}_\varepsilon, P_\varepsilon)(x,t)\in C^1(B_{R^*}(0)\times \mathbb{R}\times \mathbb{R}^+) $ of \eqref{Euler eq} such that the support set of  $ \mathbf{w}_\varepsilon $ is a collection of $ m $ topological traveling-rotating helical tubes that does not change form and  for all $ \tau $,
\begin{equation*}
\mathbf{w}_\varepsilon(\cdot,|\ln\varepsilon|^{-1}\tau)\to c\sum_{i=1}^m \delta_{\gamma_i(\tau)}\mathbf{t}_{\gamma_i(\tau)},\ \ \text{as}\ \varepsilon\to0.
\end{equation*}
Moreover, one has
\begin{enumerate}
	\item $ \mathbf{v}_\varepsilon\cdot \mathbf{n}=0$ on $\partial B_{R^*}(0)\times \mathbb{R}. $
	\item Define $ A_{i, \varepsilon}=supp(\mathbf{w}_\varepsilon)\cap B_{\bar{\rho}}\left( \bar{Q}_{\frac{2\pi(i-1)}{m}}\left(r_*,0\right) \right) \times\{0\} $ for some  small constant $ \bar{\rho}>0 $. Then there are $ R_1,R_2>0 $ such that
	\begin{equation*}
	R_1\varepsilon\leq diam(A_{i, \varepsilon})\leq R_2\varepsilon.
	\end{equation*}
\end{enumerate}
\end{theorem}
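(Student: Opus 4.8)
The plan is to deduce Theorem \ref{thm02} from the same reduction and variational scheme that yields Theorem \ref{thm01}, but carried out on the subspace of functions enjoying the discrete rotational symmetry of the polygon. First I would invoke the helical reduction: seeking a traveling--rotating solution $(\mathbf{v}_\varepsilon,P_\varepsilon)$ with right-handed helical symmetry turns the vorticity equation \eqref{Euler eq2} into the scalar elliptic problem \eqref{eq1} with $K=K_H$ on the cross-section $\Omega=B_{R^*}(0)\subset\mathbb{R}^2$,
\begin{equation*}
-\varepsilon^2\,\mathrm{div}\bigl(K_H(x)\nabla u\bigr)=\bigl(u-q|\ln\varepsilon|\bigr)_+^{p}\ \text{ in }\Omega,\qquad u=0\ \text{ on }\partial\Omega,
\end{equation*}
while the $m$ helices \eqref{1008} meet the plane $x_3=0$ in the vertices $P_i=\bar R_{2\pi(i-1)/m}(r_*,0)$ of a regular $m$-gon. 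Thus the convergence \eqref{1005} to $c\sum_{i}\delta_{\gamma_i}\mathbf{t}_{\gamma_i}$ is equivalent to $u$ concentrating near $\{P_1,\dots,P_m\}$.

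The decisive structural observation is that the coefficient matrix $K_H$ of \eqref{coef matrix}, being built from the identity and the rank-one tensor $x\otimes x$ with a radial weight, is rotationally \emph{covariant}, $K_H(\bar R_\theta x)=\bar R_\theta K_H(x)\bar R_\theta^{t}$; hence $\mathcal{L}_H$ commutes with the planar rotation $\bar R_{2\pi/m}$, and since the (radial) profile weight $q$ is likewise invariant, the whole problem \eqref{eq1} is $\mathbb{Z}_m$-equivariant. I would therefore restrict the variational problem to
\begin{equation*}
X_m=\bigl\{\,u\ \big|\ u(\bar R_{2\pi/m}x)=u(x)\,\bigr\}.
\end{equation*}
The gain is that inside $X_m$ a function concentrating near a single vertex $P_1$ automatically concentrates along its entire orbit $\{P_1,\dots,P_m\}$, so the configuration is pinned down by symmetry up to one radial parameter and need not be tracked vertex by vertex. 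Following the proof of Theorem \ref{thm01}, I would maximize the associated energy functional over the admissible subset of $X_m$ consisting of nonnegative profiles of total circulation $mc$ confined to small disjoint neighborhoods of the $P_i$, show that the maximizer solves \eqref{eq1}, and then run the concentration and uniqueness-of-limiting-profile analysis inside a single fundamental sector, which by covariance is an exact rotated copy of the single-helix analysis.

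The asymptotic conclusions would then follow by expanding the energy as $m$ copies of the single-vortex self-energy of order $|\ln\varepsilon|$ plus the lower-order interaction governed by the regular part of the Green's function $G_H$ of $\mathcal{L}_H$, namely the symmetrized Kirchhoff--Routh term built from $\sum_{i\ne j}G_H(P_i,P_j)$ together with the Robin function. Locating the radius through this reduced functional fixes the common value $r_*$ and yields, exactly as in Theorem \ref{thm01}, the per-blob bound $R_1\varepsilon\le\mathrm{diam}(A_{i,\varepsilon})\le R_2\varepsilon$, the impermeability $\mathbf{v}_\varepsilon\cdot\mathbf{n}=0$ on $\partial B_{R^*}(0)\times\mathbb{R}$, and the distributional convergence to $c\sum_{i=1}^m\delta_{\gamma_i(\tau)}\mathbf{t}_{\gamma_i(\tau)}$.

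I expect the main obstacle to be controlling the cross-interaction between distinct blobs with only the weak pointwise information available for the Green's function of the \emph{divergence-form}, non-radial operator $\mathcal{L}_H$ --- precisely the difficulty flagged in the introduction. Because the vertices sit at fixed mutual distance $2r_*\sin(\pi/m)$, independent of $\varepsilon$, the off-diagonal contributions $G_H(P_i,P_j)$ are bounded and hence formally lower order than the $|\ln\varepsilon|$ self-energy; the delicate part is to prove rigorously that these interactions, and the proximity of the images in neighboring sectors, neither split the concentration set within one sector, nor let it drift between sectors or toward $\partial\Omega$. Establishing the requisite estimates on $G_H$ and its regular part near the polygon, and verifying the strict nondegeneracy of the symmetrized Kirchhoff--Routh functional at the regular $m$-gon configuration, is where the real work lies; once these are secured the remaining arguments parallel the single-helix construction of Theorem \ref{thm01}.
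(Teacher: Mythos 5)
Your central structural observation --- that $K_H(\bar{R}_\theta x)=\bar{R}_\theta K_H(x)\bar{R}_\theta^{t}$ (indeed $K_H(x)=I-\frac{x\otimes x}{k^2+|x|^2}$), so that \eqref{eq1} with radial $q$ on $B_{R^*}(0)$ is $\mathbb{Z}_m$-equivariant, and that imposing the discrete symmetry pins the $m$ blobs to the orbit of a single point and leaves only one radial parameter --- is exactly the mechanism the paper uses: its proof of Theorem \ref{thm02} runs the Lyapunov--Schmidt reduction of Sections 3--5 on polygonal configurations $z_i=\bar{R}_{\frac{2\pi(i-1)}{m}}(z_1)$ and shows, using an equivariant choice of $T_x$, that the reduced energy equals $\frac{m\pi\delta^2}{\ln\frac{R}{\varepsilon}}q^2\sqrt{\det(K_H)}(z_1)$ plus a rotation-invariant remainder $N_\delta(z_1)=O\big(\frac{\delta^2\ln|\ln\varepsilon|}{|\ln\varepsilon|^2}\big)$, and then finds a critical point of this radial function near $|z_1|=r_*$. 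Your variational machinery, however, is genuinely different: the paper never maximizes a constrained functional over vorticity profiles; it constructs $\sum_{i=1}^m V_{\delta,z_i}+\omega_\delta$ and locates a critical point of the finite-dimensional reduced function. A constrained-maximization scheme in the symmetric class is a legitimate alternative in principle, but it is not ``following the proof of Theorem \ref{thm01}.''

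The genuine gaps are these. First, you place ``the real work'' in estimates of $G_H(P_i,P_j)$ near the polygon and in the ``strict nondegeneracy of the symmetrized Kirchhoff--Routh functional at the regular $m$-gon.'' Neither is needed, and the latter is not even the operative mechanism: inside the symmetric class the polygon is exact (imposed by symmetry, not selected by criticality), and the pairwise interactions enter the energy expansion (Lemma \ref{order of main term}) at order $\delta^2/|\ln\varepsilon|^2$, a full power of $|\ln\varepsilon|$ below the leading radial term $\frac{m\pi\delta^2}{\ln\frac{R}{\varepsilon}}\big(\frac{\alpha r^2}{2}+\beta\big)^2\frac{k}{\sqrt{k^2+r^2}}$; they are absorbed into the rotation-invariant remainder and cannot split or displace the blobs. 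What \emph{is} needed, and what your proposal never does, is the tuning of the free constants in $q=\frac{\alpha|x|^2}{2}+\beta$: the paper chooses $\alpha=\frac{c}{4\pi k\sqrt{k^2+r_*^2}}$, $\beta=\frac{\alpha}{2}(3r_*^2+4k^2)$ as in \eqref{502}, and checks by direct computation that then $|z|=r_*$ is a strict local minimum (up to rotation) of the radial profile, that the limiting circulation per tube equals $c$, and --- decisively for the claim ``for all $\tau$'' --- that the angular velocity $\alpha|\ln\varepsilon|$ of the constructed rotating solution coincides with the rotation speed $\frac{c}{4\pi k\sqrt{k^2+r_*^2}}|\ln\varepsilon|$ of the intersection points of the binormal-flow helices \eqref{1008} with the plane $\{x_3=0\}$. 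You treat $r_*$ as an output of the reduced functional (``locating the radius \ldots fixes the common value $r_*$''), but $r_*$ is prescribed in the theorem; without inverting the problem and fixing $(\alpha,\beta)$ accordingly, your construction yields \emph{some} rotating helical solution, and the convergence $\mathbf{w}_\varepsilon(\cdot,|\ln\varepsilon|^{-1}\tau)\to c\sum_{i}\delta_{\gamma_i(\tau)}\mathbf{t}_{\gamma_i(\tau)}$ to the \emph{given} curves cannot be concluded. Second, be careful with orientation: with the choice \eqref{502} the point $(r_*,0)$ is a strict local \emph{minimum} of $q^2\sqrt{\det(K_H)}$ up to rotation, hence of the reduced stream-function energy; maximizing $I_\delta$ over profiles confined near the $P_i$ would drive the maximizer to the boundary of the confinement region, so your scheme only has a chance if run on the dual (prescribed-circulation) functional, whose reduced function has the opposite orientation --- a point your proposal leaves ambiguous. (Minor: for $k>0$ the relevant helices are left-handed, not right-handed.)
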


The paper is organized as follows. In section 2, we deduce the 2D vorticity-stream equations of left-handed helical solutions of \eqref{Euler eq2} and the associated semilinear elliptic equations. A generalized toy model (see \eqref{eq1-1}) and the corresponding desingularization result (see Theorem \ref{thm1}) are introduced.  In section 3, we show the approximate solutions and some basic estimates. In section 4
and section 5, we give proof of Theorem \ref{thm1}. The outline of proofs for  Theorem \ref{thm01} and Theorem \ref{thm02} are given in section 6.

\section{Solution with helical symmetry and a generalized model}
Let us first define left-handed helical symmetric solutions and reduce  \eqref{Euler eq2} to a 2D vorticity-stream model,  see  \cite{DDMW,Du,ET}. Let $ k>0 $. Define a one-parameter group $ \mathcal{G}_k=\{H_{\bar{\rho}}:\mathbb{R}^3\to\mathbb{R}^3 \} $, where
\begin{equation*}
H_{\rho}(x_1,x_2,x_3)^t=(x_1\cos\rho+x_2\sin\rho, -x_1\sin\bar{\rho+x_2\cos\rho}, x_3+k\rho)^t.
\end{equation*}
So $ H_{\rho} $ is a superposition of a rotation in $x_1Ox_2$ plane and a translation in $x_3$ axis, that is,  $ H_{\rho}(x)=\bar{Q}_{\rho}(x)+k\rho(0,0,1) $.   Clearly, $ B_{R^*}(0)\times \mathbb{R} $ is invariant under the group $ \mathcal{G}_k $.

Define a vector field
\begin{equation*}
\overrightarrow{\zeta}= (x_2, -x_1, k)^t.
\end{equation*}
Then $ \overrightarrow{\zeta}  $ is the field of tangents of symmetry lines of $ \mathcal{G}_k $.

Let us define helical functions and vector fields. A scalar function $ h $ is called a $ helical $ function, if $ h(H_{\rho}(x))=h(x) $ for any $ \rho\in\mathbb{R}, x\in B_{R^*}(0)\times \mathbb{R}. $ By direct computations it is easy to see that a $ C^1 $ function $ h $ is helical if and only if
\begin{equation*}
\overrightarrow{\zeta}\cdot\nabla h=0.
\end{equation*}

A vector field $ \mathbf{h}=(h_1,h_2,h_3) $ is called a $ helical $ field, if $ \mathbf{h}(H_{\rho}(x))=R_\rho \mathbf{h}(x) $ for any $ \rho\in\mathbb{R}, x\in B_{R^*}(0)\times \mathbb{R}. $ Direct computation shows that a $ C^1 $ vector field $ \mathbf{h} $ is helical if and only if
\begin{equation*}
\overrightarrow{\zeta}\cdot\nabla \mathbf{h}=\mathcal{R}\mathbf{h},
\end{equation*}
where $ \mathcal{R}=\begin{pmatrix}
0 & 1 & 0 \\
-1 &0 & 0 \\
0 & 0 & 0
\end{pmatrix} $ (see \cite{ET}). Helical solutions of \eqref{Euler eq} are then defined as follows.
\begin{definition}
	A function pair ($\mathbf{v}, P$) is called a $ helical$ solution  of \eqref{Euler eq} in $ B_{R^*}(0)\times \mathbb{R} $, if ($\mathbf{v}, P$) satisfies \eqref{Euler eq} and both vector field $ \mathbf{v} $ and scalar function $ P $ are helical.
\end{definition}
Throughout this paper, helical solutions also need to satisfy  the $ orthogonality~condition $:
\begin{equation}\label{ortho}
\mathbf{v}\cdot \overrightarrow{\zeta}=0,
\end{equation}
that is, the velocity field and $ \overrightarrow{\zeta} $ are orthogonal.

Under the condition \eqref{ortho}, one can check that  the vorticity field $ \mathbf{w} $ satisfies (see \cite{ET})
\begin{equation}\label{w formula}
\mathbf{w}=\frac{w}{k}\overrightarrow{\zeta},
\end{equation}
where $ w=w_3=\partial_{x_1}v_2-\partial_{x_2}v_1 $, the third component of vorticity field $ \mathbf{w} $, is a helical function. Moreover, the first equation of the vorticity equations \eqref{Euler eq2} is equivalent to
\begin{equation*}
\partial_t \mathbf{w}+(\mathbf{v}\cdot \nabla)\mathbf{w}+\frac{1}{k}w \mathcal{R}\mathbf{v}=0.
\end{equation*}
As a consequence, $ w $ satisfies
\begin{equation}\label{vor eq}
\partial_t w+(\mathbf{v}\cdot \nabla)w=0.
\end{equation}

We now introduce a $ stream~function $ and reduce the system \eqref{Euler eq2} to a 2D vorticity-stream equation. Since $ \mathbf{v} $ is a helical vector field, we have $ \overrightarrow{\zeta}\cdot\nabla \mathbf{v}=\mathcal{R}\mathbf{v} $, which implies that \begin{equation}\label{101}
x_2\partial_{x_1}v_3-x_1\partial_{x_2}v_3+k\partial_{x_3}v_3=0.
\end{equation}
The orthogonal condition shows that
\begin{equation}\label{102}
x_2v_1-x_1v_2+kv_3=0.
\end{equation}
It follows from the incompressible condition, \eqref{101} and \eqref{102} that
\begin{equation*}
\begin{split}
0=&\partial_{x_1}v_1+\partial_{x_2}v_2+\partial_{x_3}v_3=\partial_{x_1}v_1+\partial_{x_2}v_2-\frac{x_2}{k}\partial_{x_1}v_3+\frac{x_1}{k}\partial_{x_2}v_3\\
=&\partial_{x_1}v_1+\partial_{x_2}v_2-\frac{x_2}{k^2}\partial_{x_1}(-x_2v_1+x_1v_2)+\frac{x_1}{k^2}\partial_{x_2}(-x_2v_1+x_1v_2)\\
=&\frac{1}{k^2}\partial_{x_1}[(k^2+x_2^2)v_1-x_1x_2v_2]+\frac{1}{k^2}\partial_{x_2}[(k^2+x_1^2)v_2-x_1x_2v_1].
\end{split}
\end{equation*}
Since $ B_{R^*}(0) $ is simply-connected, we can define a stream function $ \varphi:B_{R^*}(0)\to \mathbb{R} $ such that $ \partial_{x_2}\varphi=\frac{1}{k^2}[(k^2+x_2^2)v_1-x_1x_2v_2],   \partial_{x_1}\varphi=-\frac{1}{k^2}[(k^2+x_1^2)v_2-x_1x_2v_1]$, that is,
\begin{equation*}
\begin{pmatrix}
\partial_{x_1}\varphi  \\
\partial_{x_2}\varphi
\end{pmatrix}=-\frac{1}{k^2}
\begin{pmatrix}
-x_1x_2 & k^2+x_1^2 \\
-(k^2+x_2^2) & x_1x_2
\end{pmatrix}
\begin{pmatrix}
v_1   \\
v_2
\end{pmatrix},
\end{equation*}
or equivalently,
\begin{equation}\label{103}
\begin{pmatrix}
v_1   \\
v_2
\end{pmatrix}
=-\frac{1}{k^2+x_1^2+x_2^2}
\begin{pmatrix}
x_1x_2 & -k^2-x_1^2 \\
k^2+x_2^2 & -x_1x_2
\end{pmatrix}
\begin{pmatrix}
\partial_{x_1}\varphi  \\
\partial_{x_2}\varphi
\end{pmatrix}.
\end{equation}

By the definition of $ w $ and \eqref{103}, we get
\begin{equation}\label{voreq1}
\begin{split}
w=&\partial_{x_1}v_2-\partial_{x_2}v_1=(-\partial_{x_2}, \partial_{x_1})\begin{pmatrix}
v_1   \\
v_2
\end{pmatrix}\\
=&(-\partial_{x_2}, \partial_{x_1})\left( -\frac{1}{k^2+x_1^2+x_2^2}
\begin{pmatrix}
x_1x_2 & -k^2-x_1^2 \\
k^2+x_2^2 & -x_1x_2
\end{pmatrix}
\begin{pmatrix}
\partial_{x_1}\varphi  \\
\partial_{x_2}\varphi
\end{pmatrix}\right)\\
=& -(\partial_{x_1}, \partial_{x_2})\left( \frac{1}{k^2+x_1^2+x_2^2}
\begin{pmatrix}
k^2+x_2^2 & -x_1x_2 \\
-x_1x_2 &  k^2+x_1^2
\end{pmatrix}
\begin{pmatrix}
\partial_{x_1}\varphi  \\
\partial_{x_2}\varphi
\end{pmatrix}\right)\\
=&\mathcal{L}_H\varphi,
\end{split}
\end{equation}
where $ \mathcal{L}_H\varphi=-\text{div}(K_H(x_1,x_2)\nabla\varphi) $ is a second order elliptic operator of divergence type with the coefficient matrix
\begin{equation}\label{coef matrix}
K_H(x_1,x_2)=\frac{1}{k^2+x_1^2+x_2^2}
\begin{pmatrix}
k^2+x_2^2 & -x_1x_2 \\
-x_1x_2 &  k^2+x_1^2
\end{pmatrix}.
\end{equation}
Clearly from the definition of the matrix $ K_H $, one has
\begin{enumerate}
	\item[(1).] $ K_H $ is a positive definite matrix and $(K_{H}(x))_{ij}\in  C^{\infty}(\overline{B_{R^*}(0)}) $ for $  i,j=1, 2. $
	\item[(2).] $ \mathcal{L}_H $ is uniformly elliptic, namely, $ \lambda_1=1, \lambda_2=\frac{k^2}{k^2+|x|^2} $ are two eigenvalues of $ K_H $ which have positive lower and upper bounds.
\end{enumerate}

From \eqref{vor eq}, \eqref{102} and \eqref{103}, one has
\begin{equation}\label{voreq2}
\begin{split}
0=&\partial_t w+v_1\partial_{x_1}w+v_2\partial_{x_2}w+v_3\partial_{x_3}w\\
=&\partial_t w+v_1\partial_{x_1}w+v_2\partial_{x_2}w+\frac{1}{k}(-x_2v_1+x_1v_2)\cdot\frac{1}{k}(-x_2\partial_{x_1}w+x_1\partial_{x_2}w)\\
=&\partial_t w+\frac{1}{k^2}(v_1,v_2)\begin{pmatrix}
k^2+x_2^2 & -x_1x_2 \\
-x_1x_2 &  k^2+x_1^2
\end{pmatrix}
\begin{pmatrix}
\partial_{x_1}w  \\
\partial_{x_2}w
\end{pmatrix}\\
=& \partial_t w-\frac{1}{k^2(k^2+x_1^2+x_2^2)}
(\partial_{x_1}\varphi, \partial_{x_2}\varphi)
\begin{pmatrix}
x_1x_2 & k^2+x_2^2 \\
-k^2-x_1^2 & -x_1x_2
\end{pmatrix}
\begin{pmatrix}
k^2+x_2^2 & -x_1x_2 \\
-x_1x_2 &  k^2+x_1^2
\end{pmatrix}
\begin{pmatrix}
\partial_{x_1}w  \\
\partial_{x_2}w
\end{pmatrix}\\
=&\partial_t w-(\partial_{x_1}\varphi, \partial_{x_2}\varphi)\begin{pmatrix}
0 & 1 \\
-1 &  0
\end{pmatrix}
\begin{pmatrix}
\partial_{x_1}w  \\
\partial_{x_2}w
\end{pmatrix}\\
=&\partial_t w+\partial_{x_2}\varphi\partial_{x_1}w-\partial_{x_1}\varphi\partial_{x_2}w\\
=&\partial_t w+\nabla^\perp\varphi\cdot \nabla w,
\end{split}
\end{equation}
where $ \perp $ denotes the clockwise rotation through $ \pi/2 $, i.e., $ (a,b)^\perp=(b,-a) $. As for the boundary condition of $ \varphi $,   it follows from $ \mathbf{v}\cdot \mathbf{n}=0 $ on $ \partial B_{R^*}(0)\times \mathbb{R} $ that (see (2.66), \cite{ET}) $ \varphi $ is a constant on $ \partial B_{R^*}(0). $ Without loss of generality, we set $ \varphi|_{\partial B_{R^*}(0)}=0. $ Thus  the 2D vorticity-stream equations of \eqref{Euler eq2} in $ B_{R^*}(0)\times \mathbb{R} $ is
\begin{equation}\label{vor str eq}
\begin{cases}
\partial_t w+\nabla^\perp\varphi\cdot \nabla w=0,\ \ &\text{in}\ B_{R^*}(0),\\
w=\mathcal{L}_H\varphi,\ \ &\text{in}\ B_{R^*}(0),\\
\varphi=0,\ \ &\text{on}\ \partial B_{R^*}(0).
\end{cases}
\end{equation}
For a solution pair $ (w,\varphi) $ of \eqref{vor str eq}, one can recover left-handed  helical velocity field $ \mathbf{v} $ and vorticity field by $ \mathbf{w} $ of \eqref{Euler eq2} by using \eqref{103}, \eqref{102}, $ \mathbf{v}(x, t) = \bar{Q}_{\frac{x_3}{k}} \mathbf{v}\left( H_{-\frac{x_3}{k}}(x), t\right)  $ and \eqref{w formula}.

Let $ \alpha $ be a constant. To construct traveling-rotating helical solutions of \eqref{Euler eq2}, we look for solutions of \eqref{vor str eq} being of the form
\begin{equation}\label{104}
w(x',t)=W(\bar{R}_{-\alpha|\ln\varepsilon| t}(x')),\ \ \varphi(x',t)=\varPhi(\bar{R}_{-\alpha|\ln\varepsilon| t}(x')),
\end{equation}
where $ x'=(x_1,x_2)\in B_{R^*}(0)$. Then one computes directly  that $( W, \varPhi) $ satisfies
\begin{equation}\label{rot eq}
\begin{cases}
\nabla W\cdot \nabla^\perp \left( \varPhi-\frac{\alpha}{2}|x'|^2|\ln\varepsilon|\right) =0,\\
W=\mathcal{L}_H\varPhi,\\
\varPhi|_{\partial B_{R^*}(0)}=0.
\end{cases}
\end{equation}
So formally if
\begin{equation}\label{105}
\mathcal{L}_H\varPhi=W=f_\varepsilon\left( \varPhi-\frac{\alpha}{2}|x'|^2|\ln\varepsilon|\right) \ \ \text{in}\ B_{R^*}(0),
\end{equation}
for some function $ f_\varepsilon $, then \eqref{rot eq} automatically holds. In the sequel we write $ x' $ as $ x=(x_1,x_2) $ and look for  solutions of a semilinear elliptic equations
\begin{equation}\label{rot eq2}
\begin{cases}
-\text{div}\cdot(K_H(x)\nabla\varPhi)=\frac{1}{\varepsilon^2}\left( \varPhi-\left( \frac{\alpha}{2}|x|^2+\beta\right) |\ln\varepsilon|\right)^p_+,\  &x\in B_{R^*}(0),\\
\varPhi(x)=0,\ &x\in \partial B_{R^*}(0),
\end{cases}
\end{equation}
where $ p>1 $, $ \alpha,\beta $ are constants to be determined later. For a solution $ \varPhi $ of \eqref{rot eq2}, one can get a rotating-invariant solution pair $(w, \varphi)$ of \eqref{vor str eq} with angular velocity $ \alpha|\ln\varepsilon| $ by simply using \eqref{105} and \eqref{104}.

Inspired by equations \eqref{rot eq2}, let us study the existence of solutions concentrating around a couple of points to a more general model 
\begin{equation}\label{eq1-1}
\begin{cases}
-\varepsilon^2\text{div}(K(x)\nabla u)= (u-q|\ln\varepsilon|)^{p}_+,\ \ &x\in \Omega,\\
u=0,\ \ &x\in\partial \Omega,
\end{cases}
\end{equation}
where $ \Omega\subset \mathbb{R}^2 $ is a simply-connected bounded domain with smooth boundary, $ \varepsilon\in(0,1) $ and $ p>1 $. $ K=(K_{i,j})_{2\times2} $ is a positive definite matrix satisfying
\begin{enumerate}
	\item[($\mathcal{K}$1).]  $ K_{i,j}(x)\in C^{\infty}(\overline{\Omega}) $ for $ 1\le i,j\le 2. $
	\item[($\mathcal{K}$2).] $ -\text{div}(K(x)\nabla \cdot) $ is a uniformly elliptic operator, that is, there exist  $ \Lambda_1,\Lambda_2>0 $ such that $$ \Lambda_1|\zeta|^2\le (K(x)\zeta|\zeta) \le \Lambda_2|\zeta|^2,\ \ \ \ \forall\ x\in \Omega, \ \zeta\in \mathbb{R}^2.$$
\end{enumerate}
 $ q(x) $ is a function defined in $ \overline{\Omega} $ satisfying
\begin{enumerate}
	\item[(Q1).]  $ q(x)\in C^{\infty}(\overline{\Omega}) $ and $ q(x)>0 $ for any $ x\in\overline{\Omega}. $
\end{enumerate}
Denote  $ det(K) $ the determinant of $ K $.
\begin{theorem}\label{thm1}
Let $ K  $ satisfy $(\mathcal{K}1) $-$(\mathcal{K}2)$ and $ q $ satisfy $ (Q1)$. Then, for any given $ m $ distinct strict local minimum (maximum) points $ x_{0,j}(j=1,\cdots, m)$ of $ q^2\sqrt{det(K)} $ in $ \Omega $, there exists $ \varepsilon_0>0 $, such that for every $ \varepsilon\in(0,\varepsilon_0] $, \eqref{eq1-1} has a solution $ u_\varepsilon $. Moreover, the following properties hold
\begin{enumerate}
	\item  Define the  set $ \bar{A}_{\varepsilon,i}=\left\{u_\varepsilon>q\ln\frac{1}{\varepsilon}\right\}\cap B_{\bar{\rho}}(x_{0,i})  $, where $ \bar{\rho} $ is small. Then there exist $ (z_{1,\varepsilon}, \cdots, z_{m,\varepsilon}) $ and $ R_1,R_2>0 $ independent of $ \varepsilon $ satisfying
\begin{equation*}
\lim_{\varepsilon\to 0}(z_{1,\varepsilon}, \cdots, z_{m,\varepsilon})=(x_{0,1}, \cdots, x_{0,m}),
\end{equation*}	
\begin{equation*}
B_{R_1\varepsilon}(z_{i,\varepsilon})\subseteq \bar{A}_{\varepsilon,i}\subseteq B_{R_2\varepsilon}(z_{i,\varepsilon}).
\end{equation*}
\item Define $ \kappa_i(u_\varepsilon)=\frac{1}{\varepsilon^2}\int_{B_{\bar{\rho}}(x_{0,i})}\left( u_\varepsilon-q\ln\frac{1}{\varepsilon}\right)^{p}_+dx. $  Then
\begin{equation*}
\lim_{\varepsilon\to 0}\kappa_i(u_\varepsilon)=2\pi q\sqrt{det(K)}(x_{0,i}).
\end{equation*}
\end{enumerate}
\end{theorem}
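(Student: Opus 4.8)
The plan is to construct the solution by a finite-dimensional reduction carried out in a variational framework, in which the locations of the $m$ concentration points serve as the reduction parameters and, at leading order, the reduced energy is governed by the function $\Gamma(x):=q^2(x)\sqrt{\det K(x)}$. Since the nonlinearity $t\mapsto t^p_+$ is $C^1$ for $p>1$ and $K,q$ are smooth, any weak solution obtained this way will be classical by elliptic bootstrap, so the whole difficulty lies in existence together with the two quantitative conclusions.

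First I would determine the limiting profile near a single concentration point $x_0$. Writing $u=q|\ln\varepsilon|+\psi$ and rescaling $x=x_0+\varepsilon y$, the equation for $\psi$ in the core becomes, to leading order, $-\text{div}(K(x_0)\nabla_y\psi)=\psi^p_+$, since the term generated by $q|\ln\varepsilon|$ is of lower order. An affine change of variables turning $K(x_0)$ into the identity (and introducing the factor $\sqrt{\det K(x_0)}$ through the Jacobian) reduces this to $-\Delta W=W^p_+$ in $\mathbb{R}^2$; the relevant solution is the radial ground state with $\{W>0\}$ a ball $B_a$, solving the Lane--Emden problem on $B_a$ and continued logarithmically outside. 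This simultaneously fixes the core radius (so that $\bar A_{\varepsilon,i}$ is comparable to a ball of radius of order $\varepsilon$, giving conclusion (1)) and the far field, which must match the Green's function $G$ of $-\text{div}(K\nabla\cdot)$ whose diagonal singularity is $-\tfrac{1}{2\pi\sqrt{\det K(y)}}\ln|x-y|$. Balancing the logarithmic far field against the threshold $q|\ln\varepsilon|$ at the edge of the core forces $\kappa_i\to 2\pi q\sqrt{\det K}(x_{0,i})$, which is precisely conclusion (2).

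Next I would build approximate solutions $U_{\varepsilon,\mathbf z}$ indexed by $\mathbf z=(z_1,\dots,z_m)$ with $z_i\in B_{\bar\rho}(x_{0,i})$, obtained by transplanting and rescaling the profile $W$ at each $z_i$ and adding a Green's-function correction to enforce the boundary condition, and then expand the energy $I_\varepsilon(u)=\tfrac{\varepsilon^2}{2}\int_\Omega K\nabla u\cdot\nabla u\,dx-\tfrac{1}{p+1}\int_\Omega(u-q|\ln\varepsilon|)^{p+1}_+\,dx$. Testing the equation against $u$ and using $\kappa_i\to 2\pi q\sqrt{\det K}$ gives
\begin{equation*}
I_\varepsilon(U_{\varepsilon,\mathbf z})=\pi\,\varepsilon^2|\ln\varepsilon|\sum_{i=1}^m \Gamma(z_i)+O(\varepsilon^2),
\end{equation*}
uniformly and in $C^1$ with respect to $\mathbf z$, where the $O(\varepsilon^2)$ absorbs the self-interaction (Robin function) and mutual-interaction terms; the key point is that these corrections carry no factor $|\ln\varepsilon|$ and the points $x_{0,i}$ are distinct, so the variation of $\mathbf z\mapsto\sum_i\Gamma(z_i)$ dominates. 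Solving \eqref{eq1-1} in the direction transverse to the $2m$-dimensional manifold of approximate solutions (a standard contraction/implicit-function argument, made delicate by the operator $K$) then reduces the problem to finding a critical point of $\mathbf z\mapsto I_\varepsilon$ restricted to $\prod_i B_{\bar\rho}(x_{0,i})$. By the leading expansion this restricted functional is a $|\ln\varepsilon|^{-1}$-order perturbation of $\sum_i\Gamma(z_i)$; hence if each $x_{0,i}$ is a strict local minimum (maximum) of $\Gamma$, minimizing (maximizing) the reduced functional produces an interior optimal configuration $\mathbf z_\varepsilon\to(x_{0,1},\dots,x_{0,m})$ that does not escape to $\partial B_{\bar\rho}(x_{0,i})$, and the associated $u_\varepsilon$ is a genuine solution.

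The main obstacle is precisely the general divergence-form operator that the authors flag in the introduction. Concretely, one must establish the asymptotics of the Green's function of $-\text{div}(K\nabla\cdot)$, including the regularity of its regular part, since no explicit formula is available; because the eigenvalues of $K$ differ, the transplanted profiles are not radial in the original coordinates, so the affine reductions and their $z$-dependence must be tracked carefully through every integral; and obtaining the sharp constants $2\pi q\sqrt{\det K}$ and $\pi\Gamma$ requires controlling the Jacobian factors $\sqrt{\det K(z_i)}$ uniformly. The hardest single step is the energy expansion with uniform $C^1$ control of the remainder, for it is this $C^1$ control that legitimizes passing from critical points of $\Gamma$ to critical points of the reduced functional.
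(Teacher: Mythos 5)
Your proposal follows the same overall strategy as the paper: a Lyapunov--Schmidt reduction around approximate solutions built by transplanting the Lane--Emden profile $-\Delta W=W^p_+$ (ball core plus logarithmic tail), an expansion of the reduced energy with leading term $\pi\varepsilon^2|\ln\varepsilon|\sum_i q^2\sqrt{\det K}(z_i)$, and capture of an interior optimal configuration from the strict local extremality of the $x_{0,i}$. The genuine difference is how the divergence-form operator is handled. You propose to work with the Green's function of the variable-coefficient operator $-\text{div}(K\nabla\cdot)$ itself, developing its diagonal asymptotics and the regularity of its regular part. The paper deliberately avoids this (it flags exactly this as a main difficulty): at each $z_i$ it freezes the coefficient, factors $K(z_i)=(T_{z_i}^{-1})(T_{z_i}^{-1})^t$ by Cholesky, and solves the frozen-coefficient problem explicitly via the Laplacian on the transformed domain $T_{z_i}(\Omega)$; the boundary correction then involves only the classical Green's function of $-\Delta$ on $T_{z_i}(\Omega)$, the discrepancy $K(x)-K(z_i)$ is relegated to the error term $l_{2,\delta}$ inside the reduction, and the $z$-dependence of these Green's functions (needed for \eqref{200-1} and \eqref{200}) is controlled by the Hadamard variational formula, since the domain $T_{z_i}(\Omega)$ moves with $z_i$. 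Your route is viable in principle but requires building variable-coefficient Green's function theory that the paper's device renders unnecessary, at the price of tracking moving domains. Note also that in the multi-peak case the paper must tune the $m$ amplitudes by solving the coupled system \eqref{q_i choice} (your ``balancing'' heuristic, but coupled across peaks through the interaction terms $\bar{G}_{z_j}$); without this coupling the core error is only $O(1/|\ln\varepsilon|)$ rather than $O(\ln|\ln\varepsilon|/|\ln\varepsilon|^2)$, which degrades the contraction step.

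Two further points. First, your ``standard contraction/implicit-function argument'' silently uses the non-degeneracy of $W$, namely that the $L^\infty$ kernel of $-\Delta-pW^{p-1}_+$ is spanned by $\partial_{x_1}W,\partial_{x_2}W$ (Dancer--Yan, the paper's Proposition \ref{Non-degenerate}); this is the heart of the invertibility of the projected linearized operator (Lemma \ref{coercive esti}) and must be invoked, not assumed. Second, you mis-identify the hardest step: uniform $C^1$ control of the energy remainder is neither needed nor what the paper proves. Because the $x_{0,i}$ are strict local extrema, a $C^0$ expansion suffices: one maximizes/minimizes the reduced functional over $\prod_i\overline{B_{\bar\rho}(x_{0,i})}$, and the $C^0$ error (the paper gets $O(\varepsilon^2\ln|\ln\varepsilon|)$ in your normalization, which is $o(\varepsilon^2|\ln\varepsilon|)$, not your claimed $O(\varepsilon^2)$) forces the optimizer to stay interior, hence to be a critical point; the smoothness of $Z\mapsto P_\delta(Z)$ is a separate soft fact (Remark \ref{rk1}). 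Indeed your own final min/max step uses only $C^0$ closeness; $C^1$ control of the remainder would be required only for saddle-type or degenerate critical points of $q^2\sqrt{\det K}$, which the theorem does not treat.
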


To obtain good estimates for approximate solutions, we change \eqref{eq1-1} to the following equivalent problem. Set $ \delta=\varepsilon|\ln\varepsilon|^{-\frac{p-1}{2}} $ and $ u=|\ln\varepsilon|w $, then \eqref{eq1-1} becomes
\begin{equation}\label{eq1}
\begin{cases}
-\delta^2\text{div}(K(x)\nabla w)=  (w-q)^{p}_+,\ \ &x\in \Omega,\\
w=0,\ \ &x\in\partial \Omega.
\end{cases}
\end{equation}
We will construct multi-peak solutions of \eqref{eq1} in sections 3-5.

\begin{remark}
Results of Theorem \ref{thm1} can be regarded as a generalization of the desingularization of classical planar vortex case (see \cite{LYY,SV}) and the vortex ring case (see  \cite{DV}). Note that the cases of planar vortices and  vortex rings correspond to the coefficient matrix $ K_H(x)=Id $ and $ \frac{1}{x_1}Id $, respectively.  In \cite{DV}, by considering   solutions of
\begin{equation*}
\begin{cases}
-\text{div}\left( \frac{1}{b}\nabla u\right) =\frac{1}{\varepsilon^2} b\left(u-q\ln\frac{1}{\varepsilon}\right)^{p-1}_+,\ \ &x\in \Omega,\\	u=0,\ \ &x\in\partial \Omega,
\end{cases}
\end{equation*}
where $ b $ is a scalar function, the authors constructed a family of   $ C^1 $ solutions $ u_\varepsilon $  with nonvanishing circulation  concentrating near a minimizer of $ q^2/b $ as $ \varepsilon\to 0 $. Indeed, if we choose $ K_H(x)=\frac{1}{b}Id $,   then by Theorem \ref{thm1} solutions will shrink to minimizers of $ q^2\sqrt{det(K_H)}=q^2/b $, which  coincides with the results in \cite{DV}.
\end{remark}

\begin{remark}
Recently, \cite{CW} considered desingularization of steady solutions to 3D   Euler equation \eqref{Euler eq2} with helical symmetry in helical domains. Using the critical point theory and the estimates of capacity, \cite{CW} proved the existence and asymptotic behavior of ground state solutions of \eqref{eq1-1} concentrating near a single point. While in this paper,  by using finite-dimensional reduction method, we construct multi-peak solutions concentrating near a collection of  given points, which extends the results in \cite{CW}.
\end{remark}

\section{Approximate solutions}

Our aim is to solve the following equations
\begin{equation*}
\begin{cases}
-\delta^2\text{div}(K(x)\nabla w)=  (w-q)^{p}_+,\ \ &\text{in}\  \Omega,\\
w=0,\ \ &\text{on}\ \partial \Omega.
\end{cases}
\end{equation*}
Note that $ -\text{div}(K(x)\nabla \cdot) $ is a uniformly elliptic operator. Throughout this paper, we denote $ C,C_1,C_2\cdots $ positive constants independent of $ \varepsilon $, whose values may change from line to line.

First, since $ K $ is a $ C^\infty $ positive definite matrix with all eigenvalues having uniformly positive lower and upper bounds, by the Cholesky decomposition one can find a matrix-valued function $ T\in C^\infty(\overline{\Omega}) $ such that for any $ x\in \Omega $, $ T(x) $ is invertible and
\begin{equation}\label{T_z choice}
(T(x)^{-1})(T(x)^{-1})^t=K(x).
\end{equation}
For simplicity, we denote $ T_{x}=T(x) $.

Let $ R>1 $ be a large constant satisfying $ \Omega\subseteq T_{x}^{-1}(B_R(0))+x  $ for any $ x\in \Omega. $ Clearly by  the positive definiteness of  $  K $, such $ R $ exists.

Consider
\begin{equation}\label{eq2}
\begin{cases}
-\delta^2\Delta w=(w-a)^{p}_+,\ \ &\text{in}\ B_R(0),\\
w=0,\ \  &\text{on}\ \partial B_R(0),
\end{cases}
\end{equation}
where $ a>0 $ is a constant. One computes directly that the unique $ C^1 $ positive solution of \eqref{eq2} is
\begin{equation*}
W_{\delta,a}(x)=\begin{cases}
a+\delta^{\frac{2}{p-1}}s_\delta^{-\frac{2}{p-1}}\phi\left(\frac{|x|}{s_\delta}\right),\ \ &|x|\le s_\delta,\\
a\ln\frac{|x|}{R}/\ln\frac{s_\delta}{R},\ \  &s_\delta\leq |x|\le R,
\end{cases}
\end{equation*}
where $ \phi\in H^1_0(B_1(0)) $ satisfies
\begin{equation*}
-\Delta\phi=\phi^p, \ \ \phi>0\ \ \text{in}\ B_1(0),
\end{equation*}
and $ s_\delta $ satisfy the relation
\begin{equation}\label{201}
\delta^{\frac{2}{p-1}}s_\delta^{-\frac{2}{p-1}}\phi'(1)=a/\ln\frac{s_\delta}{R}.
\end{equation}
Hence \eqref{201} is uniquely solvable if $ \delta>0 $ is sufficiently small and
\begin{equation*}
\frac{s_\delta}{\delta|\ln\delta|^{\frac{p-1}{2}}}\to \left( \frac{|\phi'(1)|}{a}\right) ^{\frac{p-1}{2}}\ \ \ \ \text{as}\ \delta\to0.
\end{equation*}
The Pohazaev identity implies
\begin{equation}\label{PI}
\int_{B_1(0)}\phi^{p+1}=\frac{\pi(p+1)}{2}|\phi'(1)|^2,\ \ \int_{B_1(0)}\phi^{p}= 2\pi|\phi'(1)|.
\end{equation}
Now for any $ \hat{x}\in \Omega, \hat{q}>0 $, let $ V_{\delta, \hat{x}, \hat{q}} $ be a $ C^1 $ positive solution of  the following equations
\begin{equation}\label{eq3}
\begin{cases}
-\delta^2\text{div}(K(\hat{x})\nabla v)=  (v-\hat{q})^{p}_+,\ \ & \text{in}\  T_{\hat{x}}^{-1}(B_R(0)),\\
v=0,\ \ &\text{on} \ \partial T_{\hat{x}}^{-1}(B_R(0)).
\end{cases}
\end{equation}
Thus one has $ V_{\delta, \hat{x}, \hat{q}}(x)=W_{\delta, \hat{q}}(T_{\hat{x}}x) $. Indeed, let $ u(x)=v(T_{\hat{x}}^{-1}x)\in H^1_0(B_R(0)) $. Then $ u $ satisfies \eqref{eq2} with $ a=\hat{q}. $ So $ u=W_{\delta,\hat{q}} $, which implies that $ v(x)=W_{\delta, \hat{q}}(T_{\hat{x}}x) $. Clearly $ V_{\delta, \hat{x}, \hat{q}} $ has an explicit profile
\begin{equation*}
V_{\delta, \hat{x}, \hat{q}}(x)=\begin{cases}
\hat{q}+\delta^{\frac{2}{p-1}}s_\delta^{-\frac{2}{p-1}}\phi\left(\frac{|T_{\hat{x}}x|}{s_\delta}\right),\ \ &|T_{\hat{x}}x|\le s_\delta,\\
\hat{q}\ln\frac{|T_{\hat{x}}x|}{R}/\ln\frac{s_\delta}{R},\ \  &s_\delta\leq |T_{\hat{x}}x|\le R.
\end{cases}
\end{equation*}

For any $ z\in \Omega,  $ define
\begin{equation*}
V_{\delta, \hat{x}, \hat{q}, z}(x):=V_{\delta, \hat{x}, \hat{q}}(x-z),\ \ \forall x\in\Omega.
\end{equation*}

Since $ V_{\delta, \hat{x}, \hat{q}, z} $ is not 0 on $ \partial \Omega $, we need to make a projection of $ V_{\delta, \hat{x}, \hat{q}, z} $ on $ H^1_0(\Omega) $. Let $ PV_{\delta, \hat{x}, \hat{q}, z} $ be a solution of
\begin{equation}\label{eq4}
\begin{cases}
-\delta^2\text{div}(K(\hat{x})\nabla v)=  (V_{\delta, \hat{x}, \hat{q}, z}-\hat{q})^{p}_+,\ \ & \text{in}\  \Omega,\\
v=0,\ \ &\text{on}\ \partial \Omega.
\end{cases}
\end{equation}

We claim that for $ \delta $ sufficiently small,
\begin{equation}\label{202}
PV_{\delta, \hat{x}, \hat{q}, z}(x)=V_{\delta, \hat{x}, \hat{q}, z}(x)-\frac{\hat{q}}{\ln\frac{R}{s_\delta}}g_{\hat{x}}(T_{\hat{x}}x, T_{\hat{x}}z),\ \ \forall x\in\Omega,
\end{equation}
where $ g_{\hat{x}}(x, y)=2\pi h_{\hat{x}}(x, y)+\ln R$ for any $ x,y\in T_{\hat{x}}(\Omega) $, and $ h_{\hat{x}}(x, y) $ is the regular part of Green's function of $ -\Delta $ on $ T_{\hat{x}}(\Omega) $, namely for any $ y\in T_{\hat{x}}(\Omega) $,
\begin{equation}\label{h profile}
\begin{cases}
-\Delta h_{\hat{x}}(x, y)=0, \ \ &x\in T_{\hat{x}}(\Omega),\\
h_{\hat{x}}(x, y)=\frac{1}{2\pi}\ln\frac{1}{|x-y|}, \ \ &x\in \partial T_{\hat{x}}(\Omega).
\end{cases}
\end{equation}
Note that  the Green's function $ G_{\hat{x}}(x, y) $ of $ -\Delta $ in $ T_{\hat{x}}(\Omega) $ with  Dirichlet zero boundary condition    has the decomposition
\begin{equation}\label{G profile}
G_{\hat{x}}(x, y)=\frac{1}{2\pi}\ln\frac{1}{|x-y|}-h_{\hat{x}}(x, y),\ \ \forall x,y\in T_{\hat{x}}(\Omega).
\end{equation}
Indeed, by \eqref{eq3} and \eqref{eq4} one has
\begin{equation*}
\begin{cases}
-\delta^2\text{div}(K(\hat{x})\nabla (V_{\delta, \hat{x}, \hat{q}, z}-PV_{\delta, \hat{x}, \hat{q}, z}))(x)= 0,\ \ &  x\in\Omega,\\
V_{\delta, \hat{x}, \hat{q}, z}-PV_{\delta, \hat{x}, \hat{q}, z}=\hat{q}\ln\frac{|T_{\hat{x}}(x-z)|}{R}/\ln\frac{s_\delta}{R},\ \ &x\in\partial \Omega.
\end{cases}
\end{equation*}
Define $ \check{u}(y)=(V_{\delta, \hat{x}, \hat{q}, z}-PV_{\delta, \hat{x}, \hat{q}, z})(T_{\hat{x}}^{-1}y), y\in T_{\hat{x}}(\Omega)  $. Then
\begin{equation*}
\begin{cases}
-\delta^2\Delta \check{u}(y)= 0,\ \ &  y\in T_{\hat{x}}(\Omega),\\
\check{u}(y)=\hat{q}\ln\frac{|y-T_{\hat{x}}z|}{R}/\ln\frac{s_\delta}{R},\ \ &y\in\partial T_{\hat{x}}(\Omega).
\end{cases}
\end{equation*}
So $ \check{u}(y)=\frac{\hat{q}}{\ln\frac{s_\delta}{R}}(-2\pi h_{\hat{x}}(y, T_{\hat{x}}z)-\ln R) $, which implies that for any $ x\in \Omega $
\begin{equation*}
(V_{\delta, \hat{x}, \hat{q}, z}-PV_{\delta, \hat{x}, \hat{q}, z})(x)=\check{u}(T_{\hat{x}}x)=\frac{\hat{q}}{\ln\frac{s_\delta}{R}}(-2\pi h_{\hat{x}}(T_{\hat{x}}x, T_{\hat{x}}z)-\ln R)=\frac{\hat{q}}{\ln\frac{R}{s_\delta}}g_{\hat{x}}(T_{\hat{x}}x, T_{\hat{x}}z).
\end{equation*}
We get \eqref{202}.

In the following, we will construct solutions of the form
\begin{equation*}
\sum_{j=1}^mPV_{\delta, \hat{x}_j, \hat{q}_j, z_j}+\omega_\delta,
\end{equation*}
where $ \Sigma_{j=1}^mPV_{\delta, \hat{x}_j, \hat{q}_j, z_j} $ is the main term and $ \omega_\delta $ is an error term. To make the  norm of $ \omega_\delta $ as small as possible, we need to choose $ \hat{x}_j$  and $ \hat{q}_j $ suitably close to $ z_j $ and $ q(z_j) $.

Let $ (x_{0,1},\cdots,x_{0,m}) $ be $ m $ distinct strict local maximum points (or minimum points) of $ q^2\sqrt{det(K)} $ in $ \Omega $. Hence we can choose $ \bar{\rho}>0 $ sufficiently small such that
\begin{equation*}
B_{\bar{\rho}}(x_{0,i})\Subset\Omega, \ \overline{B_{\bar{\rho}}(x_{0,i})}\cap\overline{B_{\bar{\rho}}(x_{0,i})}=\varnothing,\  \ \forall 1\leq i\neq j\leq m.
\end{equation*}

Define the admissible set $   \mathcal{M}\subseteq\mathbb{R}^{(2m)}$ satisfying
\begin{equation}\label{admis set}
\mathcal{M}=\{Z=(z_1,z_2,\cdots, z_m)\in\mathbb{R}^{(2m)}\mid z_i\in B_{\bar{\rho}}(x_{0,i}), \ i=1,\cdots,m\}.
\end{equation}

Let
$$ \hat{x}_i=z_i$$
and  $ \hat{q}_i=\hat{q}_{\delta,i}(Z) $, $ i=1,\cdots,m $, be the solution the equations
\begin{equation}\label{q_i choice}
\hat{q}_i=q(z_i)+\frac{\hat{q}_i}{\ln\frac{R}{\varepsilon}}g_{z_i}(T_{z_i}z_i, T_{z_i}z_i)-\Sigma_{j\neq i}\frac{\hat{q}_j}{\ln\frac{R}{\varepsilon}}\bar{G}_{z_j}(T_{z_j}z_i, T_{z_j}z_j),
\end{equation}
where $ \bar{G}_{z_j}(x,y)=\ln\frac{ R}{|x-y|}-g_{z_j}(x,y)=2\pi G_{z_j}(x,y) $ for any $ x,y\in T_{z_j}(\Omega) $.

It follows from Lemma \ref{lemA-3} in Appendix  that for any $ Z $ satisfying \eqref{admis set} and $ \delta $ sufficiently small,  there exist $ \hat{q}_{\delta,i}(Z) $ satisfying \eqref{q_i choice}. Moreover, one has
\begin{equation*}
\hat{q}_i=\frac{q(z_i)-\Sigma_{j\neq i}\frac{\hat{q}_j}{\ln\frac{R}{\varepsilon}}\bar{G}_{z_j}(T_{z_j}z_i, T_{z_j}z_j)}{1-\frac{1}{\ln\frac{R}{\varepsilon}}g_{z_i}(T_{z_i}z_i, T_{z_i}z_i)}.
\end{equation*}

For $ Z=(z_1,\cdots,z_m) $, denote
\begin{equation*}
V_{\delta,Z,j}=PV_{\delta, z_j, \hat{q}_{\delta,j}, z_j},\ \ V_{\delta,Z}=\sum_{j=1}^mV_{\delta,Z,j}.
\end{equation*}
Let $ s_{\delta,j} $ satisfy
\begin{equation*}
\delta^{\frac{2}{p-1}}s_{\delta,j}^{-\frac{2}{p-1}}\phi'(1)=\hat{q}_{\delta,j}/\ln\frac{s_{\delta,j}}{R}.
\end{equation*}
Then  one can easily verify that
\begin{equation}\label{2000}
\frac{1}{\ln\frac{R}{s_{\delta,j}}}=\frac{1}{\ln\frac{R}{\varepsilon}}+O\left( \frac{\ln|\ln\varepsilon|}{|\ln\varepsilon|^2}\right) .
\end{equation}

By the choice of $ \hat{x}_j, \hat{q}_{\delta,j} $, we claim that for any fixed constant $ L>0 $ and $ x\in B_{Ls_{\delta,i}}(z_i) $,
\begin{equation}\label{203}
\begin{split}
V_{\delta,Z}(x)-q(x)=V_{\delta, z_i, \hat{q}_{\delta,i}, z_i}(x)-\hat{q}_{\delta,i}+O\left( \frac{\ln|\ln\varepsilon|}{|\ln\varepsilon|^2}\right).
\end{split}
\end{equation}
Indeed, we have for any $ x\in B_{Ls_{\delta,i}}(z_i) $,
\begin{equation*}
\begin{split}
&V_{\delta,Z,i}(x)-q(x)\\
=&V_{\delta, z_i, \hat{q}_{\delta,i}, z_i}(x)-\frac{\hat{q}_{\delta,i}}{\ln\frac{R}{s_{\delta,i}}}g_{z_i}(T_{z_i}x, T_{z_i}z_i)-q(x)\\
=&V_{\delta, z_i, \hat{q}_{\delta,i}, z_i}(x)-q(z_i)-\frac{\hat{q}_{\delta,i}}{\ln\frac{R}{s_{\delta,i}}}g_{z_i}(T_{z_i}z_i, T_{z_i}z_i)+O(s_{\delta,i})+O\left( \frac{s_{\delta,i}|\nabla g_{z_i}(T_{z_i}z_i, T_{z_i}z_i)|}{\ln\frac{R}{s_{\delta,i}}}\right) \\
=&V_{\delta, z_i, \hat{q}_{\delta,i}, z_i}(x)-q(z_i)-\frac{\hat{q}_{\delta,i}}{\ln\frac{R}{\varepsilon}}g_{z_i}(T_{z_i}z_i, T_{z_i}z_i)+ O\left( \frac{\ln|\ln\varepsilon|}{|\ln\varepsilon|^2}\right),
\end{split}
\end{equation*}
and for any $ j\neq i $, $ x\in B_{Ls_{\delta,i}}(z_i) $,
\begin{equation*}
\begin{split}
V_{\delta,Z,j}(x)=&V_{\delta, z_j, \hat{q}_{\delta,j}, z_j}(x)-\frac{\hat{q}_{\delta,j}}{\ln\frac{R}{s_{\delta,j}}}g_{z_j}(T_{z_j}x, T_{z_j}z_j)\\
=&\frac{\hat{q}_{\delta,j}}{\ln\frac{R}{s_{\delta,j}}}\bar{G}_{z_j}(T_{z_j}x, T_{z_j}z_j)\\
=&\frac{\hat{q}_{\delta,j}}{\ln\frac{R}{s_{\delta,j}}}\bar{G}_{z_j}(T_{z_j}z_i, T_{z_j}z_j)+O\left(\frac{s_{\delta,j}|\nabla \bar{G}_{z_j}(T_{z_j}z_i, T_{z_j}z_j)|}{\ln\frac{R}{s_{\delta,j}}}\right) \\
=&\frac{\hat{q}_{\delta,j}}{\ln\frac{R}{\varepsilon}}\bar{G}_{z_j}(T_{z_j}z_i, T_{z_j}z_j)+ O\left( \frac{\ln|\ln\varepsilon|}{|\ln\varepsilon|^2}\bar{G}_{z_j}(T_{z_j}z_i, T_{z_j}z_j)\right) \\
=&\frac{\hat{q}_{\delta,j}}{\ln\frac{R}{\varepsilon}}\bar{G}_{z_j}(T_{z_j}z_i, T_{z_j}z_j)+ O\left( \frac{\ln|\ln\varepsilon|}{|\ln\varepsilon|^2}\right),
\end{split}
\end{equation*}
where we have used \eqref{2000} and Lemma \ref{lemA-2} in Appendix. Adding up the above inequalities and using \eqref{q_i choice}, we get
\begin{equation*}
\begin{split}
V_{\delta,Z}(x)-q(x)=V_{\delta, z_i, \hat{q}_{\delta,i}, z_i}(x)-\hat{q}_{\delta,i}+O\left( \frac{\ln|\ln\varepsilon|}{|\ln\varepsilon|^2}\right),\ \ \forall x\in B_{Ls_{\delta,i}}(z_i).
\end{split}
\end{equation*}

From \eqref{201}, \eqref{q_i choice} and Lemma \ref{lemA-2}, we get
\begin{equation}\label{200-1}
\frac{\partial \hat{q}_{\delta,i}}{\partial z_{i,h}}=O(1),
\end{equation}
\begin{equation}\label{200-2}
\frac{\partial s_{\delta,i}}{\partial z_{i,h}}=O(\delta|\ln\delta|^{\frac{p-1}{2}}).
\end{equation}
Using the definition of $ V_{\delta, z_i, \hat{q}_{\delta,i}, z_i} $, \eqref{200-1} and \eqref{200-2}, we obtain

\begin{equation}\label{200}
\begin{split}
\frac{\partial V_{\delta, z_i, \hat{q}_{\delta,i}, z_i}(x)}{\partial z_{i,h}}=\begin{cases}
-\frac{1}{s_{\delta,i}}(\frac{\delta}{s_{\delta,i}})^{\frac{2}{p-1}}\phi'(\frac{|T_{z_i}(x-z_i)|}{s_{\delta,i}})\frac{(T_{z_i})_h^t\cdot T_{z_i}(x-z_i)}{|T_{z_i}(x-z_i)|}+O(1),\ \ &|T_{z_i}(x-z_i)|\leq s_{\delta,i},\\
\frac{\hat{q}_{\delta,i}}{\ln\frac{R}{s_{\delta,i}}}\frac{(T_{z_i})_h^t\cdot T_{z_i}(x-z_i)}{|T_{z_i}(x-z_i)|^2}+O\left( \frac{\ln\frac{R}{|T_{z_i}(x-z_i)|}}{\ln\frac{R}{s_{\delta,i}}}\right),\ \ &|T_{z_i}(x-z_i)|> s_{\delta,i},
\end{cases}
\end{split}
\end{equation}
where $ (T_{z_i})_h^t $ is the h-th row of $ (T_{z_i})^t $.
\section{The reduction}
Now we  find solution of \eqref{eq1} being of the form $$ V_{\delta, Z}+\omega_\delta. $$
First we prove that for any $ Z $ satisfying \eqref{admis set}, there exists $ \omega_{\delta,Z} $ such that $ V_{\delta, Z}+\omega_{\delta,Z} $ solves \eqref{eq1} in a co-dimensional $ 2m $ subspace of $ H^1_0 $. In the next section we choose proper $ Z=Z(\delta) $ such that $ V_{\delta, Z}+\omega_\delta $ is a solution.

Let us consider the following equation
\begin{equation}\label{eq5}
-\Delta w=w^p_+,\ \ \text{in}\ \mathbb{R}^2.
\end{equation}
The unique $ C^1 $ solution is
\begin{equation*}
w(x)=\begin{cases}
\phi(x),\ \ &|x|\leq 1,\\
\phi'(1)\ln|x|,\ \ &|x|> 1.
\end{cases}
\end{equation*}
By the classical elliptic equation theory, $ w\in C^{2,\alpha}(\mathbb{R}^2) $ for any $ \alpha\in(0,1) $. The linearized equation of \eqref{eq5} at $ w $ is
\begin{equation}\label{limit eq}
-\Delta v-pw^{p-1}_+v=0, \ \ v\in L^{\infty}(\mathbb{R}^2).
\end{equation}
Clearly, $ \frac{\partial w}{\partial x_h} $ $(h=1,2) $  are solutions of \eqref{limit eq}. It follows from \cite{DY} (see also \cite{CLW}) that
\begin{proposition}[Non-degeneracy]\label{Non-degenerate}
$ w $ is non-degenerate, i.e., the kernel of the linearized equation \eqref{limit eq} is $$ span\{\frac{\partial w}{\partial x_1}, \frac{\partial w}{\partial x_2}\}. $$
\end{proposition}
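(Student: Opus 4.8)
The plan is to exploit the radial symmetry of $w$ and diagonalize the linearized operator by angular Fourier modes. First I would record that $\phi$, and hence $w$, is radially symmetric and radially decreasing (Gidas--Ni--Nirenberg), so that the potential equals $pw_+^{p-1}=p\phi(r)^{p-1}\mathbf{1}_{\{r<1\}}$, a bounded, compactly supported, purely radial function; in particular $w'(r)<0$ on $(0,\infty)$ and the operator $-\Delta-pw_+^{p-1}$ commutes with rotations. By elliptic regularity any $v\in L^\infty$ in the kernel is smooth, so expanding $v(r,\theta)=\sum_{n\ge0}\big(a_n(r)\cos n\theta+b_n(r)\sin n\theta\big)$ and projecting reduces everything to the radial ODEs
\[
L_n v_n:=-v_n''-\tfrac1r v_n'+\tfrac{n^2}{r^2}v_n-p\phi^{p-1}\mathbf{1}_{\{r<1\}}v_n=0 .
\]
Near $r=0$ the two independent solutions behave like $r^{n}$ (regular) and $r^{-n}$ or $\ln r$ (singular), so boundedness selects the one‑dimensional regular branch; near $r=\infty$ the potential vanishes, so bounded solutions must be the decaying branch ($r^{-n}$, or a constant for $n=0$). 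Thus I must show that the regular branch is admissible only for $n=1$, where it is spanned by $w'$.

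For $n=0$ I would use the dilation symmetry: $w_\lambda(x)=\lambda^{2/(p-1)}w(\lambda x)$ solves the same equation, so $\psi_0:=\tfrac{2}{p-1}w+rw'$ lies in the kernel of $L_0$. Since $\psi_0(0)=\tfrac{2}{p-1}\phi(0)\neq0$, it is (up to scaling) the regular radial solution, and as $r\to\infty$ one has $\psi_0=\tfrac{2}{p-1}\phi'(1)\ln r+\phi'(1)\to-\infty$ because $\phi'(1)<0$; hence the regular $n=0$ solution is unbounded and this mode contributes nothing. For $n=1$, differentiating $-\Delta w=w_+^p$ in $x_j$ shows $\partial_{x_j}w$ is in the kernel, so $w'(r)$ is a bounded regular solution of $L_1$; as the regular branch is one‑dimensional and the singular branch ($\sim r^{-1}$) is inadmissible, the bounded $n=1$ profiles are exactly $\{c\,w'\}$, which together with $\cos\theta,\sin\theta$ give precisely $\mathrm{span}\{\partial_{x_1}w,\partial_{x_2}w\}$.

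The heart of the matter is the modes $n\ge2$, ruled out by a sign/Wronskian identity. In Sturm--Liouville form $-(rv_n')'+(\tfrac{n^2}{r}-rV)v_n=0$ with $V=p\phi^{p-1}\mathbf{1}_{\{r<1\}}$, and with $\psi_1:=w'$ solving the $n=1$ equation, the Wronskian $W:=r(\psi_1 v_n'-v_n\psi_1')$ obeys $W'=\tfrac{n^2-1}{r}\psi_1 v_n$. The regular‑at‑origin behaviour ($\psi_1\sim cr$, $v_n\sim c'r^{n}$) and the bounded/decaying behaviour at infinity force $W(0)=W(\infty)=0$, so
\[
\int_0^\infty \frac{n^2-1}{r}\,\psi_1(r)\,v_n(r)\,dr=0 .
\]
To conclude I would show $v_n$ has a fixed sign. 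Passing to Liouville normal form $u=\sqrt r\,v$ turns $L_n$ into $u''+\big(\tfrac{1/4-n^2}{r^2}+V\big)u=0$, whose effective potential strictly decreases in $n$; since the $n=1$ solution $\psi_1<0$ has no zero in $(0,\infty)$, a Wronskian comparison ($w'=(Q^{\mathrm{osc}}_n-Q^{\mathrm{osc}}_1)u_nu_1\le0$, with $W$ vanishing at $0$) forbids a first zero of $u_n$, so $v_n$ is of one sign. Then the integrand above has a fixed sign and the integral vanishes only if $v_n\equiv0$, completing the proof.

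I expect the $n\ge2$ step to be the main obstacle: one must simultaneously justify the vanishing of the Wronskian boundary terms at $0$ and $\infty$ from the regularity and boundedness constraints, and deduce the no‑zeros property via the correct direction of Sturm comparison. Both rely crucially on the strict inequality $w'<0$ on $(0,\infty)$ furnished by the radial monotonicity of $\phi$.
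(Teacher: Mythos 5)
The paper never proves this proposition: it is quoted directly from Dancer--Yan \cite{DY} (see also \cite{CLW}), so there is no in-paper argument to compare against, and your proposal is in effect a reconstruction of the cited proof. It is correct, and it follows the standard route behind that reference: angular Fourier decomposition of a bounded kernel element; elimination of the mode $n=0$ via the dilation field $\psi_0=\tfrac{2}{p-1}w+rw'$, which spans the regular-at-origin branch (since $\psi_0(0)=\tfrac{2}{p-1}\phi(0)\neq 0$) yet grows like $\tfrac{2}{p-1}\phi'(1)\ln r$ at infinity and is therefore unbounded; identification of the mode $n=1$ with $\mathrm{span}\{w'\}$; and exclusion of modes $n\ge 2$ by a Wronskian argument against $\psi_1=w'$, using $w'<0$ on $(0,\infty)$ (Gidas--Ni--Nirenberg in $B_1$ plus Hopf at $r=1$, then $w'=\phi'(1)/r<0$ outside). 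I verified the key identities: with $W=r(\psi_1v_n'-v_n\psi_1')$ one gets $W'=\tfrac{n^2-1}{r}\psi_1v_n$, and $W(0)=W(\infty)=0$ follows from $v_n\sim cr^{n}$ at the origin and $v_n\sim cr^{-n}$ at infinity (for $r>1$ the equation is exactly Euler's, so boundedness forces the decaying branch). Together with the no-first-zero argument this kills all $n\ge2$, and the three mode computations assemble into exactly $\mathrm{span}\{\partial_{x_1}w,\partial_{x_2}w\}$.

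Two cosmetic points, neither a gap. First, the sign in your parenthetical ``$W'=(Q_n^{\mathrm{osc}}-Q_1^{\mathrm{osc}})u_nu_1\le 0$'' is inconsistent with the convention $W=u_1u_n'-u_nu_1'$ you set up: one gets $W'=(Q_1^{\mathrm{osc}}-Q_n^{\mathrm{osc}})u_1u_n=\tfrac{n^2-1}{r^2}u_1u_n$, which is negative only after normalizing $u_n>0$ near the origin and using $u_1<0$; with the signs fixed consistently, the contradiction at a putative first zero $r_0$ (where $W(r_0)=u_1(r_0)u_n'(r_0)>0$ while monotonicity from $W(0)=0$ gives $W(r_0)<0$) goes through exactly as you describe. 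Second, once you know $u_n$ never vanishes you do not need the integral identity as a separate step: $W'$ then has a strict sign on all of $(0,\infty)$, which contradicts $W(0)=W(\infty)=0$ directly.
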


Denote
\begin{equation}\label{204}
F_{\delta,Z}=\{u\in L^p(\Omega)\mid \int_{\Omega}\frac{\partial V_{\delta, Z,j}}{\partial z_{j,h}}u=0,\ \ \forall j=1,\cdots,m,\ h=1,2\},
\end{equation}
and
\begin{equation}\label{205}
E_{\delta,Z}=\{u\in W^{2,p}\cap H^1_0(\Omega)\mid \int_{\Omega}\text{div}(K(x)\nabla \frac{\partial V_{\delta, Z,j}}{\partial z_{j,h}})u=0,\ \ \forall j=1,\cdots,m,\ h=1,2\}.
\end{equation}
So $ F_{\delta,Z}$ and $ E_{\delta,Z} $ are co-dimensional $ 2m $ subspaces of $ L^p $ and $ W^{2,p}\cap H^1_0(\Omega) $, respectively.

For any $ u\in L^p(\Omega) $, define the projection operator $ Q_\delta: L^p \to  F_{\delta,Z} $
\begin{equation}\label{206}
Q_\delta u:=u-\sum_{j=1}^m\sum_{h=1}^2C_{j,h}\frac{\partial}{\partial z_{j,h}}(-\delta^2\text{div}(K(z_j)\nabla   V_{\delta, Z,j})),
\end{equation}
where $ C_{j,h} (j=1,\cdots,m,\ h=1,2) $ satisfies
\begin{equation}\label{207}
\sum_{j=1}^m\sum_{h=1}^2C_{j,h}\int_{\Omega}\frac{\partial}{\partial z_{j,h}}(-\delta^2\text{div}(K(z_j)\nabla   V_{\delta, Z,j}))\frac{\partial V_{\delta, Z,i}}{\partial z_{i,\hbar}}=\int_{\Omega}u \frac{\partial V_{\delta, Z,i}}{\partial z_{i,\hbar}},\ \ \forall i=1,\cdots,m,\ \hbar=1,2.
\end{equation}

By Lemma \ref{lemA-4}, we know that $ Q_\delta $ is a well-defined  linear operator from $ L^p $ to $ F_{\delta,Z}$. Indeed, using \eqref{200} and Lemma \ref{lemA-2},  the coefficient matrix
\begin{equation}\label{coef of C}
\begin{split}
&\int_{\Omega}\frac{\partial}{\partial z_{j,h}}(-\delta^2\text{div}(K(z_j)\nabla   V_{\delta, Z,j}))\frac{\partial V_{\delta, Z,i}}{\partial z_{i,\hbar}}\\
=&p\int_{\Omega}(V_{\delta, z_j, \hat{q}_{\delta,j}, z_j}-\hat{q}_{\delta,j})^{p-1}_+\left( \frac{\partial V_{\delta, z_j, \hat{q}_{\delta,j}, z_j}}{\partial z_{j,h}}-\frac{\partial \hat{q}_{\delta,j}}{\partial z_{j,h}}\right) \frac{\partial V_{\delta, Z,i}}{\partial z_{i,\hbar}}\\
=&p\int_{\Omega}(V_{\delta, z_j, \hat{q}_{\delta,j}, z_j}-\hat{q}_{\delta,j})^{p-1}_+\frac{\partial V_{\delta, z_j, \hat{q}_{\delta,j}, z_j}}{\partial z_{j,h}} \frac{\partial V_{\delta, z_i, \hat{q}_{\delta,i}, z_i}}{\partial z_{i,\hbar}}+O\left( \frac{\varepsilon}{|\ln\varepsilon|^{p}}\right) \\
=&\delta_{i,j}\frac{(M_{i})_{h,\hbar}}{|\ln\varepsilon|^{p+1}}+O\left( \frac{\varepsilon}{|\ln\varepsilon|^{p}}\right) ,
\end{split}
\end{equation}
where $ \delta_{i,j}= 1 $ if $ i = j $; otherwise, $ \delta_{i,j}= 0 $.  $ M_{i} $ are $ m $ positive definite matrices and there exist positive constants $ \bar{c}_1,\bar{c}_2 $ independent of $ \delta, Z $ such that all eigenvalues of $ M_{i} $ belong to $ (\bar{c}_1,\bar{c}_2) $.  So there exists the unique $ C_{j,h} $ satisfying \eqref{207}. Note that for any $ u\in L^p $, $ Q_{\delta} u \equiv u $ in $ \Omega \backslash\cup_{i=1}^mB_{Ls_{\delta,i}}(z_{i}) $ for some $ L>1 $.

The linearized operator of \eqref{eq1} at $ V_{\delta, Z} $ is
\begin{equation*}
L_\delta \omega:=-\delta^2\text{div}(K(x)\nabla \omega)- p(V_{\delta, Z}-q)^{p-1}_+\omega.
\end{equation*}

We have the following estimates of $ L_\delta $.
\begin{lemma}\label{coercive esti}
There exist  $ \rho_0>0, \delta_1>0  $ such that for any $ \delta\in(0,\delta_1], Z$ satisfying \eqref{admis set}, $ u\in E_{\delta,Z} $ satisfying $ Q_\delta L_\delta u=0 $ in $ \Omega\backslash\cup_{j=1}^mB_{Ls_{\delta,j}}(z_j) $ for some $ L>1 $ large, then
\begin{equation*}
||Q_\delta L_\delta u||_{L^p}\geq \frac{\rho_0\varepsilon^{\frac{2}{p}}}{|\ln\varepsilon|^{p-1}}||u||_{L^\infty}.
\end{equation*}
\end{lemma}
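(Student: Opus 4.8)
The plan is to prove Lemma~\ref{coercive esti} by contradiction through a blow-up argument, in which the non-degeneracy of the limiting profile (Proposition~\ref{Non-degenerate}) eliminates the limiting kernel and a maximum principle transports the resulting smallness from the concentration balls to the whole domain. Suppose the estimate fails. Then there exist $\delta_n\to 0$, admissible $Z_n=(z_{1,n},\dots,z_{m,n})$ and $u_n\in E_{\delta_n,Z_n}$ with $Q_{\delta_n}L_{\delta_n}u_n=0$ in $\Omega\setminus\bigcup_j B_{Ls_{\delta_n,j}}(z_{j,n})$, normalized so that $\|u_n\|_{L^\infty}=1$, while $\|Q_{\delta_n}L_{\delta_n}u_n\|_{L^p}=o(\Lambda_n)$ with $\Lambda_n:=\varepsilon_n^{2/p}|\ln\varepsilon_n|^{-(p-1)}$. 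Expanding via \eqref{206}, we have $L_{\delta_n}u_n=f_n+\sum_{j,h}C_{j,h}^{(n)}\frac{\partial}{\partial z_{j,h}}(-\delta_n^2\,\mathrm{div}(K(z_{j,n})\nabla V_{\delta_n,Z_n,j}))$ with $\|f_n\|_{L^p}=o(\Lambda_n)$.

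Around each point I rescale $x=z_{i,n}+s_{\delta_n,i}T_{z_{i,n}}^{-1}y$ and set $\tilde u_{n,i}(y):=u_n(z_{i,n}+s_{\delta_n,i}T_{z_{i,n}}^{-1}y)$. Because $K\in C^\infty$ and $s_{\delta_n,i}\sim\varepsilon_n\to 0$, freezing $K$ at $z_{i,n}$ and using $(T^{-1})(T^{-1})^t=K$ converts $-\delta_n^2\,\mathrm{div}(K(x)\nabla\cdot)$ into $\delta_n^2 s_{\delta_n,i}^{-2}(-\Delta_y)$ up to lower-order terms; since $\delta_n^2 s_{\delta_n,i}^{-2}\sim|\ln\varepsilon_n|^{-(p-1)}$ and the profile of Section~3 (see \eqref{203}) gives $p(V_{\delta_n,Z_n}-q)_+^{p-1}\approx|\ln\varepsilon_n|^{-(p-1)}pw_+^{p-1}(y)$ with $w$ the solution of \eqref{eq5}, the operator $L_{\delta_n}$ on $B_{Ls_{\delta_n,i}}(z_{i,n})$, after dividing by $\Lambda_n$ and changing variables (the Jacobian $s_{\delta_n,i}^2\sqrt{\det K}$ accounts precisely for the factor $\varepsilon_n^{2/p}$), is asymptotic to the limiting operator $-\Delta_y-pw_+^{p-1}$ of \eqref{limit eq}.

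The rescaled equation then reads $(-\Delta_y-pw_+^{p-1})\tilde u_{n,i}=\tilde f_{n,i}+\sum_h \tilde C^{(n)}_{i,h}\,pw_+^{p-1}\partial_{y_h}w+o(1)$, where $\tilde f_{n,i}\to 0$ in $L^p$ and $\tilde C^{(n)}_{i,h}$ are the normalized projection coefficients. With $\|\tilde u_{n,i}\|_{L^\infty}\le 1$, interior $W^{2,p}$ and Schauder estimates give, along a subsequence, $\tilde u_{n,i}\to U_i$ in $C^1_{loc}(\mathbb{R}^2)$ and $\tilde C^{(n)}_{i,h}\to c_{i,h}$. Since $U_i$ is a bounded solution, the Fredholm solvability condition, obtained by testing against the kernel elements $\partial_{y_\hbar}w$ (with the standard decay estimates at infinity) and invoking the positive definiteness of the pairing matrix $M_i$ of \eqref{coef of C}, forces $c_{i,h}=0$; hence $U_i\in L^\infty(\mathbb{R}^2)$ genuinely solves \eqref{limit eq} and, by Proposition~\ref{Non-degenerate}, $U_i\in\mathrm{span}\{\partial_{y_1}w,\partial_{y_2}w\}$. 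On the other hand, the constraint $u_n\in E_{\delta_n,Z_n}$ from \eqref{205}, rewritten via the equation solved by $V_{\delta_n,Z_n,j}$ and the derivative formula \eqref{200} and then rescaled, passes to the limit as $\int_{\mathbb{R}^2}w_+^{p-1}\partial_{y_h}w\,U_i=0$ for $h=1,2$. Because the pairing of $\mathrm{span}\{\partial_{y_1}w,\partial_{y_2}w\}$ with $\{w_+^{p-1}\partial_{y_1}w,w_+^{p-1}\partial_{y_2}w\}$ is exactly $M_i$, invertibility yields $U_i=0$, so $\tilde u_{n,i}\to 0$ locally uniformly, and thus $\|u_n\|_{L^\infty(B_{Ls_{\delta_n,i}}(z_{i,n}))}\to 0$ for every $i$.

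It remains to propagate this to all of $\Omega$. By hypothesis $-\delta_n^2\,\mathrm{div}(K(x)\nabla u_n)=0$ in $\Omega\setminus\bigcup_j B_{Ls_{\delta_n,j}}(z_{j,n})$ (there the nonlinear coefficient vanishes and $Q_{\delta_n}$ acts as the identity), so $u_n$ solves a uniformly elliptic homogeneous equation in the exterior; together with $u_n=0$ on $\partial\Omega$, the maximum principle gives $\|u_n\|_{L^\infty(\Omega)}\le\max_i\|u_n\|_{L^\infty(\partial B_{Ls_{\delta_n,i}}(z_{i,n}))}\to 0$, contradicting $\|u_n\|_{L^\infty}=1$. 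I expect the genuine difficulty to lie in two places: first, making the identification $L_{\delta_n}\approx\Lambda_n(-\Delta-pw_+^{p-1})$ uniform in the admissible $Z_n$ and quantitative enough to upgrade the $C^1_{loc}$ convergence of $\tilde u_{n,i}$ into an honest $L^\infty$-bound over the full balls—here the non-radial, variable matrix $K$, whose frozen error must be controlled at scale $s_{\delta_n,i}$, is the source of most of the estimates; and second, verifying that the projection coefficients $C^{(n)}_{j,h}$, governed by the invertible matrix of size $|\ln\varepsilon_n|^{-(p+1)}$ in \eqref{coef of C}, contribute in the limit only a multiple of the kernel directions $w_+^{p-1}\partial_{y_h}w$ whose coefficients are killed by testing against $\partial_{y_\hbar}w$, so that the limit equation is truly homogeneous.
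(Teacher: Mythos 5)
Your overall strategy is the same as the paper's: argue by contradiction with $\|u_n\|_{L^\infty}=1$, blow up around each $z_{i,n}$, identify the limit as a bounded solution of \eqref{limit eq}, use Proposition~\ref{Non-degenerate} together with the rescaled limit of the constraint \eqref{205} to force that limit to vanish, and finish with the maximum principle in $\Omega\setminus\bigcup_j B_{Ls_{\delta_n,j}}(z_{j,n})$. The one genuinely different step is your treatment of the projection coefficients $C^{(n)}_{j,h}$: the paper removes them \emph{before} blowing up, proving $C_{j,h,N}=O(\varepsilon_N\ln|\ln\varepsilon_N|)$ via the duality identity $\int_\Omega (L_{\delta_N}u_N)\,\frac{\partial V_{\delta_N,Z_N,i}}{\partial z_{i,\hbar}}=\int_\Omega u_N\,L_{\delta_N}\bigl(\frac{\partial V_{\delta_N,Z_N,i}}{\partial z_{i,\hbar}}\bigr)$ and the estimates \eqref{301}--\eqref{304}, so that $L_{\delta_N}u_N$ is $o\bigl(\varepsilon_N^{2/p}|\ln\varepsilon_N|^{-(p-1)}\bigr)$ in $L^p$ and the rescaled equation is asymptotically homogeneous; you instead carry the coefficients into the blow-up limit and kill $c_{i,h}$ by testing against $\partial_{y_\hbar}w$ and invoking positive definiteness of the Gram matrix. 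That variant is legitimate (the integration by parts at infinity is justified because $U_i$ is bounded and harmonic outside the unit ball, so $|\nabla U_i|=O(1/R)$), and it would even spare you the cancellation between $I_1$ and $I_4$ in \eqref{302}: a crude, cancellation-free bound on the coefficients suffices for your route.

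However, as written there is a genuine gap exactly at that point: you assert ``along a subsequence \dots $\tilde C^{(n)}_{i,h}\to c_{i,h}$,'' but compactness presupposes that the rescaled coefficients are bounded, and this is never established. It is not automatic. The $C^{(n)}_{j,h}$ solve the linear system \eqref{207} whose data is $\int_\Omega (L_{\delta_n}u_n)\,\frac{\partial V_{\delta_n,Z_n,i}}{\partial z_{i,\hbar}}$; since the only a priori information is $\|u_n\|_{L^\infty}=1$ (you cannot assume a bound on $\|L_{\delta_n}u_n\|_{L^p}$ --- a lower bound on that quantity is what the lemma asserts), the only way to control this data is to move $L_{\delta_n}$ onto $\partial V_{\delta_n,Z_n,i}/\partial z_{i,\hbar}$ and estimate $\|L_{\delta_n}(\partial V_{\delta_n,Z_n,i}/\partial z_{i,\hbar})\|_{L^1}$, which forces you through the frozen-coefficient decomposition --- precisely the terms $I_1,\dots,I_4$ of \eqref{301} that your proposal defers to ``the genuine difficulty'' without carrying out. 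The gap also propagates backwards: without a coefficient bound, the right-hand side of the rescaled equation is not bounded in $L^p_{loc}$, so the interior $W^{2,p}$/Schauder estimates you invoke to get $\tilde u_{n,i}\to U_i$ in $C^1_{loc}$ cannot be applied either. The gap is fixable in two ways: carry out crude versions of \eqref{302}--\eqref{304} (no cancellation needed) to get $C^{(n)}_{j,h}=O(\varepsilon_n|\ln\varepsilon_n|)$, hence bounded rescaled coefficients; or renormalize by $A_n=\max_{i,h}|\tilde C^{(n)}_{i,h}|$ in case $A_n\to\infty$, note that $\tilde u_{n,i}/A_n\to 0$ while the limiting forcing $\sum_h c_{i,h}\,p\,w_+^{p-1}\partial_{y_h}w$ has a unit coefficient vector, and derive a contradiction from the linear independence of $\partial_{y_1}w,\partial_{y_2}w$ on $B_1(0)$. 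Some such argument must be supplied before the blow-up limit can legitimately be taken.
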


\begin{proof}
We argue by contradiction. Suppose that there are $ \delta_N \to 0 $, $ Z_N=(z_{N,1},\cdots,z_{N,m})\to (z_{1},\cdots,z_{m}) $ satisfying (2.5) and $  u_N \in E_{\delta_N, Z_N} $ with $ Q_{\delta_N} L_{\delta_N} u_N = 0 $ in $ \Omega \backslash\cup_{j=1}^mB_{Ls_{\delta_N,j}}(z_{N,j}) $ for some $ L $ large and $ ||u_N||_{L^\infty}=1 $ such that
\begin{equation*}
||Q_{\delta_N} L_{\delta_N} u_N||_{L^p}\leq \frac{1}{N}\frac{\varepsilon_N^{\frac{2}{p}}}{|\ln\varepsilon_N|^{p-1}}.
\end{equation*}
 Let
\begin{equation}\label{208}
Q_{\delta_N} L_{\delta_N} u_N=  L_{\delta_N} u_N-\sum_{j=1}^m\sum_{h=1}^2C_{j,h,N}\frac{\partial}{\partial z_{j,h}}(-\delta_N^2\text{div}(K(z_{N,j})\nabla   V_{\delta_N, Z_N,j})).
\end{equation}
We now estimate $ C_{j,h,N} $. For fixed $ i=1,\cdots,m, \hbar=1,2 $, multiplying \eqref{208} by $ \frac{\partial V_{\delta_N, Z_N,i}}{\partial z_{i,\hbar}} $ and integrating on $ \Omega $ we get
\begin{equation*}
\begin{split}
&\int_{\Omega}u_NL_{\delta_N}\left( \frac{\partial V_{\delta_N, Z_N,i}}{\partial z_{i,\hbar}}\right) =\int_{\Omega}L_{\delta_N}u_N\frac{\partial V_{\delta_N, Z_N,i}}{\partial z_{i,\hbar}}\\
=&\sum_{j=1}^m\sum_{h=1}^2C_{j,h,N}\int_{\Omega}\frac{\partial}{\partial z_{j,h}}(-\delta_N^2\text{div}(K(z_{N,j})\nabla   V_{\delta_N, Z_N,j}))\frac{\partial V_{\delta_N, Z_N,i}}{\partial z_{i,\hbar}}.
\end{split}
\end{equation*}

We estimate $ \int_{\Omega}u_NL_{\delta_N}\left( \frac{\partial V_{\delta_N, Z_N,i}}{\partial z_{i,\hbar}}\right). $ Note that
\begin{equation}\label{301}
\begin{split}
&\int_{\Omega}u_NL_{\delta_N}\left( \frac{\partial V_{\delta_N, Z_N,i}}{\partial z_{i,\hbar}}\right) \\
=&\int_{\Omega}u_N\left[ -\delta_N^2\text{div}(K(x)\nabla \frac{\partial V_{\delta_N, Z_N,i}}{\partial z_{i,\hbar}})- p(V_{\delta_N, Z_N}-q)^{p-1}_+\frac{\partial V_{\delta_N, Z_N,i}}{\partial z_{i,\hbar}}\right]  \\
=&\int_{\Omega}u_N \frac{\partial }{\partial z_{i,\hbar}}(-\delta_N^2\text{div}(K(z_{N,i})\nabla V_{\delta_N, Z_N,i}))-\int_{\Omega}u_N \left( -\delta_N^2\text{div}\left( \frac{\partial K(z_{N,i})}{\partial z_{i,\hbar}}\nabla V_{\delta_N, Z_N,i}\right) \right) \\
&+\int_{\Omega}u_N \left( -\delta_N^2\text{div}\left( \left( K(x)-K(z_{N,i})\right) \nabla \frac{\partial V_{\delta_N, Z_N,i}}{\partial z_{i,\hbar}}\right) \right) -p\int_{\Omega}u_N(V_{\delta_N, Z_N}-q)^{p-1}_+\frac{\partial V_{\delta_N, Z_N,i}}{\partial z_{i,\hbar}}\\
=:&I_1+I_2+I_3+I_4,
\end{split}
\end{equation}
where the matrix $ \frac{\partial K(z_i)}{\partial z_{i,\hbar}} $ is defined by $ \frac{\partial K(z_i)}{\partial z_{i,\hbar}}= \begin{pmatrix}
\frac{\partial K_{1,1}(z_i)}{\partial z_{i,\hbar}} & \frac{\partial K_{1,2}(z_i)}{\partial z_{i,\hbar}} \\
\frac{\partial K_{2,1}(z_i)}{\partial z_{i,\hbar}} &  \frac{\partial K_{2,2}(z_i)}{\partial z_{i,\hbar}}
\end{pmatrix}. $

By \eqref{203}, \eqref{200-1}, Lemma \ref{lemA-2} and  Lemma \ref{lemA-5}, one has
\begin{equation}\label{302}
\begin{split}
&I_1+I_4\\
=&p\int_{\Omega}u_N\left( V_{\delta_N, z_{N,i}, \hat{q}_{\delta_N,i}, z_{N,i}}-\hat{q}_{\delta_N,i}\right)^{p-1}_+\left( \frac{\partial V_{\delta_N, z_{N,i}, \hat{q}_{\delta_N,i}, z_{N,i}}}{\partial z_{i,h}}-\frac{\partial \hat{q}_{\delta_N,i}}{\partial z_{i,h}}\right) \\
&-p\int_{\Omega}u_N\left( V_{\delta_N, z_{N,i}, \hat{q}_{\delta_N,i}, z_{N,i}}-\hat{q}_{\delta_N,i}+O\left( \frac{\ln|\ln\varepsilon_N|}{|\ln\varepsilon_N|^2}\right) \right)^{p-1}_+\frac{\partial V_{\delta_N, Z_N,i}}{\partial z_{i,\hbar}}\\
=&p\int_{\Omega}u_N\left( V_{\delta_N, z_{N,i}, \hat{q}_{\delta_N,i}, z_{N,i}}-\hat{q}_{\delta_N,i}\right)^{p-1}_+  \frac{\partial V_{\delta_N, z_{N,i}, \hat{q}_{\delta_N,i}, z_{N,i}}}{\partial z_{i,h}}\\
&-p\int_{\Omega}u_N\left( V_{\delta_N, z_{N,i}, \hat{q}_{\delta_N,i}, z_{N,i}}-\hat{q}_{\delta_N,i}+O\left( \frac{\ln|\ln\varepsilon_N|}{|\ln\varepsilon_N|^2}\right) \right)^{p-1}_+\frac{\partial V_{\delta_N, z_{N,i}, \hat{q}_{\delta_N,i}, z_{N,i}}}{\partial z_{i,h}}+O\left( \frac{\varepsilon_N^2}{|\ln\varepsilon_N|^{p-1}}\right)\\
=&O\left( \frac{\varepsilon_N\ln|\ln\varepsilon_N|}{|\ln\varepsilon_N|^{p+1}}\right).
\end{split}
\end{equation}
For $ I_2 $ and $ I_3 $, direct computation shows that
\begin{equation}\label{303}
\begin{split}
I_2=&\int_{\Omega}u_N \left( \delta_N^2\text{div}\left( \frac{\partial K(z_{N,i})}{\partial z_{i,\hbar}}\nabla V_{\delta_N, Z_N,i}\right) \right) \\
=&\int_{\Omega}u_N \left( \delta_N^2\text{div}\left( \frac{\partial K(z_{N,i})}{\partial z_{i,\hbar}}\nabla V_{\delta_N, z_{N,i}, \hat{q}_{\delta_N,i}, z_{N,i}}\right) \right) +O\left( \frac{\delta_N^2}{|\ln\varepsilon_N|}\right)\\
=&\int_{|T_{z_{N,i}}(x-z_{N,i})|\leq s_{\delta_N,i}}+\int_{|T_{z_{N,i}}(x-z_{N,i})|> s_{\delta_N,i}}u_N \left( \delta_N^2\text{div}\left( \frac{\partial K(z_{N,i})}{\partial z_{i,\hbar}}\nabla V_{\delta_N, z_{N,i}, \hat{q}_{\delta_N,i}, z_{N,i}}\right)\right) +O\left( \frac{\delta_N^2}{|\ln\varepsilon_N|}\right)\\
=&O\left( \frac{\delta_N^2}{|\ln\varepsilon_N|}\right) +O(\delta_N^2)+O\left( \frac{\delta_N^2}{|\ln\varepsilon_N|}\right)\\
=&O\left( \frac{\varepsilon_N^2}{|\ln\varepsilon_N|^{p-1}}\right),
\end{split}
\end{equation}
and
\begin{equation}\label{304}
\begin{split}
I_3=&\int_{\Omega}u_N \left( -\delta_N^2\text{div}\left( (K(x)-K(z_{N,i}))\nabla \frac{\partial V_{\delta_N, Z_N,i}}{\partial z_{i,\hbar}}\right)\right) \\
=&\int_{\Omega}u_N \left( -\delta_N^2\text{div}\left( (K(x)-K(z_{N,i}))\nabla \frac{\partial V_{\delta_N, z_{N,i}, \hat{q}_{\delta_N,i}, z_{N,i}}}{\partial z_{i,\hbar}}\right)\right) +O\left( \frac{\delta_N^2}{|\ln\varepsilon_N|}\right)\\
=&\int_{|T_{z_{N,i}}(x-z_{N,i})|\leq s_{\delta_N,i}}+\int_{ s_{\delta_N,i}<|T_{z_{N,i}}(x-z_{N,i})|\leq \mu}+\int_{|T_{z_{N,i}}(x-z_{N,i})|> \mu}\\
&u_N \left( -\delta_N^2\text{div}\left( (K(x)-K(z_{N,i}))\nabla \frac{\partial V_{\delta_N, z_{N,i}, \hat{q}_{\delta_N,i}, z_{N,i}}}{\partial z_{i,\hbar}}\right) \right)+O\left( \frac{\delta_N^2}{|\ln\varepsilon_N|}\right)  \\
=&O\left( \frac{\delta_N^2}{|\ln\varepsilon_N|}\right) +O(\delta_N^2)+O\left( \frac{\delta_N^2}{|\ln\varepsilon_N|}\right)\\
=&O\left( \frac{\varepsilon_N^2}{|\ln\varepsilon_N|^{p-1}}\right).
\end{split}
\end{equation}
Here $ \mu>0 $ is a small constant, and we have used \eqref{200}, Lemmas \ref{lemA-2} and \ref{lemA-5}. Taking \eqref{302}, \eqref{303} and \eqref{304} into \eqref{301}, we get
\begin{equation*}
\begin{split}
&\int_{\Omega}u_NL_{\delta_N}\left( \frac{\partial V_{\delta_N, Z_N,i}}{\partial z_{i,\hbar}}\right) =O\left( \frac{\varepsilon_N\ln|\ln\varepsilon_N|}{|\ln\varepsilon_N|^{p+1}}\right) .
\end{split}
\end{equation*}
Combining with \eqref{coef of C} we get
\begin{equation*}
C_{j,h,N}=O(\varepsilon_N\ln|\ln\varepsilon_N|).
\end{equation*}
Hence
\begin{equation*}
\begin{split}
&\sum_{j=1}^m\sum_{h=1}^2C_{j,h,N}\frac{\partial}{\partial z_{j,h}}(-\delta_N^2\text{div}(K(z_{N,j})\nabla   V_{\delta_N, Z_N,j}))\\
=&p\sum_{j=1}^m\sum_{h=1}^2C_{j,h,N}(V_{\delta_N, z_{N,j}, \hat{q}_{\delta_N,j}, z_{N,j}}-\hat{q}_{\delta_N,j})^{p-1}_+\left( \frac{\partial V_{\delta_N, z_{N,j}, \hat{q}_{\delta_N,j}, z_{N,j}}}{\partial z_{j,h}}-\frac{\partial \hat{q}_{\delta_N,j}}{\partial z_{j,h}}\right) \\
=&O\left( \sum_{j=1}^m\sum_{h=1}^2\frac{\varepsilon_N^{\frac{2}{p}-1}|C_{j,h,N}|}{|\ln\varepsilon_N|^{p}}\right) +O\left( \sum_{j=1}^m\sum_{h=1}^2\frac{\varepsilon_N^{\frac{2}{p}}|C_{j,h,N}|}{|\ln\varepsilon_N|^{p}}\right) \\
=&O\left( \frac{\varepsilon_N^{\frac{2}{p}}\ln|\ln\varepsilon_N|}{|\ln\varepsilon_N|^{p}}\right),\ \ \ \ \text{in}\ L^p(\Omega).
\end{split}
\end{equation*}

So by the assumption and \eqref{208} we have
\begin{equation*}
\begin{split}
L_{\delta_N} u_N=&Q_{\delta_N} L_{\delta_N} u_N+\sum_{j=1}^m\sum_{h=1}^2C_{j,h,N}\frac{\partial}{\partial z_{j,h}}(-\delta_N^2\text{div}(K(z_{N,j})\nabla   V_{\delta_N, Z_N,j}))\\
=&O\left( \frac{1}{N}\frac{\varepsilon_N^{\frac{2}{p}}}{|\ln\varepsilon_N|^{p-1}}\right) +O\left( \frac{\varepsilon_N^{\frac{2}{p}}\ln|\ln\varepsilon_N|}{|\ln\varepsilon_N|^{p}}\right) \\
=&o\left( \frac{\varepsilon_N^{\frac{2}{p}}}{|\ln\varepsilon_N|^{p-1}}\right),\ \ \ \ \text{in}\ L^p(\Omega).
\end{split}
\end{equation*}

For any fixed $ i, $ define $ \tilde{u}_{N,i}(y)=u_N(s_{\delta_N, i}y+z_{N,i}) $ for $ y\in \Omega_{N,i}:=\{y\in\mathbb{R}^2\mid s_{\delta_N, i}y+z_{N,i}\in \Omega\} $.
Define $  $
\begin{equation*}
\tilde{L}_{N,i}u=-\text{div}(K(s_{\delta_N, i}y+z_{N,i})\nabla u)-p\frac{s_{\delta_N,i}^2}{\delta_N^2}(V_{\delta_N, Z_N}(s_{\delta_N, i}y+z_{N,i})-q(s_{\delta_N, i}y+z_{N,i}))^{p-1}_+u.
\end{equation*}
Then direct computation shows that
\begin{equation*}
\begin{split}
||\tilde{L}_{N,i}\tilde{u}_{N,i}||_{L^p(\Omega_{N,i})}
=&\left( \int_{\Omega_{N,i}}\left( \tilde{L}_{N,i}\tilde{u}_{N,i} \right)^pdy\right)^{\frac{1}{p}}  \\
=&\left( \frac{1}{s_{\delta_N, i}^2}\int_{\Omega}\left( -s_{\delta_N, i}^2\text{div}(K(x)\nabla u_N)-p\frac{s_{\delta_N, i}^2}{\delta_N^2}(V_{\delta_N,Z_N}-q)^{p-1}_+u_N \right)^pdx\right)^{\frac{1}{p}}\\
=&\frac{s_{\delta_N, i}^2}{s_{\delta_N, i}^{\frac{2}{p}}\delta_N^2}||L_{\delta_N}u_N||_{L^p(\Omega)},
\end{split}
\end{equation*}
i.e., $ s_{\delta_N, i}^{\frac{2}{p}}\cdot\frac{\delta_N^2}{s_{\delta_N, i}^2}||\tilde{L}_{N,i}\tilde{u}_{N,i}||_{L^p(\Omega_{N,i})}=||L_{\delta_N}u_N||_{L^p(\Omega)}. $

By the fact that $ \frac{\delta_N^2}{s_{\delta_N, i}^2}=O(\frac{1}{|\ln\varepsilon_N|^{p-1}}) $ and $ s_{\delta_N, i}=O(\varepsilon_N) $, we have
\begin{equation*}
\tilde{L}_{N,i}\tilde{u}_{N,i}=o(1)\ \ \ \  \text{in}\ \ L^p(\Omega_{N,i}).
\end{equation*}
Since $ ||\tilde{u}_{N,i}||_{L^\infty(\Omega_{N,i})}=1 $, by the classical regularity theory of  elliptic equations, $ \tilde{u}_{N,i} $ is uniformly bounded in $ W^{2,p}_{loc}(\mathbb{R}^2) $. Hence we may assume that
\begin{equation*}
\tilde{u}_{N,i}\to u_i\ \ \ \ \text{in}\ \ C^1_{loc}(\mathbb{R}^2).
\end{equation*}

We claim that $ u_i\equiv 0. $ On the one hand, by \eqref{203}, the definition of $ V_{\delta_N, z_{N,i}, \hat{q}_{\delta_N,i}, z_{N,i}} $ and the fact that $ z_{N,i}\to z_i $ as $ N\to\infty $, we obtain
\begin{equation*}
\begin{split}
&\frac{s_{\delta_N,i}^2}{\delta_N^2}(V_{\delta_N, Z_N}(s_{\delta_N, i}y+z_{N,i})-q(s_{\delta_N, i}y+z_{N,i}))^{p-1}_+\\
=&\frac{s_{\delta_N,i}^2}{\delta_N^2}\left( V_{\delta_N, z_{N,i}, \hat{q}_{\delta_N,i}, z_{N,i}}(s_{\delta_N, i}y+z_{N,i})-\hat{q}_{\delta_N,i}+O(\frac{\ln|\ln\varepsilon_N|}{|\ln\varepsilon_N|^2})\right)^{p-1}_+ \\
\to&\phi(T_{z_i}y)^{p-1}_+\ \  \ \ \text{in}\ C^0_{loc}(\mathbb{R}^2)\ \text{as}\ N\to\infty.
\end{split}
\end{equation*}
So $ u_i $ satisfies
\begin{equation*}
-\text{div}(K(z_i)\nabla u_i(x))-p\phi(T_{z_i}x)^{p-1}_+u_i(x)=0,\ \ x\in\mathbb{R}^2.
\end{equation*}
Let $ \hat{u}_i(x)=u_i(T_{z_i}^{-1}x) $. Note that $ T_{z_i}^{-1}(T_{z_i}^{-1})^t=K(z_i) $. Then
\begin{equation*}
-\Delta \hat{u}_i(x)=-\text{div}(K(z_i)\nabla u_i)(T_{z_i}^{-1}x)=p\phi(x)^{p-1}_+\hat{u}_i(x),\ \ \ \  \forall\ x\in \mathbb{R}^2.
\end{equation*}
Since $ \hat{u}_i(x)\in L^{\infty}(\mathbb{R}^2) $, it follows from Proposition \ref{Non-degenerate} that there are $ c_1,c_2 $ satisfying
\begin{equation}\label{210}
\hat{u}_i=c_1\frac{\partial \phi}{\partial x_1}+c_2\frac{\partial \phi}{\partial x_2}.
\end{equation}

On the other hand, since $ u_N\in E_{\delta_N,Z_N} $, we have
\begin{equation*}
\int_{\Omega}-\delta_N^2\text{div}\left( K(x)\nabla \frac{\partial V_{\delta_N, Z_N,i}}{\partial z_{i,h}}\right) u_N=0,\ \ \forall h=1,2,
\end{equation*}
which implies that
\begin{equation}\label{209}
\begin{split}
0=&\int_{\Omega}-\delta_N^2\text{div}\left( K(z_{N,i})\nabla \frac{\partial V_{\delta_N, Z_N,i}}{\partial z_{i,h}}\right) u_N+\int_{\Omega}-\delta_N^2\text{div}\left( (K(x)-K(z_{N,i}))\nabla \frac{\partial V_{\delta_N, Z_N,i}}{\partial z_{i,h}}\right) u_N\\
=&\int_{\Omega}\frac{\partial }{\partial z_{i,h}}\left[-\delta_N^2 \text{div}(K(z_{N,i})\nabla V_{\delta_N, Z_N,i})\right] u_N-\int_{\Omega}-\delta_N^2\text{div}\left( \frac{\partial K(z_{N,i})}{\partial z_{i,h}}\nabla V_{\delta_N, Z_N,i}\right) u_N\\
&+\int_{\Omega}-\delta_N^2\text{div}\left( (K(x)-K(z_{N,i}))\nabla \frac{\partial V_{\delta_N, Z_N,i}}{\partial z_{i,h}}\right) u_N.
\end{split}
\end{equation}
By \eqref{303}, we have
\begin{equation}\label{209-1}
\int_{\Omega}-\delta_N^2\text{div}\left( \frac{\partial K(z_{N,i})}{\partial z_{i,h}}\nabla V_{\delta_N, Z_N,i}\right) u_N=O\left( \frac{\varepsilon_N^2}{|\ln\varepsilon_N|^{p-1}}\right).
\end{equation}
By \eqref{304}, we get
\begin{equation}\label{209-2}
\int_{\Omega}-\delta_N^2\text{div}\left( (K(x)-K(z_{N,i}))\nabla \frac{\partial V_{\delta_N, Z_N,i}}{\partial z_{i,h}}\right) u_N=O\left( \frac{\varepsilon_N^2}{|\ln\varepsilon_N|^{p-1}}\right).
\end{equation}
Using the definition of $ V_{\delta_N, Z_N,i} $, \eqref{200-1} and \eqref{200}, we obtain
\begin{equation}\label{209-3}
\begin{split}
&\int_{\Omega}\frac{\partial }{\partial z_{i,h}}\left[-\delta_N^2 \text{div}(K(z_{N,i})\nabla V_{\delta_N, Z_N,i})\right] u_N\\
=&p\int_{\Omega}(V_{\delta_N, z_{N,i}, \hat{q}_{\delta_N,i}, z_{N,i}}-\hat{q}_{\delta_N,i})^{p-1}_+\left( \frac{\partial V_{\delta_N, z_{N,i}, \hat{q}_{\delta_N,i}, z_{N,i}}}{\partial z_{i,h}}-\frac{\partial \hat{q}_{\delta_N,i}}{\partial z_{i,h}}\right) u_N\\
=&p\int_{\Omega}\left( \frac{\delta_N}{s_{\delta_N,i}}\right)^2\phi\left( \frac{T_{z_{N,i}}(x-z_{N,i})}{s_{\delta_N,i}}\right)^{p-1}_+\frac{1}{s_{\delta_N,i}}\left( \frac{\delta_N}{s_{\delta_N,i}}\right)^{\frac{2}{p-1}}\phi'\left( \frac{T_{z_{N,i}}(x-z_{N,i})}{s_{\delta_N,i}}\right)\frac{(T_{z_{N,i}})_h^t\cdot T_{z_{N,i}}(x-z_{N,i})}{|T_{z_{N,i}}(x-z_{N,i})|}u_N\\
&+O\left( \frac{\varepsilon_N^2}{|\ln\varepsilon_N|^{p-1}}\right) \\
=&ps_{\delta_N,i}\left( \frac{\delta_N}{s_{\delta_N,i}}\right)^{\frac{2p}{p-1}}\int_{\mathbb{R}^2}\phi(T_{z_{N,i}}y)^{p-1}_+\phi'(T_{z_{N,i}}y)\frac{(T_{z_{N,i}})_h^t\cdot T_{z_{N,i}}y}{|T_{z_{N,i}}y|}\tilde{u}_{N,i}(y)dy+O\left( \frac{\varepsilon_N^2}{|\ln\varepsilon_N|^{p-1}}\right).
\end{split}
\end{equation}
Hence taking \eqref{209-1}, \eqref{209-2} and \eqref{209-3} into \eqref{209}, we have
\begin{equation}\label{209-0}
\begin{split}
0=&ps_{\delta_N,i}\left( \frac{\delta_N}{s_{\delta_N,i}}\right)^{\frac{2p}{p-1}}\int_{\mathbb{R}^2}\phi(T_{z_{N,i}}y)^{p-1}_+\phi'(T_{z_{N,i}}y)\frac{(T_{z_{N,i}})_h^t\cdot T_{z_{N,i}}y}{|T_{z_{N,i}}y|}\tilde{u}_{N,i}(y)dy+O\left( \frac{\varepsilon_N^2}{|\ln\varepsilon_N|^{p-1}}\right).
\end{split}
\end{equation}
Dividing both sides of \eqref{209-0} into $ ps_{\delta_N,i}(\frac{\delta_N}{s_{\delta_N,i}})^{\frac{2p}{p-1}} $ and passing $ N $ to the limit, we get for   $ h=1,2 $
\begin{equation*}
\begin{split}
0=&\int_{\mathbb{R}^2}\phi(T_{z_{i}}y)^{p-1}_+\phi'(T_{z_{i}}y)\frac{(T_{z_{i}})_h^t\cdot T_{z_{i}}y}{|T_{z_{i}}y|}u_{i}(y)dy\\
=&\int_{\mathbb{R}^2}\phi(x)^{p-1}_+\phi'(x)\frac{(T_{z_{i}})_h^t\cdot x}{|x|}\hat{u}_{i}(x)\sqrt{det(K(z_i))}dx,
\end{split}
\end{equation*}
which implies that
\begin{equation}\label{211}
0=\int_{B_1(0)}\phi^{p-1}_+\frac{\partial \phi}{\partial x_h}\hat{u}_i.
\end{equation}
Combining \eqref{210} with \eqref{211}, we have $ c_1=c_2=0. $ That is, $ u_i\equiv 0. $

So we conclude that $ \tilde{u}_{N,i}\to 0 $ in  $ C^1(B_L(0)) $, which implies that
\begin{equation}\label{212}
||u_N||_{L^\infty(B_{Ls_{\delta_N,i}}(z_{N,i}))}=o(1).
\end{equation}

By the assumption that $ Q_{\delta_N} L_{\delta_N} u_N = 0 $ in $ \Omega \backslash\cup_{i=1}^mB_{Ls_{\delta_N,i}}(z_{N,i}) $ and the definition of $ Q_{\delta_N} $, we have for $ L $ large
\begin{equation*}
 L_{\delta_N} u_N = 0\  \ \text{in} \ \Omega \backslash\cup_{i=1}^mB_{Ls_{\delta_N,i}}(z_{N,i}).
\end{equation*}
It follows from Lemma \ref{lemA-5} that
\begin{equation*}
(V_{\delta_N,Z_N}-q)_+=0\  \ \text{in} \ \Omega \backslash\cup_{i=1}^mB_{Ls_{\delta_N,i}}(z_{N,i}).
\end{equation*}
So we get $ -\text{div}(K(x)\nabla u_N)=0 $ in $ \Omega \backslash\cup_{i=1}^mB_{Ls_{\delta_N,i}}(z_{N,i}). $

Since $ u_N=o(1) $ on $ \cup_{i=1}^m\partial B_{Ls_{\delta_N,i}}(z_{N,i}) $ and $ u_N=0 $ on $ \partial \Omega $, by the maximum principle, we get
\begin{equation*}
||u_N||_{L^\infty(\Omega\backslash \cup_{i=1}^mB_{Ls_{\delta_N,i}}(z_{N,i}))}=o(1),
\end{equation*}
which combined with \eqref{212} we have
\begin{equation*}
||u_N||_{L^\infty(\Omega)}=o(1).
\end{equation*}
This is a contradiction since $ ||u_N||_{L^\infty(\Omega)}=1. $

\end{proof}

Then we can get
\begin{proposition}\label{one to one and onto}
$ Q_\delta L_\delta $ is  a one to one and onto map from $ E_{\delta,Z} $ to $ F_{\delta, Z}. $
\end{proposition}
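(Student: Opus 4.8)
The plan is to prove injectivity and surjectivity separately, drawing injectivity directly from Lemma~\ref{coercive esti} and obtaining surjectivity from a Fredholm index-zero argument, after which injectivity upgrades automatically to bijectivity.

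For injectivity, suppose $u\in E_{\delta,Z}$ satisfies $Q_\delta L_\delta u=0$. Then \emph{a fortiori} $Q_\delta L_\delta u=0$ on $\Omega\setminus\cup_{j=1}^m B_{Ls_{\delta,j}}(z_j)$, so the hypotheses of Lemma~\ref{coercive esti} are met and
\[
0=\|Q_\delta L_\delta u\|_{L^p}\ge \frac{\rho_0\varepsilon^{2/p}}{|\ln\varepsilon|^{p-1}}\,\|u\|_{L^\infty},
\]
which forces $\|u\|_{L^\infty}=0$, i.e.\ $u\equiv 0$. Hence $Q_\delta L_\delta$ is injective on $E_{\delta,Z}$ whenever $\delta\le\delta_1$.

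For surjectivity, I would show that $Q_\delta L_\delta\colon E_{\delta,Z}\to F_{\delta,Z}$ is Fredholm of index zero. Write $X=W^{2,p}\cap H^1_0(\Omega)$, $Y=L^p(\Omega)$ and split
\[
L_\delta=A_\delta-B_\delta,\qquad A_\delta\omega=-\delta^2\text{div}(K(x)\nabla\omega),\quad B_\delta\omega=p(V_{\delta,Z}-q)^{p-1}_+\omega .
\]
For fixed $\delta$, $A_\delta\colon X\to Y$ is an isomorphism by the $L^p$ theory for uniformly elliptic divergence-form operators with smooth coefficients (assumptions $(\mathcal{K}1)$–$(\mathcal{K}2)$), while $B_\delta$ is compact from $X$ to $Y$ because in dimension two the embedding $W^{2,p}\hookrightarrow C^0(\overline\Omega)$ is compact and the coefficient $p(V_{\delta,Z}-q)^{p-1}_+$ is bounded. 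Thus $L_\delta=A_\delta(I-A_\delta^{-1}B_\delta)$ is Fredholm of index zero from $X$ to $Y$. Now $Q_\delta L_\delta|_{E_{\delta,Z}}$ is the composition $Q_\delta\circ L_\delta\circ\iota$, where $\iota\colon E_{\delta,Z}\hookrightarrow X$ is the inclusion of a closed codimension-$2m$ subspace (index $-2m$) and $Q_\delta\colon Y\to F_{\delta,Z}$ is the projection of \eqref{206}, which is surjective with $2m$-dimensional kernel (index $+2m$). By additivity of the Fredholm index under composition, $Q_\delta L_\delta$ has index $-2m+0+2m=0$. Combined with injectivity this gives $\dim\mathrm{coker}=\dim\ker=0$, so $Q_\delta L_\delta$ is onto, hence bijective.

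The isomorphism property of $A_\delta$ and the compact Sobolev embedding are routine; the point deserving care is the index bookkeeping, namely checking that the $2m$ functionals defining $E_{\delta,Z}$ in \eqref{205} and those defining $F_{\delta,Z}$ in \eqref{204} are each linearly independent for small $\delta$, so that both subspaces have \emph{exactly} codimension $2m$ and $\iota$, $Q_\delta$ carry the claimed indices. This independence is exactly the nondegeneracy of the block-diagonal coupling matrix in \eqref{coef of C}, whose eigenvalues stay bounded away from zero; that same estimate already guaranteed $Q_\delta$ is well defined. Since the genuine analytic difficulty of the reduction has been absorbed into Lemma~\ref{coercive esti}, the proposition itself amounts to this soft functional-analytic packaging.
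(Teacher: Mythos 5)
Your proposal is correct in its conclusion and its injectivity step coincides with the paper's: both feed $Q_\delta L_\delta u=0$ into Lemma \ref{coercive esti} and conclude $u\equiv 0$. For surjectivity, however, you take a genuinely different route. The paper never leaves the subspaces: using the Riesz representation theorem and $L^p$ regularity it shows that the principal part $-\delta^2\text{div}(K(x)\nabla\cdot)=Q_\delta(-\delta^2\text{div}(K(x)\nabla\cdot))$ is already a bijection from $E_{\delta,Z}$ onto $F_{\delta,Z}$ (the key point being that when $\hat{h}\in F_{\delta,Z}$, the weak solution automatically satisfies the constraints defining $E_{\delta,Z}$), then rewrites $Q_\delta L_\delta u=h$ as a compact perturbation of the identity inside $E_{\delta,Z}$ and invokes the Fredholm alternative, with the trivial-kernel hypothesis supplied by injectivity. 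You instead work on the full spaces $X=W^{2,p}\cap H^1_0$ and $Y=L^p$, establish that $L_\delta$ is Fredholm of index zero there, and transfer the index through the inclusion $\iota$ and the projection $Q_\delta$ by additivity. Both arguments rest on the same two analytic inputs (global invertibility of the divergence-form operator in the $L^p$ scale, and compactness of the zeroth-order term), but the paper's version buys complete freedom from codimension bookkeeping—it never needs to know that the codimensions are exactly $2m$—whereas yours stands or falls with the exact counts $\dim\ker Q_\delta=\mathrm{codim}\,E_{\delta,Z}=2m$.

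That is the one place where your write-up needs repair: the independence of the functionals defining $E_{\delta,Z}$ in \eqref{205} is \emph{not} "exactly" the nondegeneracy of the matrix in \eqref{coef of C}. That matrix pairs the frozen-coefficient functions $\frac{\partial}{\partial z_{j,h}}\left(-\delta^2\text{div}(K(z_j)\nabla V_{\delta,Z,j})\right)$ against $\frac{\partial V_{\delta,Z,i}}{\partial z_{i,\hbar}}$, while $E_{\delta,Z}$ is cut out by $\text{div}\left(K(x)\nabla \frac{\partial V_{\delta,Z,j}}{\partial z_{j,h}}\right)$, with variable coefficient and the $z$-derivative inside; the two differ by the commutator terms involving $\frac{\partial K(z_j)}{\partial z_{j,h}}$ and $K(x)-K(z_j)$, which are controlled only through the additional estimates \eqref{209-1}--\eqref{209-2}. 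You can close this gap in two ways: either import those estimates to verify the independence directly, or note that your argument really needs only $\dim\ker Q_\delta=2m$ (which \emph{does} follow from \eqref{coef of C}): injectivity forces the index $2m-\mathrm{codim}\,E_{\delta,Z}$ to be nonpositive, so $\mathrm{codim}\,E_{\delta,Z}\geq 2m$, and the reverse inequality is trivial since $E_{\delta,Z}$ is defined by $2m$ functionals; hence the index is zero and surjectivity follows. With that adjustment your proof is complete.
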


\begin{proof}
If $ Q_\delta L_\delta u=0 $, by Lemma \ref{coercive esti}, $ u=0 $. So $ Q_\delta L_\delta $ is  a one to one  map from $ E_{\delta,Z} $ to $ F_{\delta, Z}. $

We prove that $ Q_\delta L_\delta $ is  an onto  map from $ E_{\delta,Z} $ to $ F_{\delta, Z}. $ Denote
\begin{equation*}
\hat{E}=\{u\in H^1_0(\Omega)\mid \int_{\Omega}\left( K(x)\nabla u|\nabla\frac{\partial V_{\delta,Z,i}}{\partial z_{i,h}}\right) =0,\ \ \ \ i=1,\cdots,m,\ h=1,2\}.
\end{equation*}
Then $ E_{\delta,Z}=\hat{E}\cap W^{2,p}(\Omega) $. For any $ \hat{h}\in F_{\delta,Z}  $, by the Riesz representation theorem there is a unique $ u\in H^1_0(\Omega) $ such that
\begin{equation}\label{213}
\delta^2\int_{\Omega}\left( K(x)\nabla u|\nabla\varphi\right) =\int_{\Omega}\hat{h}\varphi,\ \ \   \ \forall \varphi\in H^1_0(\Omega).
\end{equation}

Since $ \hat{h}\in F_{\delta,Z} $, we have $ u\in \hat{E}. $ Using the classical $ L^p $ theory, we conclude that $ u\in W^{2,p}(\Omega) $, which implies that $ u\in E_{\delta,Z}. $ Thus $ -\delta^2\text{div}(K(x)\nabla)=Q_{\delta}(-\delta^2\text{div}(K(x)\nabla)) $ is a one to one and onto map from $ E_{\delta,Z} $ to $ F_{\delta, Z}. $

For any $ h\in F_{\delta,Z} $,  $ Q_\delta L_\delta u=h $ is equivalent to
\begin{equation}\label{214}
u=(Q_{\delta}(-\delta^2\text{div}(K(x)\nabla)))^{-1}pQ_\delta(V_{\delta,Z}-q)^{p-1}_+u+(Q_{\delta}(-\delta^2\text{div}(K(x)\nabla)))^{-1}h,\ \ u\in E_{\delta,Z}.
\end{equation}
Note that $\mathcal{T}u:= (Q_{\delta}(-\delta^2\text{div}(K(x)\nabla)))^{-1}pQ_\delta(V_{\delta,Z}-q)^{p-1}_+u $ is a compact operator in $ E_{\delta,Z}. $ By the Fredholm alternative, \eqref{214} is solvable if and only if
\begin{equation*}
u=(Q_{\delta}(-\delta^2\text{div}(K(x)\nabla)))^{-1}pQ_\delta(V_{\delta,Z}-q)^{p-1}_+u
\end{equation*}
has only trivial solution, which is true since $ Q_\delta L_\delta  $ is one to one. The proof is thus complete.
\end{proof}

Now  consider solutions of \eqref{eq1} being the form of $ V_{\delta,Z}+\omega_\delta $. Note that by \eqref{eq1}, one has
\begin{equation*}
-\delta^2\text{div}(K(x)\nabla(V_{\delta,Z}+\omega_\delta))-(V_{\delta,Z}+\omega_\delta-q)^p_+=0,
\end{equation*}
which is equivalent to
\begin{equation}\label{215}
\begin{split}
L_\delta \omega_\delta=l_{1,\delta}+l_{2,\delta}+R_\delta(\omega_\delta),
\end{split}
\end{equation}
where
\begin{equation*}
l_{1,\delta}=(V_{\delta,Z}-q)^p_+-\sum_{j=1}^m(V_{\delta,z_j,\hat{q}_{\delta,j}, z_j}-\hat{q}_{\delta,j})^p_+,
\end{equation*}
\begin{equation*}
l_{2,\delta}=\delta^2\sum_{j=1}^m\text{div}((K(x)-K(z_j))\nabla V_{\delta,Z,j}),
\end{equation*}
\begin{equation*}
R_\delta(\omega_\delta)=(V_{\delta,Z}+\omega_\delta-q)^p_+-(V_{\delta,Z}-q)^p_+-p(V_{\delta,Z}-q)^{p-1}_+\omega_\delta.
\end{equation*}

We first solve the existence and uniqueness of $ \omega\in E_{\delta,Z}  $ satisfying
\begin{equation}\label{216}
Q_\delta L_\delta \omega=Q_\delta l_{1,\delta}+Q_\delta l_{2,\delta}+Q_\delta R_\delta(\omega),
\end{equation}
or equivalently,
\begin{equation*}
\begin{split}
\omega=T_\delta(\omega):=(Q_\delta L_\delta)^{-1}Q_\delta l_{1,\delta}+(Q_\delta L_\delta)^{-1}Q_\delta l_{2,\delta}+(Q_\delta L_\delta)^{-1}Q_\delta R_\delta(\omega).
\end{split}
\end{equation*}
And then we can reduce \eqref{215} to a finite dimensional problem.

\begin{proposition}\label{exist and uniq of w}
There is $ \delta_0>0, $ such that for any $ 0<\delta<\delta_0 $ and $ Z $ satisfying \eqref{admis set}, \eqref{216} has the unique solution $ \omega_{\delta,Z}\in E_{\delta,Z} $ with
\begin{equation*}
||\omega_{\delta,Z}||_{L^\infty(\Omega)}=O\left( \frac{\ln|\ln\varepsilon|}{|\ln\varepsilon|^2}\right).
\end{equation*}
\end{proposition}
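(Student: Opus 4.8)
The plan is to solve the projected equation \eqref{216} by a contraction mapping argument, viewing it as the fixed point problem $\omega = T_\delta(\omega)$ with
\[
T_\delta(\omega)=(Q_\delta L_\delta)^{-1}Q_\delta l_{1,\delta}+(Q_\delta L_\delta)^{-1}Q_\delta l_{2,\delta}+(Q_\delta L_\delta)^{-1}Q_\delta R_\delta(\omega),
\]
which is well posed since Proposition \ref{one to one and onto} guarantees that $Q_\delta L_\delta:E_{\delta,Z}\to F_{\delta,Z}$ is invertible. The decisive quantitative input is Lemma \ref{coercive esti}: for $h\in F_{\delta,Z}$ vanishing outside $\cup_j B_{Ls_{\delta,j}}(z_j)$, the solution $u=(Q_\delta L_\delta)^{-1}h$ obeys $\|u\|_{L^\infty}\le \rho_0^{-1}\,\frac{|\ln\varepsilon|^{p-1}}{\varepsilon^{2/p}}\|h\|_{L^p}$. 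I would work in the closed ball $\mathcal{B}=\{\omega\in E_{\delta,Z}:\|\omega\|_{L^\infty}\le M\frac{\ln|\ln\varepsilon|}{|\ln\varepsilon|^2}\}$ for a large fixed $M$, and show that $T_\delta$ maps $\mathcal{B}$ into itself and is a contraction there. A preliminary step is to record that $Q_\delta$ is bounded on $L^p$ uniformly in $\delta,Z$, which follows from \eqref{206}--\eqref{207} together with the nondegeneracy of the coefficient matrix in \eqref{coef of C}.

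First I would estimate the two inhomogeneous terms. Since $l_{1,\delta}$ is a difference of positive parts, it is supported in $\cup_j B_{Ls_{\delta,j}}(z_j)$; on each ball the expansion \eqref{203} gives $V_{\delta,Z}-q=(V_{\delta,z_i,\hat q_{\delta,i},z_i}-\hat q_{\delta,i})+O(\frac{\ln|\ln\varepsilon|}{|\ln\varepsilon|^2})$, so the mean value theorem applied to $t\mapsto t^p_+$ yields the pointwise bound $|l_{1,\delta}|\le C\,(V_{\delta,z_i,\hat q_{\delta,i},z_i}-\hat q_{\delta,i})^{p-1}_+\frac{\ln|\ln\varepsilon|}{|\ln\varepsilon|^2}$. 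Integrating against the known profile height $\sim|\ln\varepsilon|^{-1}$ over a ball of area $\sim s_\delta^2$ gives $\|Q_\delta l_{1,\delta}\|_{L^p}\le C\frac{\varepsilon^{2/p}}{|\ln\varepsilon|^{p-1}}\frac{\ln|\ln\varepsilon|}{|\ln\varepsilon|^2}$, whence by Lemma \ref{coercive esti} the first term of $T_\delta$ has $L^\infty$ norm $O(\frac{\ln|\ln\varepsilon|}{|\ln\varepsilon|^2})$. The term $l_{2,\delta}=\delta^2\sum_j\text{div}((K(x)-K(z_j))\nabla V_{\delta,Z,j})$ carries an explicit factor $\delta^2$, which is the analytic reason it is harmless: the Lipschitz bound $|K(x)-K(z_j)|\le C|x-z_j|$ combined with the gradient profile of $V_{\delta,Z,j}$ shows its contribution is of the same or smaller order. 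Because $l_{2,\delta}$ is not supported in the balls, I would estimate its contribution by splitting off the harmonic tail and combining the coercive bound on the balls with the maximum principle / Riesz representation bound used in Proposition \ref{one to one and onto} on the complement; the prefactor $\delta^2$ cancels the $\delta^{-2}$ loss of the free operator, keeping everything at order $O(\frac{\ln|\ln\varepsilon|}{|\ln\varepsilon|^2})$.

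For the contraction I would use that $R_\delta(\omega)=(V_{\delta,Z}+\omega-q)^p_+-(V_{\delta,Z}-q)^p_+-p(V_{\delta,Z}-q)^{p-1}_+\omega$ is the second-order Taylor remainder of the nonlinearity, hence supported in the balls and superlinearly small: for $\omega_1,\omega_2\in\mathcal{B}$ one estimates $\|R_\delta(\omega_1)-R_\delta(\omega_2)\|_{L^p}\le o(1)\cdot\frac{\varepsilon^{2/p}}{|\ln\varepsilon|^{p-1}}\|\omega_1-\omega_2\|_{L^\infty}$, the smallness coming from $\|\omega_i\|_{L^\infty}$ being negligible relative to the profile height. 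Applying Lemma \ref{coercive esti} then renders the third term of $T_\delta$ a genuine contraction for $\delta$ small, and combined with the first two estimates it gives $T_\delta(\mathcal{B})\subseteq\mathcal{B}$. The Banach fixed point theorem then produces a unique $\omega_{\delta,Z}\in\mathcal{B}$ solving \eqref{216} with the stated bound $\|\omega_{\delta,Z}\|_{L^\infty}=O(\frac{\ln|\ln\varepsilon|}{|\ln\varepsilon|^2})$.

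The main obstacle is the interplay of two features. First, the inverse $(Q_\delta L_\delta)^{-1}$ carries the large factor $|\ln\varepsilon|^{p-1}/\varepsilon^{2/p}$, so each right-hand side must be estimated in $L^p$ with matching precision; there is essentially no slack, and every power of $|\ln\varepsilon|$ must be tracked exactly. Second, for $1<p<2$ the nonlinearity $(\cdot)^p_+$ is only $C^{1,p-1}$, so the remainder $R_\delta$ must be controlled by Hölder-type rather than quadratic estimates, and the difference $R_\delta(\omega_1)-R_\delta(\omega_2)$ requires care near the free boundary $\{V_{\delta,Z}=q\}$ where the profile vanishes. Handling the non-compactly-supported term $l_{2,\delta}$ outside the supported-in-balls hypothesis of Lemma \ref{coercive esti} is the other delicate point, which I resolve by exploiting its intrinsic $\delta^2$ prefactor.
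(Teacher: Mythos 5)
Your proposal follows essentially the same route as the paper's own proof: the identical fixed-point operator $T_\delta$, the same key quantitative input (Lemma \ref{coercive esti} applied to right-hand sides vanishing outside the concentration balls, which is legitimate for $l_{1,\delta}$ and $R_\delta(\omega)$ by Lemma \ref{lemA-5}), and the same order-by-order estimates for $l_{1,\delta}$, $l_{2,\delta}$ and the Taylor remainder, the only structural difference being that you contract directly in the sharp ball of radius $M\,\ln|\ln\varepsilon|/|\ln\varepsilon|^{2}$, whereas the paper contracts in the slightly larger ball of radius $|\ln\varepsilon|^{-(2-\theta_0)}$ and then recovers the sharp bound by re-inserting the fixed point into estimate \eqref{217} — both variants work with the same ingredient bounds. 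Your explicit caveats (the H\"older-type control of $R_\delta$ when $1<p<2$, and the fact that $l_{2,\delta}$ is not supported in the balls so Lemma \ref{coercive esti} cannot be applied to it verbatim) address points the paper treats tersely, but they do not alter the architecture of the argument.
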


\begin{proof}
It follows from  Lemma \ref{lemA-5} in Appendix that for $ L $ sufficiently large and $ \delta $ small,
\begin{equation*}
(V_{\delta,Z}-q)_+=0,\ \ \ \ \text{in}\ \Omega \backslash\cup_{i=1}^mB_{Ls_{\delta,i}}(z_{i}).
\end{equation*}
Let $ N= E_{\delta,Z} \cap\{\omega\mid ||\omega||_{L^\infty(\Omega)}\leq \frac{1}{|\ln\varepsilon|^{2-\theta_0}}\}$ for some $ \theta_0\in(0,1). $ Then $ N $ is complete under $ L^\infty $ norm and $ T_\delta $ is a map from $ E_{\delta,Z} $ to $ E_{\delta,Z} $. We now prove that $ T_\delta $ is a contraction map from $ N $ to $ N $.

First, we claim that $ T_\delta $ is a map from $ N $ to $ N $. For any $ \omega\in N $, by Lemma \ref{lemA-5} we get that for $ L>1 $ large and  $ \delta $ small,
\begin{equation*}
(V_{\delta,Z}+\omega-q)_+=0,\ \ \ \ \text{in}\ \Omega \backslash\cup_{i=1}^mB_{Ls_{\delta,i}}(z_{i}).
\end{equation*}
So $ l_{1,\delta}=R_\delta(\omega)=0 $ in $ \Omega \backslash\cup_{i=1}^mB_{Ls_{\delta,i}}(z_{i}). $ Note that for any $ u\in L^\infty(\Omega) $,
\begin{equation*}
Q_\delta u=u,\ \ \ \ \text{in}\ \Omega \backslash\cup_{i=1}^mB_{Ls_{\delta,i}}(z_{i}).
\end{equation*}
Hence
\begin{equation*}
Q_\delta l_{1,\delta}+Q_\delta R_\delta(\omega)=0,\ \ \ \ \text{in}\ \Omega \backslash\cup_{i=1}^mB_{Ls_{\delta,i}}(z_{i}).
\end{equation*}
So, we can apply Lemma \ref{coercive esti} to obtain
\begin{equation*}
||(Q_\delta L_\delta)^{-1}(Q_\delta l_{1,\delta}+Q_\delta R_\delta(\omega))||_{L^\infty}\leq C\frac{|\ln\varepsilon|^{p-1}}{\varepsilon^{\frac{2}{p}}}||Q_\delta l_{1,\delta}+Q_\delta R_\delta(\omega)||_{L^p}.
\end{equation*}
By Lemma \ref{lemA-4}, we know that
\begin{equation*}
||Q_\delta l_{1,\delta}+Q_\delta R_\delta(\omega)||_{L^p}\leq C(|| l_{1,\delta}||_{L^p}+ ||R_\delta(\omega)||_{L^p}).
\end{equation*}
It follows from \eqref{203}, the definition of $ l_{1,\delta}, R_\delta(\omega) $ and Lemma \ref{lemA-5} that
\begin{equation*}
\begin{split}
||l_{1,\delta}||_{L^p}=&||(V_{\delta,Z}-q)^p_+-\sum_{j=1}^m(V_{\delta,z_j,\hat{q}_{\delta,j}, z_j}-\hat{q}_{\delta,j})^p_+||_{L^p}\\
\leq& C\frac{\ln|\ln\varepsilon|}{|\ln\varepsilon|^2}\sum_{j=1}^m||(V_{\delta,z_j,\hat{q}_{\delta,j}, z_j}-\hat{q}_{\delta,j})^{p-1}_+||_{L^p}\\
\leq&C\frac{\ln|\ln\varepsilon|}{|\ln\varepsilon|^2}\sum_{j=1}^m\left( \frac{\delta}{s_{\delta,j}}\right)^2s_{\delta,j}^{\frac{2}{p}}\\
\leq&C\frac{\varepsilon^{\frac{2}{p}}\ln|\ln\varepsilon|}{|\ln\varepsilon|^{p+1}},
\end{split}
\end{equation*}
and
\begin{equation*}
\begin{split}
||R_\delta(\omega)||_{L^p}=&||(V_{\delta,Z}+\omega-q)^p_+-(V_{\delta,Z}-q)^p_+-p(V_{\delta,Z}-q)^{p-1}_+\omega||_{L^p}\\
\leq &C||(V_{\delta,Z}-q)^{p-2}_+||_{L^p}||\omega||_{L^\infty}^2\\
\leq &C\frac{\varepsilon^{\frac{2}{p}}}{|\ln\varepsilon|^{p-2}}||\omega||_{L^\infty}^2.
\end{split}
\end{equation*}
So 
\begin{equation*}
\begin{split}
||(Q_\delta L_\delta)^{-1}(Q_\delta l_{1,\delta}+Q_\delta R_\delta(\omega))||_{L^\infty}\leq&C\varepsilon^{-\frac{2}{p}}|\ln\varepsilon|^{p-1}(|| l_{1,\delta}||_{L^p}+ ||R_\delta(\omega)||_{L^p})\\
\leq&C\varepsilon^{-\frac{2}{p}}|\ln\varepsilon|^{p-1}\left( \frac{\varepsilon^{\frac{2}{p}}\ln|\ln\varepsilon|}{|\ln\varepsilon|^{p+1}}+\frac{\varepsilon^{\frac{2}{p}}}{|\ln\varepsilon|^{p-2}}||\omega||_{L^\infty}^2\right).
\end{split}
\end{equation*}
By the definition of $ l_{2,\delta} $, one computes directly that  
$$ ||(Q_\delta L_\delta)^{-1}Q_\delta l_{2,\delta}||_{L^\infty}\leq C ||Q_\delta l_{2,\delta}||_{L^\infty}\leq \frac{C}{|\ln\delta|^2}.$$
Hence by the definition of $ N $,
\begin{equation}\label{217}
\begin{split}
||T_\delta(\omega)||_{L^\infty}\leq& C\varepsilon^{-\frac{2}{p}}|\ln\varepsilon|^{p-1}(|| l_\delta||_{L^p}+ ||R_\delta(\omega)||_{L^p})+\frac{C}{|\ln\delta|^2}\\
\leq&C\varepsilon^{-\frac{2}{p}}|\ln\varepsilon|^{p-1}\left( \frac{\varepsilon^{\frac{2}{p}}\ln|\ln\varepsilon|}{|\ln\varepsilon|^{p+1}}+\frac{\varepsilon^{\frac{2}{p}}}{|\ln\varepsilon|^{p-2}}||\omega||_{L^\infty}^2\right)+\frac{C}{|\ln\delta|^2} \\
\leq &\frac{1}{|\ln\varepsilon|^{2-\theta_0}}.
\end{split}
\end{equation}
So $ T_\delta $ is a map from $ N $ to $ N $.

Then we prove that  $ T_\delta $ is a contraction map. For any $ \omega_1,\omega_2\in N $,
\begin{equation*}
T_\delta(\omega_1)-T_\delta(\omega_2)=(Q_\delta L_\delta)^{-1}Q_\delta(R_\delta(\omega_1)-R_\delta(\omega_2)).
\end{equation*}
Note that $ R_\delta(\omega_1)=R_\delta(\omega_2)=0 $ in $ \Omega \backslash\cup_{i=1}^mB_{Ls_{\delta,i}}(z_{i}). $ By Lemma \ref{coercive esti} and the definition of $ N $, for $ \delta $ sufficiently small
\begin{equation*}
\begin{split}
||T_\delta(\omega_1)-T_\delta(\omega_2)||_{L^\infty}\leq &C\varepsilon^{-\frac{2}{p}}|\ln\varepsilon|^{p-1}||R_\delta(\omega_1)-R_\delta(\omega_2)||_{L^p}\\
\leq &C\varepsilon^{-\frac{2}{p}}|\ln\varepsilon|^{p-1}\varepsilon^{\frac{2}{p}}\left(\frac{||\omega_1||_{L^\infty}+||\omega_2||_{L^\infty}}{|\ln\varepsilon|^{p-2}} \right) ||\omega_1-\omega_2||_{L^\infty}\\
\leq& \frac{1}{2}||\omega_1-\omega_2||_{L^\infty}.
\end{split}
\end{equation*}
So $ T_\delta $ is a contraction map.

To conclude,  $ T_\delta $ is a contraction map from $ N $ to $ N $ and thus there is a unique $ \omega_{\delta,Z}\in N $ such that $ \omega_{\delta,Z}=T_\delta(\omega_{\delta,Z}) $. Moreover, from \eqref{217}, we have $ ||\omega_{\delta,Z}||_{L^\infty(\Omega)}=O\left( \frac{\ln|\ln\varepsilon|}{|\ln\varepsilon|^2}\right).  $

\end{proof}

\begin{remark}\label{rk1}
Indeed, since $ K,q \in C^\infty$ and $ p>1 $,  we can also check that $ \omega_{\delta,Z} $ is a $ C^1 $ map about $ Z $, see \cite{CLW,CPY} for example.
\end{remark}

\section{Proof of Theorem \ref{thm1}}

From Proposition \ref{exist and uniq of w}, we know that for any $ \delta $ small and $ Z $ satisfying \eqref{admis set}, there exists the unique $ \omega_{\delta,Z}\in E_{\delta,Z} $ satisfying
\begin{equation*}
Q_\delta L_\delta \omega_{\delta,Z}=Q_\delta l_\delta+Q_\delta R_\delta(\omega_{\delta,Z}),
\end{equation*}
i.e., for some $ C_{j,h} $ one has
\begin{equation*}
 L_\delta \omega_{\delta,Z}= l_\delta+  R_\delta(\omega_{\delta,Z})+\sum_{j=1}^m\sum_{h=1}^2C_{j,h}\frac{\partial}{\partial z_{j,h}}(-\delta^2\text{div}(K(z_j)\nabla   V_{\delta, Z,j})).
\end{equation*}

In the following we find proper $ Z=Z(\delta) $ such that $ V_{\delta,Z}+\omega_{\delta,Z} $ is a solution of \eqref{eq1}. Note that the associated functional of \eqref{eq1} is
\begin{equation}\label{functional I}
I_\delta(u)=\frac{\delta^2}{2}\int_{\Omega}\left( K(x)\nabla u|\nabla u\right)-\frac{1}{p+1}\int_{\Omega}(u-q)^{p+1}_+.
\end{equation}
Denote
\begin{equation*}
P_\delta(Z)=I_\delta(V_{\delta,Z}+\omega_{\delta,Z})
\end{equation*}
Then by Remark \ref{rk1}, we know that $ P_\delta(Z) $ is a $ C^1 $ function.

By the regularity of $ K $ and $ q $, it is not hard to check that if $ Z $ is a critical point of $ P_\delta  $, then $ V_{\delta,Z}+\omega_{\delta,Z} $ is a critical point of $ I_\delta $, i.e., a solution of \eqref{eq1}. We now prove that $ P_\delta $ has a critical point.

\begin{proposition}\label{pro401}
There holds
\begin{equation*}
P_{\delta}(Z)=I_\delta(V_{\delta,Z})+O\left( \frac{\varepsilon^2\ln|\ln\varepsilon|}{|\ln\varepsilon|^{p+2}}\right).
\end{equation*}
\end{proposition}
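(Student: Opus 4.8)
The plan is to Taylor-expand the energy $I_\delta$ about the approximate solution $V_{\delta,Z}$ and to show that every term beyond $I_\delta(V_{\delta,Z})$ is of the claimed order, using the a priori bound $\|\omega_{\delta,Z}\|_{L^\infty(\Omega)}=O(\ln|\ln\varepsilon|/|\ln\varepsilon|^2)$ from Proposition~\ref{exist and uniq of w} together with the $L^p$-estimates of $l_{1,\delta}$, $l_{2,\delta}$ and $R_\delta$ recorded in its proof. Writing $V:=V_{\delta,Z}$ and $\omega:=\omega_{\delta,Z}$ and expanding the smooth nonlinearity to second order with cubic remainder $r(\omega)=O\big((V-q)^{p-2}_+|\omega|^3\big)$, one integration by parts (legitimate since $\omega\in H^1_0(\Omega)$) yields
\begin{equation*}
P_\delta(Z)-I_\delta(V)=I_\delta'(V)[\omega]+\tfrac12\int_\Omega (L_\delta\omega)\,\omega-\frac{1}{p+1}\int_\Omega r(\omega).
\end{equation*}

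First I would put the linear term into divergence form. Since $V=\sum_jV_{\delta,Z,j}$ with each $V_{\delta,Z,j}$ solving \eqref{eq4}, the definitions of $l_{1,\delta}$ and $l_{2,\delta}$ give the pointwise identity $-\delta^2\mathrm{div}(K(x)\nabla V)-(V-q)^p_+=-l_{1,\delta}-l_{2,\delta}$, whence
\begin{equation*}
I_\delta'(V)[\omega]=\int_\Omega\big[-\delta^2\mathrm{div}(K(x)\nabla V)-(V-q)^p_+\big]\omega=-\int_\Omega (l_{1,\delta}+l_{2,\delta})\,\omega.
\end{equation*}
For the quadratic term I would use the reduced equation \eqref{216}, which says $L_\delta\omega=l_{1,\delta}+l_{2,\delta}+R_\delta(\omega)+\sum_{j,h}C_{j,h}\frac{\partial}{\partial z_{j,h}}\big(-\delta^2\mathrm{div}(K(z_j)\nabla V_{\delta,Z,j})\big)$, so that $\int(L_\delta\omega)\omega$ equals the same three pairings plus multiplier terms. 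The multiplier terms are negligible: pairing each generator against $\omega\in E_{\delta,Z}$ reduces, as in \eqref{209}, to the small quantities $I_2$ and $I_3$ estimated in Lemma~\ref{coercive esti}, while the coefficients $C_{j,h}$ are themselves small by the mechanism that produced $C_{j,h,N}=O(\varepsilon\ln|\ln\varepsilon|)$ there.

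It then remains to bound the three pairings and the cubic remainder. The contributions of $l_{1,\delta}$, $R_\delta(\omega)$ and $r(\omega)$ are all supported in $\cup_iB_{Ls_{\delta,i}}(z_i)$, a set of measure $O(\varepsilon^2)$; combining the $L^p$-bounds from the proof of Proposition~\ref{exist and uniq of w} with Hölder's inequality — the small support supplying an extra factor $\varepsilon^{2-2/p}$ — and with $\|\omega\|_{L^\infty}$ shows that each of them is $o(\varepsilon^2\ln|\ln\varepsilon|/|\ln\varepsilon|^{p+2})$, hence harmless.

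The delicate term, and the one I expect to be the main obstacle, is $\int_\Omega l_{2,\delta}\,\omega$, since $l_{2,\delta}=\delta^2\sum_j\mathrm{div}\big((K(x)-K(z_j))\nabla V_{\delta,Z,j}\big)$ is \emph{not} concentrated in the small balls but carries the slowly decaying logarithmic tail of the projected profiles across all of $\Omega$, so the small-measure argument no longer applies. Here I would split each summand into the inner region $\{|T_{z_j}(x-z_j)|\le s_{\delta,j}\}$ and its complement, exactly as in the treatment of $I_3$ in Lemma~\ref{coercive esti}, exploiting $K(x)-K(z_j)=O(|x-z_j|)$ to absorb the singularity of $\nabla^2V_{\delta,Z,j}$. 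The inner region contributes only $O(\varepsilon^3\ln|\ln\varepsilon|/|\ln\varepsilon|^{p+2})$, whereas the outer region, where $|\nabla V_{\delta,Z,j}|\sim |\ln\varepsilon|^{-1}|x-z_j|^{-1}$ and the planar area element tempers the logarithmic singularity, produces exactly the announced order $O(\varepsilon^2\ln|\ln\varepsilon|/|\ln\varepsilon|^{p+2})$. Collecting the linear, quadratic and remainder contributions then gives the proposition.
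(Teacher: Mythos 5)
Your proposal is correct, and its core is the same argument as the paper's: expand $I_\delta$ at $V_{\delta,Z}$, control every term by $\|\omega_{\delta,Z}\|_{L^\infty}=O(\ln|\ln\varepsilon|/|\ln\varepsilon|^2)$ together with the $O(\varepsilon^2)$-measure supports of the concentrated terms, and treat the only spread-out term $\int_\Omega l_{2,\delta}\,\omega_{\delta,Z}$ by the inner/outer splitting with $K(x)-K(z_j)=O(|x-z_j|)$ --- this is precisely the paper's bound $O\bigl(\delta^2\|\omega_{\delta,Z}\|_{L^\infty}/|\ln\varepsilon|\bigr)$, and you correctly identify it as the term that saturates the error $O(\varepsilon^2\ln|\ln\varepsilon|/|\ln\varepsilon|^{p+2})$. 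Two bookkeeping differences are worth noting. First, you package the linear terms through the identity $-\delta^2\text{div}(K\nabla V_{\delta,Z})-(V_{\delta,Z}-q)^p_+=-(l_{1,\delta}+l_{2,\delta})$, thereby exploiting the cancellation inside $l_{1,\delta}$; the paper never needs this cancellation, since it bounds $\int_\Omega(V_{\delta,Z}-q)^p_+\omega_{\delta,Z}$ and $\sum_j\int_\Omega(V_{\delta,z_j,\hat{q}_{\delta,j},z_j}-\hat{q}_{\delta,j})^p_+\omega_{\delta,Z}$ separately, each already of the admissible order. Second, for the quadratic term the paper avoids your multiplier terms entirely: since $\omega_{\delta,Z}\in E_{\delta,Z}$, the function $-\delta^2\text{div}(K\nabla\omega_{\delta,Z})$ lies in $F_{\delta,Z}$, so $Q_\delta$ fixes it, and pairing the resulting identity with $\omega_{\delta,Z}$ only invokes the $L^1$-boundedness of $Q_\delta$ (Lemma \ref{lemA-4}). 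Your route instead needs a quantitative bound on the coefficients $C_{j,h}$ of the reduced equation, and this does not follow verbatim from Lemma \ref{coercive esti}, where the equation solved by $u_N$ has no $l_{1,\delta}+l_{2,\delta}+R_\delta$ right-hand side: you must additionally pair $l_{1,\delta}$, $l_{2,\delta}$ and $R_\delta(\omega_{\delta,Z})$ against $\partial V_{\delta,Z,i}/\partial z_{i,\hbar}$ and invert the matrix \eqref{coef of C}. This is routine and produces $C_{j,h}$ far below the threshold $O(1/|\ln\varepsilon|)$ required to make the multiplier contribution negligible (the generator pairing being $O(\varepsilon^2\ln|\ln\varepsilon|/|\ln\varepsilon|^{p+1})$ by \eqref{209}, \eqref{303}, \eqref{304}), but as written the phrase ``by the mechanism that produced $C_{j,h,N}=O(\varepsilon\ln|\ln\varepsilon|)$'' glosses over this step; with it supplied, your proof is complete.
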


\begin{proof}
Note  that
\begin{equation*}
\begin{split}
P_{\delta}(Z)=&I_\delta(V_{\delta,Z})+\delta^2\int_{\Omega}\left( K(x)\nabla V_{\delta,Z}|\nabla \omega_{\delta,Z}\right) +\frac{\delta^2}{2}\int_{\Omega}\left( K(x)\nabla \omega_{\delta,Z}|\nabla \omega_{\delta,Z}\right)\\
&-\frac{1}{p+1}\left( \int_{\Omega}(V_{\delta,Z}+\omega_{\delta,Z}-q)^{p+1}_+-\int_{\Omega}(V_{\delta,Z}-q)^{p+1}_+\right).
\end{split}
\end{equation*}
By Proposition \ref{exist and uniq of w}, we get
\begin{equation*}
\begin{split}
&\int_{\Omega}(V_{\delta,Z}+\omega_{\delta,Z}-q)^{p+1}_+-\int_{\Omega}(V_{\delta,Z}-q)^{p+1}_+\\
=&(p+1)\sum_{j=1}^m\int_{B_{Ls_{\delta,j}}(z_j)}(V_{\delta,Z}-q)^{p}_+\omega_{\delta,Z}+O\left( \sum_{j=1}^m\int_{B_{Ls_{\delta,j}}(z_j)}(V_{\delta,Z}-q)^{p-1}_+\omega_{\delta,Z}^2\right) \\
=&O\left( \sum_{j=1}^m\frac{s_{\delta,j}^2||\omega_{\delta,Z}||_{L^\infty}}{|\ln\varepsilon|^p}\right) +O\left( \sum_{j=1}^m\frac{s_{\delta,j}^2||\omega_{\delta,Z}||_{L^\infty}^2}{|\ln\varepsilon|^{p-1}}\right) \\
=&O\left( \frac{\varepsilon^2\ln|\ln\varepsilon|}{|\ln\varepsilon|^{p+2}}\right).
\end{split}
\end{equation*}
Using Proposition \ref{exist and uniq of w}, we obtain
\begin{equation*}
\begin{split}
&\delta^2\int_{\Omega}\left( K(x)\nabla V_{\delta,Z}|\nabla \omega_{\delta,Z}\right)\\
=&\sum_{j=1}^m\int_{B_{Ls_{\delta,j}}(z_j)}(V_{\delta,z_j,\hat{q}_{\delta,j},z_j}-\hat{q}_{\delta,j})^p_+\omega_{\delta,Z}+\sum_{j=1}^m\delta^2\int_{\Omega}\left( (K(x)-K(z_j))\nabla V_{\delta,Z}|\nabla \omega_{\delta,Z}\right)\\
=&O\left( \sum_{j=1}^m\frac{s_{\delta,j}^2||\omega_{\delta,Z}||_{L^\infty}}{|\ln\varepsilon|^p}\right) +O\left( \frac{\delta^2||\omega_{\delta,Z}||_{L^\infty}}{|\ln\varepsilon|}\right)\\
=&O\left( \frac{\varepsilon^2\ln|\ln\varepsilon|}{|\ln\varepsilon|^{p+2}}\right).
\end{split}
\end{equation*}
Now we calculate the term $ \frac{\delta^2}{2}\int_{\Omega}\left( K(x)\nabla \omega_{\delta,Z}|\nabla \omega_{\delta,Z}\right). $ Since  $ \omega_{\delta,Z}\in E_{\delta,Z} $, we get
\begin{equation*}
\begin{split}
Q_\delta L_\delta \omega_{\delta,Z}=&Q_\delta(-\delta^2\text{div}(K(x)\nabla \omega_{\delta,Z}))-Q_\delta(p(V_{\delta,Z}-q)^{p-1}_+\omega_{\delta,Z})\\
=&-\delta^2\text{div}(K(x)\nabla \omega_{\delta,Z})-Q_\delta(p(V_{\delta,Z}-q)^{p-1}_+\omega_{\delta,Z}),
\end{split}
\end{equation*}
which combined with $ Q_\delta L_\delta \omega_{\delta,Z}=Q_\delta l_\delta+Q_\delta R_\delta(\omega_{\delta,Z})  $ yields
\begin{equation*}
-\delta^2\text{div}(K(x)\nabla \omega_{\delta,Z})=Q_\delta(p(V_{\delta,Z}-q)^{p-1}_+\omega_{\delta,Z})+Q_\delta l_\delta+Q_\delta R_\delta(\omega_{\delta,Z}).
\end{equation*}
Hence by Lemmas \ref{lemA-4}, \ref{lemA-5} and Proposition \ref{exist and uniq of w}, we get
\begin{equation*}
\begin{split}
&\delta^2\int_{\Omega}\left( K(x)\nabla \omega_{\delta,Z}|\nabla \omega_{\delta,Z}\right)\\
=&\int_{\Omega}Q_\delta(p(V_{\delta,Z}-q)^{p-1}_+\omega_{\delta,Z})\omega_{\delta,Z}+\int_{\Omega}Q_\delta l_\delta\omega_{\delta,Z}+\int_{\Omega}Q_\delta R_\delta(\omega_{\delta,Z})\omega_{\delta,Z}\\
\leq &||Q_\delta(p(V_{\delta,Z}-q)^{p-1}_+\omega_{\delta,Z})||_{L^1}||\omega_{\delta,Z}||_{L^\infty}+||Q_\delta l_\delta||_{L^1}||\omega_{\delta,Z}||_{L^\infty}+||Q_\delta R_\delta(\omega_{\delta,Z})||_{L^1}||\omega_{\delta,Z}||_{L^\infty}\\
\leq&C(||(p(V_{\delta,Z}-q)^{p-1}_+\omega_{\delta,Z})||_{L^1}+|| l_\delta||_{L^1}+|| R_\delta(\omega_{\delta,Z})||_{L^1})||\omega_{\delta,Z}||_{L^\infty}\\
= &O\left( \frac{\varepsilon^2||\omega_{\delta,Z}||_{L^\infty}^2}{|\ln\varepsilon|^{p-1}}\right) +O\left( \frac{\varepsilon^2\ln|\ln\varepsilon|||\omega_{\delta,Z}||_{L^\infty}}{|\ln\varepsilon|^{p+1}}\right) +O\left( \frac{\varepsilon^2||\omega_{\delta,Z}||_{L^\infty}^3}{|\ln\varepsilon|^{p-2}}\right) \\
=&O\left( \frac{\varepsilon^2\ln|\ln\varepsilon|}{|\ln\varepsilon|^{p+2}}\right).
\end{split}
\end{equation*}
To conclude, we get $ P_{\delta}(Z)=I_\delta(V_{\delta,Z})+O\left(\frac{\varepsilon^2\ln|\ln\varepsilon|}{|\ln\varepsilon|^{p+2}}\right). $

\end{proof}

\begin{lemma}\label{order of main term}
There holds
\begin{equation*}
I_\delta(V_{\delta,Z})=\sum_{j=1}^m\frac{\pi\delta^2}{\ln\frac{R}{\varepsilon}}q^2(z_j)\sqrt{det(K(z_j))}+O\left( \frac{\delta^2\ln|\ln\varepsilon|}{|\ln\varepsilon|^2}\right).
\end{equation*}
\end{lemma}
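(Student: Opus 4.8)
The plan is to evaluate $I_\delta(V_{\delta,Z})$ from \eqref{functional I} by separating the Dirichlet energy from the nonlinear term and reducing everything to explicit integrals of the profile $\phi$ localized near each $z_i$. Write $U_i:=V_{\delta,z_i,\hat q_{\delta,i},z_i}$ and $\hat q_i:=\hat q_{\delta,i}$, so that each block $V_{\delta,Z,i}$ solves $-\delta^2\mathrm{div}(K(z_i)\nabla V_{\delta,Z,i})=(U_i-\hat q_i)^p_+$ (cf. \eqref{eq4}) with the \emph{frozen} matrix $K(z_i)$. Since $V_{\delta,Z}=0$ on $\partial\Omega$, integrating by parts termwise gives
\[
\frac{\delta^2}{2}\int_\Omega(K(x)\nabla V_{\delta,Z}|\nabla V_{\delta,Z})=\frac12\sum_{i=1}^m\int_\Omega\big[-\delta^2\mathrm{div}(K(x)\nabla V_{\delta,Z,i})\big]V_{\delta,Z}.
\]
Splitting $-\delta^2\mathrm{div}(K(x)\nabla V_{\delta,Z,i})=(U_i-\hat q_i)^p_+-\delta^2\mathrm{div}((K(x)-K(z_i))\nabla V_{\delta,Z,i})$ yields a main part $\frac12\sum_i\int(U_i-\hat q_i)^p_+V_{\delta,Z}$ plus a coefficient-freezing remainder. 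Integrating the remainder by parts once more and using $|K(x)-K(z_i)|=O(|x-z_i|)$ together with the two-scale gradient bounds on $V_{\delta,Z,i}$ (large inside $B_{s_{\delta,i}}(z_i)$, of size $O(1/(|\ln\varepsilon|\,r))$ in the logarithmic outer region), the remainder is $O(\delta^2/|\ln\varepsilon|^2)$ and is absorbed into the claimed error.

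Next I would localize. By Lemma \ref{lemA-5} the positive parts $(V_{\delta,Z}-q)_+$ and each $(U_i-\hat q_i)_+$ are supported in $B_{Ls_{\delta,i}}(z_i)$, so both the kinetic main part and the nonlinear term split into a sum over $i$. On each such ball, \eqref{203} gives $V_{\delta,Z}-q=(U_i-\hat q_i)+O(\ln|\ln\varepsilon|/|\ln\varepsilon|^2)$ and smoothness of $q$ gives $q(x)=q(z_i)+O(s_{\delta,i})$, hence $V_{\delta,Z}=q(z_i)+(U_i-\hat q_i)+O(\ln|\ln\varepsilon|/|\ln\varepsilon|^2)$ on the support. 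Substituting this and expanding $((U_i-\hat q_i)+\eta)^{p+1}_+$ with $\eta=O(\ln|\ln\varepsilon|/|\ln\varepsilon|^2)$, all cross terms are controlled by $\int(U_i-\hat q_i)^p_+=O(\varepsilon^2/|\ln\varepsilon|^p)$ times $\eta$, which stays within $O(\delta^2\ln|\ln\varepsilon|/|\ln\varepsilon|^2)$. Collecting the surviving pieces (the kinetic term contributes $+\tfrac12\int(U_i-\hat q_i)^{p+1}_+$ and the nonlinear term $-\tfrac1{p+1}\int(U_i-\hat q_i)^{p+1}_+$) leaves
\[
I_\delta(V_{\delta,Z})=\sum_{i=1}^m\left[\tfrac12 q(z_i)\int_\Omega(U_i-\hat q_i)^p_+ +\tfrac{p-1}{2(p+1)}\int_\Omega(U_i-\hat q_i)^{p+1}_+\right]+O\!\left(\frac{\delta^2\ln|\ln\varepsilon|}{|\ln\varepsilon|^2}\right).
\]

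The remaining task is the explicit evaluation of the two profile integrals. With the rescaling $y=T_{z_i}(x-z_i)/s_{\delta,i}$, whose Jacobian is $s_{\delta,i}^2\sqrt{\det K(z_i)}$ because $(T_{z_i}^{-1})(T_{z_i}^{-1})^t=K(z_i)$, and with $A_i:=\delta^{2/(p-1)}s_{\delta,i}^{-2/(p-1)}$, one has $(U_i-\hat q_i)_+=A_i\phi(|y|)$, so by \eqref{PI}
\[
\int_\Omega(U_i-\hat q_i)^p_+=2\pi|\phi'(1)|A_i^p s_{\delta,i}^2\sqrt{\det K(z_i)},\qquad \int_\Omega(U_i-\hat q_i)^{p+1}_+=\tfrac{\pi(p+1)}{2}|\phi'(1)|^2 A_i^{p+1}s_{\delta,i}^2\sqrt{\det K(z_i)}.
\]
The key algebraic identities are $A_i^{p-1}s_{\delta,i}^2=\delta^2$ and $A_i=\hat q_i/(|\phi'(1)|\ln\frac{R}{s_{\delta,i}})$ from \eqref{201}, giving $A_i^p s_{\delta,i}^2=A_i\delta^2$ and $A_i^{p+1}s_{\delta,i}^2=A_i^2\delta^2$. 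Since $A_i=O(1/|\ln\varepsilon|)$, the $(p+1)$-integral is $O(\delta^2/|\ln\varepsilon|^2)$ and joins the error term. For the leading piece I would insert $\hat q_i=q(z_i)+O(1/|\ln\varepsilon|)$ from \eqref{q_i choice} and $1/\ln\frac{R}{s_{\delta,i}}=1/\ln\frac{R}{\varepsilon}+O(\ln|\ln\varepsilon|/|\ln\varepsilon|^2)$ from \eqref{2000}, so that $\tfrac12 q(z_i)\int(U_i-\hat q_i)^p_+=\pi q^2(z_i)\sqrt{\det K(z_i)}\,\delta^2/\ln\frac{R}{\varepsilon}+O(\delta^2\ln|\ln\varepsilon|/|\ln\varepsilon|^2)$, which is precisely the asserted main term.

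I expect the main obstacle to be the coefficient-freezing remainder. Because $\mathcal{L}_H$ is genuinely in divergence form with variable $K$, each $V_{\delta,Z,i}$ only satisfies the frozen equation, so one must verify that $\delta^2\mathrm{div}((K(x)-K(z_i))\nabla V_{\delta,Z,i})$ contributes no more than $O(\delta^2/|\ln\varepsilon|^2)$; this forces one to combine the $O(|x-z_i|)$ vanishing of $K(x)-K(z_i)$ with the sharp inner/outer gradient structure of the approximate solution, whereas in the constant-coefficient case this error would simply be absent. Everything else is careful bookkeeping to confirm that every cross term, and the entire $(p+1)$-power contribution, is dominated by $O(\delta^2\ln|\ln\varepsilon|/|\ln\varepsilon|^2)$.
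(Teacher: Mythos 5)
Your proposal is correct and follows essentially the same route as the paper's proof: integrate the Dirichlet energy by parts, freeze the coefficient matrix at each $z_i$ with an $O(\delta^2/|\ln\varepsilon|^2)$ remainder, localize to the balls $B_{Ls_{\delta,i}}(z_i)$ via Lemma \ref{lemA-5}, evaluate the profile integrals through the rescaling $y=T_{z_i}(x-z_i)/s_{\delta,i}$ (Jacobian $s_{\delta,i}^2\sqrt{\det K(z_i)}$) and the Pohozaev identities \eqref{PI}, then conclude with \eqref{2000} and $\hat q_{\delta,i}=q(z_i)+O(1/|\ln\varepsilon|)$. The only (harmless) difference is bookkeeping: you pair $(U_i-\hat q_i)^p_+$ against the full $V_{\delta,Z}$ and invoke the expansion \eqref{203}, whereas the paper pairs against each block $V_{\delta,Z,j}$ via the projection formula \eqref{202} and computes the Green's-function corrections in \eqref{219}--\eqref{220} explicitly before discarding them as $O(\delta^2/|\ln\varepsilon|^2)$ --- both treatments rest on the same cancellation built into the choice \eqref{q_i choice} of $\hat q_{\delta,i}$.
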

\begin{proof}
Note that  by the definition of $ V_{\delta,Z} $,
\begin{equation}\label{218}
\begin{split}
I_\delta(V_{\delta,Z})=&\frac{\delta^2}{2}\int_{\Omega}\left( K(x)\nabla V_{\delta,Z}|\nabla V_{\delta,Z}\right)-\frac{1}{p+1}\int_{\Omega}(V_{\delta,Z}-q)^{p+1}_+\\
=&\frac{1}{2}\sum_{j=1}^m\int_{\Omega}(V_{\delta,z_j,\hat{q}_{\delta,j},z_j}-\hat{q}_{\delta,j})^p_+V_{\delta,Z,j}+\frac{1}{2}\sum_{j=1}^m\delta^2\int_{\Omega}\left( (K(x)-K(z_j))\nabla V_{\delta,Z,j}|\nabla V_{\delta,Z,j}\right)\\
&+\frac{1}{2}\sum_{1\leq i\neq j\leq m}\int_{\Omega}(V_{\delta,z_j,\hat{q}_{\delta,j},z_j}-\hat{q}_{\delta,j})^p_+V_{\delta,Z,i}+\frac{1}{2}\sum_{1\leq i\neq j\leq m}\delta^2\int_{\Omega}\left( (K(x)-K(z_j))\nabla V_{\delta,Z,j}|\nabla V_{\delta,Z,i}\right)\\
&-\frac{1}{p+1}\int_{\Omega}(V_{\delta,Z}-q)^{p+1}_+.
\end{split}
\end{equation}
By \eqref{202}, we have
\begin{equation*}
\begin{split}
&\int_{\Omega}(V_{\delta,z_j,\hat{q}_{\delta,j},z_j}-\hat{q}_{\delta,j})^p_+V_{\delta,Z,j}\\
=&\hat{q}_{\delta,j}\int_{\Omega}(V_{\delta,z_j,\hat{q}_{\delta,j},z_j}-\hat{q}_{\delta,j})^p_++\int_{\Omega}(V_{\delta,z_j,\hat{q}_{\delta,j},z_j}-\hat{q}_{\delta,j})^{p+1}_+-\frac{\hat{q}_{\delta,j}}{\ln\frac{R}{s_{\delta,j}}}\int_{\Omega}(V_{\delta,z_j,\hat{q}_{\delta,j},z_j}-\hat{q}_{\delta,j})^p_+g_{z_j}(T_{z_j}x,T_{z_j}z_j).
\end{split}
\end{equation*}
By the definition of $ V_{\delta,z_j,\hat{q}_{\delta,j},z_j} $, the fact that $ T_{z_j}^{-1}(T_{z_j}^{-1})^t=K(z_j) $ and \eqref{PI}, we get
\begin{equation*}
\begin{split}
&\hat{q}_{\delta,j}\int_{\Omega}(V_{\delta,z_j,\hat{q}_{\delta,j},z_j}-\hat{q}_{\delta,j})^p_+\\
=&\hat{q}_{\delta,j} s_{\delta,j}^2(\frac{\delta}{s_{\delta,j}})^{\frac{2p}{p-1}}\int_{|T_{z_j}x|\leq 1}\phi(T_{z_j}x)^pdx\\
=&\hat{q}_{\delta,j} s_{\delta,j}^2(\frac{\delta}{s_{\delta,j}})^{\frac{2p}{p-1}}\sqrt{det(K(z_j))}\cdot 2\pi|\phi'(1)|\\
=&\hat{q}_{\delta,j}\delta^2|\phi'(1)|^{p-1}\left( \frac{\ln\frac{R}{s_{\delta,j}}}{\hat{q}_{\delta,j}}\right)^{p-1}|\phi'(1)|^{-p}\left( \frac{\ln\frac{R}{s_{\delta,j}}}{\hat{q}_{\delta,j}}\right)^{-p}\sqrt{det(K(z_j))}\cdot 2\pi|\phi'(1)|\\
=&\frac{2\pi\delta^2}{\ln\frac{R}{s_{\delta,j}}}\hat{q}_{\delta,j}^2\sqrt{det(K(z_j))}.
\end{split}
\end{equation*}
Similarly, we have
\begin{equation*}
\begin{split}
&\int_{\Omega}(V_{\delta,z_j,\hat{q}_{\delta,j},z_j}-\hat{q}_{\delta,j})^{p+1}_+\\
=& s_{\delta,j}^2\left( \frac{\delta}{s_{\delta,j}}\right)^{\frac{2(p+1)}{p-1}}\sqrt{det(K(z_j))}\cdot \frac{(p+1)\pi}{2}|\phi'(1)|^2\\
=&\delta^2|\phi'(1)|^{p-1}\left( \frac{\ln\frac{R}{s_{\delta,j}}}{\hat{q}_{\delta,j}}\right)^{p-1}|\phi'(1)|^{-(p+1)}\left( \frac{\ln\frac{R}{s_{\delta,j}}}{\hat{q}_{\delta,j}}\right)^{-(p+1)}\sqrt{det(K(z_j))}\cdot \frac{(p+1)\pi}{2}|\phi'(1)|^2\\
=&\frac{(p+1)\pi\delta^2}{2\left(  \ln\frac{R}{s_{\delta,j}}\right)^2}\hat{q}_{\delta,j}^2\sqrt{det(K(z_j))},
\end{split}
\end{equation*}
and
\begin{equation*}
\begin{split}
&\frac{\hat{q}_{\delta,j}}{\ln\frac{R}{s_{\delta,j}}}\int_{\Omega}(V_{\delta,z_j,\hat{q}_{\delta,j},z_j}-\hat{q}_{\delta,j})^p_+g_{z_j}(T_{z_j}x,T_{z_j}z_j)\\
=&g_{z_j}(T_{z_j}z_j,T_{z_j}z_j)\frac{\hat{q}_{\delta,j}}{\ln\frac{R}{s_{\delta,j}}}\int_{\Omega}(V_{\delta,z_j,\hat{q}_{\delta,j},z_j}-\hat{q}_{\delta,j})^p_++O\left( \frac{\varepsilon^3|\nabla g_{z_j}(T_{z_j}z_j,T_{z_j}z_j)|}{|\ln\varepsilon|^{p+1}}\right) \\
=&\frac{2\pi\delta^2g_{z_j}(T_{z_j}z_j,T_{z_j}z_j)}{\left( \ln\frac{R}{s_{\delta,j}}\right)^2}\hat{q}_{\delta,j}^2\sqrt{det(K(z_j))}+O\left(\frac{\varepsilon^3}{|\ln\varepsilon|^{p+1}}\right),
\end{split}
\end{equation*}
where we have used  Lemma \ref{lemA-2}. Thus we get
\begin{equation}\label{219}
\begin{split}
\int_{\Omega}(V_{\delta,z_j,\hat{q}_{\delta,j},z_j}-\hat{q}_{\delta,j})^p_+V_{\delta,Z,j}
=&\frac{2\pi\delta^2}{\ln\frac{R}{s_{\delta,j}}}\hat{q}_{\delta,j}^2\sqrt{det(K(z_j))}+\frac{(p+1)\pi\delta^2}{2\left(  \ln\frac{R}{s_{\delta,j}}\right)^2}\hat{q}_{\delta,j}^2\sqrt{det(K(z_j))}\\
&-\frac{2\pi\delta^2g_{z_j}(T_{z_j}z_j,T_{z_j}z_j)}{\left( \ln\frac{R}{s_{\delta,j}}\right)^2}\hat{q}_{\delta,j}^2\sqrt{det(K(z_j))}+O\left( \frac{\varepsilon^3}{|\ln\varepsilon|^{p+1}}\right).
\end{split}
\end{equation}
Similarly for $ 1\leq i\neq j\leq m $,
\begin{equation}\label{220}
\begin{split}
&\int_{\Omega}(V_{\delta,z_j,\hat{q}_{\delta,j},z_j}-\hat{q}_{\delta,j})^p_+V_{\delta,Z,i}\\
=&\frac{\hat{q}_{\delta,i}}{\ln\frac{R}{s_{\delta,i}}}\int_{\Omega}(V_{\delta,z_j,\hat{q}_{\delta,j},z_j}-\hat{q}_{\delta,j})^p_+\bar{G}_{z_i}(T_{z_i}x,T_{z_i}z_i)\\
=&\frac{2\pi\delta^2\bar{G}_{z_i}(T_{z_i}z_j,T_{z_i}z_i)}{\ln\frac{R}{s_{\delta,i}}\ln\frac{R}{s_{\delta,j}}}\hat{q}_{\delta,i}\hat{q}_{\delta,j}\sqrt{det(K(z_j))}+O\left(\frac{\varepsilon^3}{|\ln\varepsilon|^{p+1}}\right).
\end{split}
\end{equation}
For any $ 1\leq j\leq m $, using \eqref{200} and Lemma \ref{lemA-2} we have
\begin{equation}\label{221}
\begin{split}
&\delta^2\int_{\Omega}\left( (K(x)-K(z_j))\nabla V_{\delta,Z,j}|\nabla V_{\delta,Z,j}\right)\\
=&\int_{T_{z_j}^{-1}(B_{s_{\delta,j}}(0))+z_j}+\int_{T_{z_j}^{-1}(B_{\mu}(0))+z_j\backslash T_{z_j}^{-1}(B_{s_{\delta,j}}(0))+z_j}+\int_{\Omega\backslash T_{z_j}^{-1}(B_{\mu}(0))+z_j}\\
&\delta^2\left( (K(x)-K(z_j))\nabla V_{\delta,Z,j}|\nabla V_{\delta,Z,j}\right)\\
=&O\left( \frac{\delta^2\varepsilon}{|\ln\varepsilon|^2}\right) +O\left( \frac{\delta^2}{|\ln\varepsilon|^2}\right) +O\left( \frac{\delta^2}{|\ln\varepsilon|^2}\right) \\
=&O\left( \frac{\delta^2}{|\ln\varepsilon|^2}\right).
\end{split}
\end{equation}
Similarly for  $ 1\leq i\neq j\leq m $,
\begin{equation}\label{222}
\begin{split}
\delta^2\int_{\Omega}\left( (K(x)-K(z_j))\nabla V_{\delta,Z,j}|\nabla V_{\delta,Z,i}\right)
=O\left( \frac{\delta^2}{|\ln\varepsilon|^2}\right).
\end{split}
\end{equation}
Finally by Lemma \ref{lemA-5} and \eqref{203}, we get
\begin{equation}\label{223}
\begin{split}
&\int_{\Omega}(V_{\delta,Z}-q)^{p+1}_+\\
=&\sum_{j=1}^m\int_{B_{Ls_{\delta,j}}(z_j)}\left( V_{\delta, z_j, \hat{q}_{\delta,j}, z_j}-\hat{q}_{\delta,j}+O\left( \frac{\ln|\ln\varepsilon|}{|\ln\varepsilon|^2}\right) \right)^{p+1}_+\\
=&\sum_{j=1}^m\int_{\Omega}(V_{\delta, z_j, \hat{q}_{\delta,j}, z_j}-\hat{q}_{\delta,j})^{p+1}_++O\left( \frac{\ln|\ln\varepsilon|}{|\ln\varepsilon|^2}\sum_{j=1}^m\int_{\Omega}(V_{\delta, z_j, \hat{q}_{\delta,j}, z_j}-\hat{q}_{\delta,j})^{p}_+\right) \\
=&\sum_{j=1}^m\frac{(p+1)\pi\delta^2}{2\left(  \ln\frac{R}{s_{\delta,j}}\right)^2}\hat{q}_{\delta,j}^2\sqrt{det(K(z_j))}+O\left( \frac{\delta^2\ln|\ln\varepsilon|}{|\ln\varepsilon|^3}\right).
\end{split}
\end{equation}
Taking \eqref{219}, \eqref{220}, \eqref{221}, \eqref{222} and \eqref{223} into \eqref{218}, one has
\begin{equation*}
\begin{split}
I_\delta(V_{\delta,Z})=&\sum_{j=1}^m\frac{\pi\delta^2}{\ln\frac{R}{s_{\delta,j}}}\hat{q}_{\delta,j}^2\sqrt{det(K(z_j))}+\sum_{j=1}^m\frac{(p+1)\pi\delta^2}{4\left(  \ln\frac{R}{s_{\delta,j}}\right)^2}\hat{q}_{\delta,j}^2\sqrt{det(K(z_j))}\\
&-\sum_{j=1}^m\frac{\pi\delta^2g_{z_j}(T_{z_j}z_j,T_{z_j}z_j)}{\left( \ln\frac{R}{s_{\delta,j}}\right)^2}\hat{q}_{\delta,j}^2\sqrt{det(K(z_j))}\\
&+\sum_{1\leq i\neq j\leq m}\frac{\pi\delta^2\bar{G}_{z_i}(T_{z_i}z_j,T_{z_i}z_i)}{\ln\frac{R}{s_{\delta,i}}\ln\frac{R}{s_{\delta,j}}}\hat{q}_{\delta,i}\hat{q}_{\delta,j}\sqrt{det(K(z_j))}\\
&-\sum_{j=1}^m\frac{\pi\delta^2}{2\left(  \ln\frac{R}{s_{\delta,j}}\right)^2}\hat{q}_{\delta,j}^2\sqrt{det(K(z_j))}+O\left( \frac{\delta^2}{|\ln\varepsilon|^2}\right).
\end{split}
\end{equation*}
The result follows from \eqref{2000} and the fact that $ \hat{q}_{\delta,j}=q(z_j)+O\left( \frac{1}{|\ln\varepsilon|}\right). $

\end{proof}
\textbf{Proof of Theorem \ref{thm1}:} By Propositions \ref{pro401} and \ref{order of main term}, we obtain
\begin{equation*}
P_\delta(Z)=\sum_{j=1}^m\frac{\pi\delta^2}{\ln\frac{R}{\varepsilon}}q^2\sqrt{det(K)}(z_j)+O\left( \frac{\delta^2\ln|\ln\varepsilon|}{|\ln\varepsilon|^2}\right).
\end{equation*}
If each $ x_{0,j}$ is a strict  local maximum (minimum) point of $ q^2\sqrt{det(K)} $ for every $j=1,\cdots,m$, then for $ \delta>0 $ sufficiently small, there exist at least one point $ z_{i,\delta} $ near $ x_{0,i} $ such that $ Z_\delta=(z_{1,\delta}, \cdots, z_{m,\delta}) $ is a maximum (minimum) point of $ P_\delta $ and as $ \delta\to 0 $,
\begin{equation*}
(z_{1,\delta}, \cdots, z_{m,\delta})\to (x_{0,1},\cdots,x_{0,m}).
\end{equation*}
 Hence, we get a solution $ w_\delta=V_{\delta,Z}+\omega_{\delta,Z} $ of \eqref{eq1}. Let $ u_\varepsilon=|\ln\varepsilon|w_\delta $ and $ \delta=\varepsilon|\ln\varepsilon|^{-\frac{p-1}{2}} $, we get solutions of \eqref{eq1-1}. Define the vortex set $ \bar{A}_{\varepsilon,i}=\{u_\varepsilon>q\ln\frac{1}{\varepsilon}\}\cap B_{\bar{\rho}}(x_{0,i})  $. From Lemma \ref{lemA-5}, we can find  constants $ R_1, R_2>0 $ such that
\begin{equation*}
B_{R_1\varepsilon}(z_{i,\delta})\subseteq \bar{A}_{\varepsilon,i}\subseteq B_{R_2\varepsilon}(z_{i,\delta})\subseteq B_{\bar{\rho}}(x_{0,i}).
\end{equation*}
It suffices to calculate the circulation of $ u_\varepsilon. $ Define $ \kappa_i(u_\varepsilon)=\frac{1}{\varepsilon^2}\int_{B_{\bar{\rho}}(x_{0,i})}(u_\varepsilon-q\ln\frac{1}{\varepsilon})^{p}_+dx. $ We have
\begin{lemma}\label{circulation}
	There holds
	\begin{equation*}
	\lim_{\varepsilon\to 0}\kappa_i(u_\varepsilon)=2\pi q\sqrt{det(K)}(x_{0,i}).
	\end{equation*}
\end{lemma}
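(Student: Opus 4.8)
The plan is to unwind the two scalings and reduce the circulation to the model integral already evaluated inside Lemma~\ref{order of main term}. Since $ u_\varepsilon=|\ln\varepsilon|w_\delta $ with $ w_\delta=V_{\delta,Z}+\omega_{\delta,Z} $ and $ \delta^2=\varepsilon^2|\ln\varepsilon|^{-(p-1)} $, one has $ u_\varepsilon-q\ln\frac1\varepsilon=|\ln\varepsilon|(w_\delta-q) $ and hence
\begin{equation*}
\kappa_i(u_\varepsilon)=\frac{|\ln\varepsilon|^p}{\varepsilon^2}\int_{B_{\bar\rho}(x_{0,i})}(w_\delta-q)^p_+\,dx=\frac{|\ln\varepsilon|}{\delta^2}\int_{B_{\bar\rho}(x_{0,i})}(w_\delta-q)^p_+\,dx.
\end{equation*}
First I would localize: by Lemma~\ref{lemA-5} together with the smallness of $ \omega_{\delta,Z} $ from Proposition~\ref{exist and uniq of w}, the set $ \{w_\delta>q\} $ meets $ B_{\bar\rho}(x_{0,i}) $ only inside $ B_{Ls_{\delta,i}}(z_i) $, so the integration domain may be replaced by $ B_{Ls_{\delta,i}}(z_i) $.

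On that ball, \eqref{203} and the bound $ \|\omega_{\delta,Z}\|_{L^\infty}=O(\ln|\ln\varepsilon|/|\ln\varepsilon|^2) $ give
\begin{equation*}
w_\delta-q=\big(V_{\delta,z_i,\hat q_{\delta,i},z_i}-\hat q_{\delta,i}\big)+E,\qquad \|E\|_{L^\infty}=O\!\left(\frac{\ln|\ln\varepsilon|}{|\ln\varepsilon|^2}\right).
\end{equation*}
Writing $ a=V_{\delta,z_i,\hat q_{\delta,i},z_i}-\hat q_{\delta,i} $ and using the pointwise bound $ |(a+E)^p_+-a^p_+|\le C(|a|+|E|)^{p-1}|E| $, the leading term is evaluated exactly: changing variables by $ y=T_{z_i}(x-z_i)/s_{\delta,i} $ (so that $ dx=s_{\delta,i}^2\sqrt{det(K(z_i))}\,dy $ since $ T_{z_i}^{-1}(T_{z_i}^{-1})^t=K(z_i) $), then using $ \int_{B_1(0)}\phi^p=2\pi|\phi'(1)| $ from \eqref{PI} and the defining relation \eqref{201}, one recovers the identity already appearing in Lemma~\ref{order of main term},
\begin{equation*}
\int_{B_{Ls_{\delta,i}}(z_i)}\big(V_{\delta,z_i,\hat q_{\delta,i},z_i}-\hat q_{\delta,i}\big)^p_+\,dx=\frac{2\pi\delta^2}{\ln\frac{R}{s_{\delta,i}}}\,\hat q_{\delta,i}\sqrt{det(K(z_i))}.
\end{equation*}
For the remainder the same change of variables yields $ \int a^{p-1}_+\,dx=O(s_{\delta,i}^2/|\ln\varepsilon|^{p-1})=O(\varepsilon^2/|\ln\varepsilon|^{p-1}) $ (using $ (\delta/s_{\delta,i})^{2/(p-1)}=O(1/|\ln\varepsilon|) $ and $ s_{\delta,i}=O(\varepsilon) $), so the error integral is $ O(\varepsilon^2\ln|\ln\varepsilon|/|\ln\varepsilon|^{p+1}) $.

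Collecting these,
\begin{equation*}
\kappa_i(u_\varepsilon)=\frac{2\pi|\ln\varepsilon|}{\ln\frac{R}{s_{\delta,i}}}\,\hat q_{\delta,i}\sqrt{det(K(z_i))}+O\!\left(\frac{\ln|\ln\varepsilon|}{|\ln\varepsilon|}\right),
\end{equation*}
and it remains to pass to the limit. By \eqref{2000} one has $ \ln\frac{R}{s_{\delta,i}}=\ln\frac{R}{\varepsilon}(1+o(1)) $, so $ |\ln\varepsilon|/\ln\frac{R}{s_{\delta,i}}\to1 $; moreover $ \hat q_{\delta,i}=q(z_i)+O(1/|\ln\varepsilon|) $ and $ z_i=z_{i,\delta}\to x_{0,i} $, whence $ \hat q_{\delta,i}\to q(x_{0,i}) $ and $ \sqrt{det(K(z_i))}\to\sqrt{det(K(x_{0,i}))} $. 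This gives $ \lim_{\varepsilon\to0}\kappa_i(u_\varepsilon)=2\pi q\sqrt{det(K)}(x_{0,i}) $. The one point deserving care — the main obstacle — is that the prefactor $ |\ln\varepsilon|/\delta^2=|\ln\varepsilon|^p/\varepsilon^2 $ is large, so every error contribution (both the $ O(\ln|\ln\varepsilon|/|\ln\varepsilon|^2) $ deviation in \eqref{203} and the correction $ \omega_{\delta,Z} $) must be shown to survive multiplication by it only as $ o(1) $; the estimate $ \int a^{p-1}_+|E|\,dx=O(\varepsilon^2\ln|\ln\varepsilon|/|\ln\varepsilon|^{p+1}) $ is precisely what guarantees this, since after multiplication it becomes $ O(\ln|\ln\varepsilon|/|\ln\varepsilon|) $.
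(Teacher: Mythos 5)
Your proposal is correct and follows essentially the same route as the paper's proof: the same rescaling $u_\varepsilon=|\ln\varepsilon|w_\delta$, localization to $B_{Ls_{\delta,i}}(z_{i,\delta})$ via Lemma \ref{lemA-5} and Proposition \ref{exist and uniq of w}, reduction to the profile $V_{\delta,z_i,\hat q_{\delta,i},z_i}-\hat q_{\delta,i}$ through \eqref{203}, evaluation of the model integral by the change of variables $y=T_{z_i}(x-z_i)/s_{\delta,i}$ together with \eqref{PI} and \eqref{201}, and the limit passage using \eqref{2000} and $\hat q_{\delta,i}\to q(x_{0,i})$. The only difference is cosmetic: you make the absorption of the $O(\ln|\ln\varepsilon|/|\ln\varepsilon|^2)$ perturbation explicit via the pointwise bound $|(a+E)^p_+-a^p_+|\le C(|a|+|E|)^{p-1}|E|$, where the paper simply writes this error as $o(1)$ after multiplication by $|\ln\varepsilon|^p/\varepsilon^2$.
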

\begin{proof}
	
It follows from  \eqref{q_i choice}, \eqref{203} and Proposition \ref{exist and uniq of w} that
\begin{equation*}
\begin{split}
&\frac{1}{\varepsilon^2}\int_{B_{\bar{\rho}}(x_{0,i})}(u_\varepsilon-q\ln\frac{1}{\varepsilon})^{p}_+dx\\
=&	\frac{|\ln\varepsilon|^p}{\varepsilon^2}\int_{B_{\bar{\rho}}(x_{0,i})}(w_\delta-q)^{p}_+dx\\
=&\frac{|\ln\varepsilon|^p}{\varepsilon^2}\int_{B_{Ls_{\delta,i}}(z_{i,\delta})}\left(V_{\delta, z_{i,\delta}, \hat{q}_{\delta,i}, z_{i,\delta}}(x)-\hat{q}_{\delta,i}+O\left( \frac{\ln|\ln\varepsilon|}{|\ln\varepsilon|^2}\right)\right)^{p}_+dx\\
=&\frac{|\ln\varepsilon|^p}{\varepsilon^2}s_{\delta,i}^2\left( \frac{\delta}{s_{\delta,i}}\right)^{\frac{2p}{p-1}}\int_{|T_{z_{i,\delta}}x|\leq 1}\phi(T_{z_{i,\delta}}x)^pdx+o(1)\\
=&\frac{|\ln\varepsilon|}{\delta^2}\delta^2|\phi'(1)|^{p-1}\left( \frac{\ln\frac{R}{s_{\delta,i}}}{\hat{q}_{\delta,i}}\right)^{p-1}|\phi'(1)|^{-p}\left( \frac{\ln\frac{R}{s_{\delta,i}}}{\hat{q}_{\delta,i}}\right)^{-p}\sqrt{det(K(z_{i,\delta}))}\cdot 2\pi|\phi'(1)|+o(1)\\
=& \frac{2\pi\hat{q}_{\delta,i} |\ln\varepsilon|}{\ln\frac{R}{s_{\delta,i}}}\cdot \sqrt{det(K(z_{i,\delta}))}+o(1) \\
\to& 2\pi q \sqrt{det(K)}(x_{0,i})\ \ \ \  \text{as}\ \delta\to0.
\end{split}
\end{equation*}

\end{proof}
The proof of Theorem \ref{thm1} is thus complete.

\section{Proof of Theorem \ref{thm01} and \ref{thm02}}
Consider the problem
\begin{equation}\label{eq01}
\begin{cases}
-\varepsilon^2\text{div}(K_H(x)\nabla u)=  \left(u-\left( \frac{\alpha|x|^2}{2}+\beta\right)\ln\frac{1}{\varepsilon}\right)^{p}_+,\ \ &x\in B_{R^*}(0),\\
u=0,\ \ &x\in\partial  B_{R^*}(0),
\end{cases}
\end{equation}
where $ \alpha,\beta  $ are any given constants satisfying $ \min_{x\in B_{R^*}(0)}\frac{\alpha|x|^2}{2}+\beta>0 $. Let $ v=u\backslash|\ln\varepsilon|  $ and $ \delta=\varepsilon|\ln\varepsilon|^{-\frac{p-1}{2}} $, then
\begin{equation}\label{eq02}
\begin{cases}
-\delta^2\text{div}(K_H(x)\nabla v)=  \left( v-\left( \frac{\alpha|x|^2}{2}+\beta\right) \right)^{p}_+,\ \ &x\in B_{R^*}(0),\\
v=0,\ \ &x\in\partial  B_{R^*}(0).
\end{cases}
\end{equation}
Note that \eqref{eq02} coincides with \eqref{eq1} with $ q=\frac{\alpha|x|^2}{2}+\beta $, $ K=K_H $ and $ \Omega=B_{R^*}(0) $. However, results of Theorem \ref{thm01} can not be deduced directly from those of Theorem \ref{thm1} since $ q^2\sqrt{det(K_H)} $ is a radial function and has no strict local extreme points in $ B_{R^*}(0) $. In this case one can also use the reduction procedure to construct solutions of \eqref{eq02}, by using the rotational symmetry of $ K_H, q $ and the domain $ B_{R^*}(0). $

Let $ h(r)=h(|x|)=q^2\sqrt{det(K_H)}(x) $ for any $ x\in B_{R^*}(0) $. We call $ z^* $ is a  $ strict $ $ local $ $ maximum $ $ (minimum) $ $ point $ of  $q^2\sqrt{det(K_H)}$  $ up $ $ to $ $ ratation $ in $ B_{R^*}(0) $, if $ |z^*| $ is a strict local maximum (minimum) point of $ h $ in $ (0,{R^*}) $.

Let $ z_1  $ be a strict local maximum (minimum) point of $ q^2\sqrt{det(K_H)} $ up to rotation. Define $ \mathcal{N}=B_{\bar{\rho}}(z_1) $. Then we can  construct solutions of \eqref{eq02} being of the form $ v_\delta=V_{\delta,z}+\omega_{\delta,z} $, where $ z $ is near $ z_1. $

Indeed, by Lemma \ref{coercive esti} and Proposition \ref{exist and uniq of w}, for any $ z\in \mathcal{N} $ and $ \delta $ sufficiently small there exists a unique $ \omega_{\delta,z}\in E_{\delta,z} $ such that $ Q_\delta L_\delta \omega_{\delta,z}=Q_\delta l_\delta+Q_\delta R_\delta(\omega_{\delta,z}). $ So the final step is to prove the existence of $ z=z_{\delta} $ near $ z_1 $ satisfying $ \nabla_zP_\delta(z_\delta)=0. $ We claim that
\begin{equation}\label{501}
P_\delta(z)=\frac{\pi\delta^2}{\ln\frac{R}{\varepsilon}}q^2\sqrt{det(K_H)}(z)+N_\delta(z),
\end{equation}
where $ N_\delta(z) $ is a $ O(\frac{\delta^2\ln|\ln\varepsilon|}{|\ln\varepsilon|^2}) -$perturbation term which is invariant under rotation. In fact, we can choose $ T_x^{-1}=\begin{pmatrix}
\cos\theta_x & -\sin\theta_x  \\
\sin\theta_x &\cos\theta_x
\end{pmatrix}\begin{pmatrix}
\frac{k}{\sqrt{k^2+|x|^2}} & 0  \\
0 &1
\end{pmatrix} $ in \eqref{T_z choice}, where $ (|x|,\theta_x) $ is the polar coordinate of $ x $. By the rotational symmetry of $ q $ and  the domain $ B_R(0) $, one can  prove that for every $ \theta\in[0,2\pi] $,  if we define $ \bar{z}=\bar{R}_\theta(z) $, then
\begin{equation*}
V_{\delta,\bar{z}}(\bar{R}_\theta(x))=V_{\delta,z}(x)\ \text{and} \ \omega_{\delta,\bar{z}}(\bar{R}_\theta(x))=\omega_{\delta,z}(x)    \ \ \text{for any}\  x\in B_R(0).
\end{equation*}
Hence one computes directly  that $ P_\delta(\bar{z})=P_\delta(z) $, i.e., $  P_\delta $ is a radially symmetric function. Note that $ q^2\sqrt{det(K_H)}(x)=\left( \frac{\alpha|x|^2}{2}+\beta\right)^2\cdot \frac{k}{\sqrt{k^2+|x|^2}} $ is also radially symmetric. Thus by  Proposition \ref{pro401} and \ref{order of main term},  we have \eqref{501}.

Since $ z_1 $ is a strict local maximum (minimum) point of $ q^2\sqrt{det(K_H)} $ up to rotation, it is not hard to prove  the existence of $ z_{\delta} $ near $ z_1 $ satisfying $ \nabla_zP_\delta(z_\delta)=0 $, which yields a solution $ v_\delta $ of \eqref{eq02}. Let $ u_\varepsilon=v_\delta|\ln\varepsilon| $, then $ u_\varepsilon $ is a solution of \eqref{eq01}. Moreover, by Lemma \ref{circulation}, one has
\begin{equation*}
\lim_{\varepsilon\to 0}\frac{1}{\varepsilon^2}\int_{B_{\bar{\rho}}(z_1)}\left( u_\varepsilon-q\ln\frac{1}{\varepsilon}\right)^{p}_+dx=2\pi q(z_1)\sqrt{det(K_H(z_1))}=\frac{k\pi(\alpha|z_1|^2+2\beta)}{\sqrt{k^2+|z_1|^2}}.
\end{equation*}
To conclude, we have
\begin{theorem}\label{thmA}
Let $ \alpha,\beta $ be two constants satisfying $ \min_{x\in B_{R^*}(0)}\left( \frac{\alpha|x|^2}{2}+\beta\right) >0 $ and $ z_1\in B_{R^*}(0)$ be a strict local maximum (minimum) point of $ \left( \frac{\alpha|x|^2}{2}+\beta\right)^2\cdot \frac{k}{\sqrt{k^2+|x|^2}} $ up to rotation. Then there exists $ \varepsilon_0>0 $, such that for any $ \varepsilon\in(0,\varepsilon_0] $, \eqref{eq01} has a solution $ u_\varepsilon $ satisfying the following properties:
\begin{enumerate}
	\item  Define $ A_\varepsilon=\left\{u_\varepsilon>\left( \frac{\alpha|x|^2}{2}+\beta\right)\ln\frac{1}{\varepsilon}\right\} $. Then $ \lim\limits_{\varepsilon\to 0}dist(A_\varepsilon, z_1)=0 $.
	\item $ \lim\limits_{\varepsilon\to 0}\frac{1}{\varepsilon^2}\int_{A_\varepsilon}\left( u_\varepsilon-\left( \frac{\alpha|x|^2}{2}+\beta\right)\ln\frac{1}{\varepsilon}\right)^p_+dx=\frac{k\pi(\alpha|z_1|^2+2\beta)}{\sqrt{k^2+|z_1|^2}}. $
	\item There exist  $ R_1,R_2>0 $ satisfying
	\begin{equation*}
	R_1\varepsilon\leq \text{diam}(A_\varepsilon)\leq R_2\varepsilon.
	\end{equation*}
	
\end{enumerate}
\end{theorem}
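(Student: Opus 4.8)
\textbf{Proof of Theorem \ref{thmA}.}

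The plan is to reduce \eqref{eq01} to \eqref{eq02} by the scaling $v=u/|\ln\varepsilon|$, $\delta=\varepsilon|\ln\varepsilon|^{-\frac{p-1}{2}}$, and to observe that \eqref{eq02} is exactly \eqref{eq1} with $q(x)=\frac{\alpha|x|^2}{2}+\beta$, $K=K_H$ and $\Omega=B_{R^*}(0)$. Since the hypothesis $\min_{B_{R^*}(0)}(\frac{\alpha|x|^2}{2}+\beta)>0$ guarantees condition (Q1), the entire machinery of Sections 3--5 applies: for every single point $z$ in the neighborhood $\mathcal{N}=B_{\bar\rho}(z_1)$ and every $\delta$ small, Lemma \ref{coercive esti} and Proposition \ref{exist and uniq of w} produce a unique correction $\omega_{\delta,z}\in E_{\delta,z}$, and any critical point of $P_\delta(z):=I_\delta(V_{\delta,z}+\omega_{\delta,z})$ yields a solution of \eqref{eq02}. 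The only point preventing a direct appeal to Theorem \ref{thm1} is that $q^2\sqrt{\det(K_H)}$ is radial, hence has no isolated strict extremum; so I would instead locate a critical point of $P_\delta$ by exploiting the rotational symmetry.

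The key step is to show that $P_\delta$ is radially symmetric. Choosing the Cholesky factor in the rotationally adapted form $T_x^{-1}=\bar R_{\theta_x}^{t}\,\mathrm{diag}(k/\sqrt{k^2+|x|^2},1)$ (consistent with \eqref{T_z choice}), rotation commutes with the construction: writing $\bar z=\bar R_\theta z$, the approximate profiles satisfy $V_{\delta,\bar z}(\bar R_\theta x)=V_{\delta,z}(x)$, and since $x\mapsto\omega_{\delta,z}(\bar R_\theta^{-1}x)$ solves the same fixed-point equation centered at $\bar z$, the uniqueness in Proposition \ref{exist and uniq of w} forces the same covariance for the correction, $\omega_{\delta,\bar z}(\bar R_\theta x)=\omega_{\delta,z}(x)$. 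Because $I_\delta$ is invariant under the orthogonal change of variables $x\mapsto\bar R_\theta x$ (as $K_H$, $q$ and $B_{R^*}(0)$ are all rotationally invariant), this gives $P_\delta(\bar z)=P_\delta(z)$, i.e.\ $P_\delta(z)=\tilde P_\delta(|z|)$ for a function of the radius alone. Feeding this into Propositions \ref{pro401} and \ref{order of main term} yields the expansion \eqref{501}, namely $P_\delta(z)=\frac{\pi\delta^2}{\ln\frac{R}{\varepsilon}}q^2\sqrt{\det(K_H)}(z)+N_\delta(z)$ with $N_\delta$ radial and of order $O(\delta^2\ln|\ln\varepsilon|/|\ln\varepsilon|^2)$.

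With radial symmetry in hand the critical-point problem collapses to one dimension: since $\nabla_z P_\delta(z)=\tilde P_\delta'(|z|)\,z/|z|$, it suffices to find $r_\delta$ near $|z_1|$ with $\tilde P_\delta'(r_\delta)=0$. The leading term is $\frac{\pi\delta^2}{\ln(R/\varepsilon)}h(r)$ with $h(r)=(\frac{\alpha r^2}{2}+\beta)^2\frac{k}{\sqrt{k^2+r^2}}$, which dominates the $O(\delta^2\ln|\ln\varepsilon|/|\ln\varepsilon|^2)$ remainder because $\frac{\ln|\ln\varepsilon|}{|\ln\varepsilon|}\to0$; as $|z_1|$ is a strict local extremum of $h$, a comparison on a small interval around $|z_1|$ shows $\tilde P_\delta$ retains an interior extremum, giving the desired $r_\delta$ and hence $z_\delta$ with $|z_\delta|=r_\delta$ and $z_\delta\to z_1$. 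Setting $u_\varepsilon=|\ln\varepsilon|(V_{\delta,z_\delta}+\omega_{\delta,z_\delta})$ solves \eqref{eq01}; property (1) then follows from the concentration of $V_{\delta,z_\delta}+\omega_{\delta,z_\delta}$ and $z_\delta\to z_1$, property (3) from the support and size estimates of Lemma \ref{lemA-5} so that $A_\varepsilon\subseteq B_{R_2\varepsilon}(z_\delta)$ and $\mathrm{diam}(A_\varepsilon)$ is comparable to $\varepsilon$, and property (2) from the circulation computation of Lemma \ref{circulation}, which evaluates to $2\pi q(z_1)\sqrt{\det(K_H(z_1))}=\frac{k\pi(\alpha|z_1|^2+2\beta)}{\sqrt{k^2+|z_1|^2}}$. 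The main obstacle throughout is precisely this rotational degeneracy: rather than reading off an extremum of the reduced energy on a compact set as in Theorem \ref{thm1}, one must first establish the exact covariance of both $V_{\delta,z}$ and $\omega_{\delta,z}$ under rotation---which hinges on the symmetry-adapted choice of $T_x$ together with the uniqueness in Proposition \ref{exist and uniq of w}---before the one-dimensional extremum argument can be carried out.
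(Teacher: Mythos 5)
Your proposal is correct and follows essentially the same route as the paper: the same scaling to \eqref{eq02}, the same rotationally adapted choice of $T_x^{-1}=\bar R_{\theta_x}^{t}\,\mathrm{diag}(k/\sqrt{k^2+|x|^2},1)$, the same covariance argument giving $V_{\delta,\bar z}(\bar R_\theta x)=V_{\delta,z}(x)$ and $\omega_{\delta,\bar z}(\bar R_\theta x)=\omega_{\delta,z}(x)$ (hence $P_\delta$ radial with expansion \eqref{501}), the same one-dimensional extremum argument for locating $z_\delta$, and the same appeal to Lemma \ref{circulation} and Lemma \ref{lemA-5} for properties (1)--(3). In fact you make explicit two points the paper leaves implicit, namely that the covariance of $\omega_{\delta,z}$ follows from the uniqueness in Proposition \ref{exist and uniq of w} and that the critical-point search reduces to finding a zero of $\tilde P_\delta'$ on an interval.
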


\bigskip

\textbf{Proof of Theorem \ref{thm01}}:
To prove Theorem \ref{thm01}, we define for every $ r_*\in (0,R^*) $, $ c>0 $ and
\begin{equation}\label{502}
\alpha=\frac{c}{4\pi k\sqrt{k^2+r_*^2}},\ \ \beta=\frac{\alpha}{2}(3r_*^2+4k^2).
\end{equation}
One computes directly that $ (r_*,0)$ is a strict minimum point of $ q^2\sqrt{det(K_H)}(x)=\left( \frac{\alpha|x|^2}{2}+\beta\right)^2\cdot \frac{k}{\sqrt{k^2+|x|^2}} $ up to rotation and that
\begin{equation*}
2\pi q\sqrt{det(K_H)}((r_*,0))=\frac{k\pi(\alpha r_*^2+2\beta)}{\sqrt{k^2+r_*^2}}=c.
\end{equation*}
Hence by Theorem \ref{thmA}, for any $ \varepsilon $ small there exists a solution $ u_\varepsilon $ of \eqref{eq01} concentrating near $ (r_*,0) $ with $ \alpha=\frac{c}{4\pi k\sqrt{k^2+r_*^2}} $ and $\frac{1}{\varepsilon^2} \int_{A_\varepsilon}\left( u_\varepsilon-\left( \frac{\alpha|x|^2}{2}+\beta\right)\ln\frac{1}{\varepsilon}\right)^p_+dx $ tending to $ c $. Define for any $ (x_1,x_2,x_3)\in B_{R^*}(0)\times\mathbb{R}, t\in\mathbb{R}  $
\begin{equation*}
\mathbf{w}_\varepsilon(x_1,x_2,x_3,t)= \frac{w_\varepsilon(x_1,x_2,x_3,t)}{k}\overrightarrow{\zeta},
\end{equation*}
where $ w_\varepsilon(x_1,x_2,x_3,t) $ is a helical function satisfying
\begin{equation}\label{503}
w_\varepsilon(x_1,x_2,0,t)=\frac{1}{\varepsilon^2}\left( u_\varepsilon(\bar{R}_{-\alpha|\ln\varepsilon| t}(x_1,x_2))-\left( \frac{\alpha|(x_1,x_2)|^2}{2}+\beta\right)\ln\frac{1}{\varepsilon}\right)^p_+.
\end{equation}
Direct computations show that $ w_\varepsilon(x_1,x_2,0,t) $ satisfies \eqref{vor str eq} and $ \mathbf{w}_\varepsilon $ is a left-handed helical vorticity field of \eqref{Euler eq2}. Moreover, $ w_\varepsilon(x_1,x_2,0,t) $ rotates clockwise around the origin with angular velocity $ \alpha|\ln\varepsilon| $. By \eqref{1001}, the circulation of $ \mathbf{w}_\varepsilon $ satisfies
\begin{equation*}
\iint_{A_\varepsilon}\mathbf{w}_\varepsilon\cdot\mathbf{n}d\sigma=\frac{1}{\varepsilon^2} \int_{A_\varepsilon}\left( u_\varepsilon-\left( \frac{\alpha|x|^2}{2}+\beta\right)\ln\frac{1}{\varepsilon}\right)^p_+dx\to c,\ \ \text{as}\ \varepsilon\to0.
\end{equation*}

It suffices to prove that the vorticity field $ \mathbf{w}_\varepsilon $ tends asymptotically to \eqref{1007} in sense of \eqref{1005}.  Define $ P(\tau) $ the intersection point of the curve  parameterized by \eqref{1007} and the $ x_1Ox_2 $ plane. Note that the helix \eqref{1007} corresponds uniquely to the motion of $ P(\tau) $. Taking $ s=\frac{b_1\tau}{k} $ into \eqref{1007}, one computes directly that  $ P(\tau) $ satisfies a 2D point vortex model
\begin{equation*}
\begin{split}
P(\tau)=\bar{R}_{\alpha'\tau}((r_*,0)),
\end{split}
\end{equation*}
where
\begin{equation*}
\alpha'=\frac{1}{\sqrt{k^2+r_*^2}}\left( a_1+\frac{b_1}{k}\right)=\frac{c}{4\pi k\sqrt{k^2+r_*^2}},
\end{equation*}
which is equal to $ \alpha $ in \eqref{502}. Thus by the construction, we  readily check that the support set of $ w_\varepsilon(x_1,x_2,0,|\ln\varepsilon|^{-1}\tau) $ defined by  \eqref{503} concentrates near $ P(\tau) $ as $ \varepsilon\to 0 $, which implies that, the vorticity field $ \mathbf{w}_\varepsilon $ tends asymptotically to \eqref{1007} in sense of \eqref{1005}.
The proof of Theorem \ref{thm01} is thus complete.

\textbf{Proof of Theorem \ref{thm02}}: The proof of Theorem \ref{thm02} is similar to that of Theorem \ref{thm01}, by constructing multiple concentration solutions $ u_\varepsilon $ of \eqref{eq01} (or equivalently, $ v_\delta $ of \eqref{eq02}) with polygonal symmetry. Indeed, let $ m\geq2 $ be an integer, $ r_*\in (0,R) $, $ c>0 $ and $ \alpha,\beta $ satisfying \eqref{502}. For any $ z_1\in B_{\bar{\rho}}((r_*,0)) $, define $ z_i=\bar{R}_{\frac{2(i-1)\pi}{m}}(z_1) $ for $ i=2,\cdots,m. $ Our goal is to construct solutions of \eqref{eq02} being of the form $  \sum_{i=1}^mV_{\delta,z_i}+\omega_{\delta} $. Using the symmetry of $ K_H, q $ and $ B_{R^*}(0) $, one can  prove that
\begin{equation*}
P_\delta(z_1,\cdots,z_m)=\frac{m\pi\delta^2}{\ln\frac{R}{\varepsilon}}q^2\sqrt{det(K_H)}(z_1)+N_\delta(z_1),
\end{equation*}
where $ N_\delta(z_1) $ is a $ O\left( \frac{\delta^2\ln|\ln\varepsilon|}{|\ln\varepsilon|^2}\right)  -$perturbation term which is invariant under rotation. The rest of the proof is exactly the same as
in that of Theorem \ref{thm01} and we omit the details.

\section{Appendix: Some basic estimates}
In this Appendix, we give some results  which have been repeatedly used before.

First, we give estimates of the Green's function. Let $ G(x,y) $ be the Green's function of $ -\Delta $ in $ \Omega $ with zero-Dirichlet boundary condition and $ h(x,y):=\frac{1}{2\pi}\ln \frac{1}{|x-y|}-G(x,y)$ be the regular part of
$ G(x,y) $, then
\begin{customthm}{A.1}[Lemma 4.1, \cite{De2}]\label{lemA-1}
For all $x,y \in \Omega$, there hold
\begin{equation}\label{B-1}
\begin{split}
h(x,y) \leq \frac{1}{2\pi}\ln  \frac{1}{\max \{ |x-y|, dist(x,\partial \Omega),dist(y,\partial \Omega) \} },
\end{split}
\end{equation}
\begin{equation}\label{B-2}
\begin{split}
h(x,y) \ge \frac{1}{2\pi}\ln\frac{1}{|x-y|+2\max \{ dist(x,\partial \Omega),dist(y,\partial \Omega) \} }.
\end{split}
\end{equation}
	
\end{customthm}
Note that the Green's function $ G(x,y) $ is only determined by the domain. To get \eqref{200-1} and \eqref{200}, one must get estimates of $ \frac{\partial h_{z_i}(T_{z_i}z_i, T_{z_i}z_i)}{\partial z_{i,\hbar}} $ and $ \frac{\partial G_{z_i}(T_{z_i}z_i, T_{z_i}z_j)}{\partial z_{i,\hbar}} $, which involve the $ C^1- $dependence of the Green's function on the domain. The following $ Hadamard $ $ variational $ $ formula $ gives  qualitative estimates of the first derivative of the Green's function on the domain, see \cite{Had, KU, Schi} for instance.

More precisely, let $ \Omega $ be a simply-connected bounded domain with smooth boundary and $ \Omega_\varepsilon $ be the perturbation of the domain $ \Omega $ whose boundary $ \partial \Omega_\varepsilon $ is expressed in such a way that
\begin{equation}\label{perturbation of domain}
\partial \Omega_\varepsilon=\{x+\varepsilon\rho(x)\mathbf{n}_x \mid x\in \partial \Omega\},
\end{equation}
where $ \rho\in C^{\infty}(\partial \Omega) $ and $ \mathbf{n}_x $ is
the unit outer normal to $ \partial \Omega $. 	For all $y,z \in \Omega$, we define
\begin{equation*}
\mathcal{\delta}_\rho G(y,z):=\lim_{\varepsilon\to 0}\varepsilon^{-1}(G_\varepsilon(y,z)-G(y,z)),
\end{equation*}
\begin{equation*}
\delta_\rho h(y,z):=\lim_{\varepsilon\to 0}\varepsilon^{-1}(h_\varepsilon(y,z)-h(y,z)),
\end{equation*}
where $ G_\varepsilon(y,z)=-\frac{1}{2\pi}\ln|y-z|-h_\varepsilon(y,z) $ is the Green's function of $ -\Delta $ in $ \Omega_\varepsilon $ with Dirichlet  boundary condition and $ h_\varepsilon(y,z) $ is the regular part of $ G_\varepsilon(y,z) $. We have
\begin{customthm}{A.2}[\cite{KU}]\label{lemA-6}
It holds that
\begin{equation}\label{Hadamard Varia}
\delta_\rho G(y,z)=-\delta_\rho h(y,z)=\int_{\partial \Omega}\frac{\partial G(y,x)}{\partial \mathbf{n}_x}\frac{\partial G(z,x)}{\partial \mathbf{n}_x}\rho(x)d\sigma_x,
\end{equation}
where $ d\sigma_x $	denotes the surface element of $ \partial \Omega. $
\end{customthm}

Let $ h_{\hat{x}}(x, y) $ be the regular part of Green's function $ G_{\hat{x}}(x, y) $ of $ -\Delta $ in $ T_{\hat{x}}(\Omega) $, where $ T \in C^\infty $ satisfies $ (T_{\hat{x}}^{-1})(T_{\hat{x}}^{-1})^t=K(\hat{x})$ for any $ \hat{x}\in\Omega $. Based on Lemmas \ref{lemA-1} and \ref{lemA-6}, one can get the following estimates.
\begin{customthm}{A.3}\label{lemA-2}
 For any $ Z\in \mathcal{M}$   and $ \delta $ sufficiently small,  there exists $ C>0 $ independent of $ \varepsilon $ and $ Z $ satisfying
\begin{equation}\label{A201}
\begin{split}
|h_{z_i}(T_{z_i}z_i, T_{z_i}z_j)|+|\nabla h_{z_i}(T_{z_i}z_i, T_{z_i}z_j)|\le C, \ \  1\leq i,j\leq m,
\end{split}
\end{equation}
\begin{equation}\label{A202}
\begin{split}
|G_{z_i}(T_{z_i}z_i, T_{z_i}z_j)|+|\nabla G_{z_i}(T_{z_i}z_i, T_{z_i}z_j)|\le C, \ \ 1\leq i\neq j\leq m,
\end{split}
\end{equation}
\begin{equation}\label{A203}
\bigg|\frac{\partial h_{z_i}(T_{z_i}z_i, T_{z_i}z_j)}{\partial z_{i,\hbar}}\bigg|\leq C,  \ \  1\leq i,j\leq m, \hbar=1,2,
\end{equation}
and
\begin{equation}\label{A204}
\bigg|\frac{\partial G_{z_i}(T_{z_i}z_i, T_{z_i}z_j)}{\partial z_{i,\hbar}}\bigg|\leq C,  \ \  1\leq i\neq j\leq m, \hbar=1,2.
\end{equation}
	
\end{customthm}

\begin{proof}
Since for any $ Z\in \mathcal{M} $ all the eigenvalues of $ T_{z_i} $ have positive upper  and lower bounds uniformly about $ Z $, we can find $ C_1>0 $ such that $ dist(T_{z_i}z_i, T_{z_i}(\Omega))\geq C_1>0. $ Hence by Lemma \ref{lemA-1}, we have $ |h_{z_i}(T_{z_i}z_i, T_{z_i}z_j)|\leq C. $ By the interior gradient estimates for the harmonic functions, we get  $ |\nabla h_{z_i}(T_{z_i}z_i, T_{z_i}z_j)|\le C. $ So \eqref{A201} holds. \eqref{A202} follows from \eqref{A201} and the definition of $ G_{z_i} $. Indeed, using the interior gradient estimates, one can also get that for any integer $ l\geq 1 $ and $ Z\in \mathcal{M} $,
\begin{equation*}
|\nabla^l h_{z_i}(T_{z_i}z_i, T_{z_i}z_j)|\le C.
\end{equation*}

Now we estimate $ \frac{\partial h_{z_i}(T_{z_i}z_i, T_{z_i}z_j)}{\partial z_{i,\hbar}} $. For $ 1\leq i\neq j\leq m $, we have
\begin{equation*}
\begin{split}
\frac{\partial h_{z_i}(T_{z_i}z_i, T_{z_i}z_j)}{\partial z_{i,\hbar}}=&\lim_{\varepsilon\to 0}\varepsilon^{-1}\left(h_{z_i+\varepsilon\mathbf{e}_\hbar}(T_{z_i+\varepsilon\mathbf{e}_\hbar}(z_i+\varepsilon\mathbf{e}_\hbar), T_{z_i+\varepsilon\mathbf{e}_\hbar}z_j) -h_{z_i}(T_{z_i}z_i, T_{z_i}z_j) \right) \\
=&\lim_{\varepsilon\to 0}\varepsilon^{-1}\left(h_{z_i+\varepsilon\mathbf{e}_\hbar}(T_{z_i+\varepsilon\mathbf{e}_\hbar}(z_i+\varepsilon\mathbf{e}_\hbar), T_{z_i+\varepsilon\mathbf{e}_\hbar}z_j) -h_{z_i+\varepsilon\mathbf{e}_\hbar}(T_{z_i}z_i, T_{z_i}z_j) \right)\\
&+\lim_{\varepsilon\to 0}\varepsilon^{-1}\left(h_{z_i+\varepsilon\mathbf{e}_\hbar}(T_{z_i}z_i, T_{z_i}z_j) -h_{z_i}(T_{z_i}z_i, T_{z_i}z_j) \right),
\end{split}
\end{equation*}
where $ \mathbf{e}_\hbar $ is the unit vector of $ x_\hbar-$axis. By \eqref{A201},
\begin{equation*}
\big|\lim_{\varepsilon\to 0}\varepsilon^{-1}\left(h_{z_i+\varepsilon\mathbf{e}_\hbar}(T_{z_i+\varepsilon\mathbf{e}_\hbar}(z_i+\varepsilon\mathbf{e}_\hbar), T_{z_i+\varepsilon\mathbf{e}_\hbar}z_j) -h_{z_i+\varepsilon\mathbf{e}_\hbar}(T_{z_i}z_i, T_{z_i}z_j) \right)\big|\leq C.
\end{equation*}
Note that $ T_{z_i+\varepsilon\mathbf{e}_\hbar}(\Omega) $ is a perturbation of $ T_{z_i}(\Omega) $.
Since $ T  $ is a $ C^{\infty} $ matrix-valued function and $ \partial \Omega $ is a smooth curve,  we find that $ \rho_{i,\hbar}(x): T_{z_i}(\Omega)\to \mathbb{R} $,  the normal displacement function defined by \eqref{perturbation of domain}, is smooth about $ x\in T_{z_i}(\Omega) $. Hence by Lemmas \ref{lemA-1} and \ref{lemA-6}, for any  $ Z\in \mathcal{M} $
\begin{equation*}
\begin{split}
&\big|\lim_{\varepsilon\to 0}\varepsilon^{-1}\left(h_{z_i+\varepsilon\mathbf{e}_\hbar}(T_{z_i}z_i, T_{z_i}z_j) -h_{z_i}(T_{z_i}z_i, T_{z_i}z_j) \right)\big|\\
=&\bigg| \int_{\partial (T_{z_i}(\Omega))}\frac{\partial G_{z_i}(T_{z_i}z_i,x)}{\partial \mathbf{n}_x}\frac{\partial G_{z_i}(T_{z_i}z_j,x)}{\partial \mathbf{n}_x}\rho_{i,\hbar}(x)d\sigma_x\bigg| \\
\leq& C\max_{ \partial (T_{z_i}(\Omega)) }|\nabla G_{z_i}(T_{z_i}z_i,\cdot)| \max_{ \partial (T_{z_i}(\Omega)) }|\nabla G_{z_i}(T_{z_i}z_j,\cdot)| \max_{ \partial (T_{z_i}(\Omega)) }|\rho_{i,\hbar}(\cdot)|\leq C.
\end{split}
\end{equation*}
So we get $ \big|\frac{\partial h_{z_i}(T_{z_i}z_i, T_{z_i}z_j)}{\partial z_{i,\hbar}}\big|\leq C $. \eqref{A204} can be proved similarly. Indeed,  one can also get that for any integer $ l\geq 1 $ and $ Z\in \mathcal{M} $,
\begin{equation*}
\bigg|\nabla^l \frac{\partial h_{z_i}(T_{z_i}z_i, T_{z_i}z_j)}{\partial z_{i,h}}\bigg|\le C.
\end{equation*}
\end{proof}

The following lemma shows the existence of $ \hat{q}_i $ satisfying \eqref{q_i choice}.
\begin{customthm}{A.4}\label{lemA-3}
For any $ Z\in \mathcal{M} $ and $ \delta $ sufficiently small,  there exist $ \hat{q}_i=\hat{q}_{\delta,i}(Z) $ satisfying
\begin{equation}\label{A301}
\hat{q}_i=q(z_i)+\frac{\hat{q}_i}{\ln\frac{R}{\varepsilon}}g_{z_i}(T_{z_i}z_i, T_{z_i}z_i)-\Sigma_{j\neq i}\frac{\hat{q}_j}{\ln\frac{R}{\varepsilon}}\bar{G}_{z_j}(T_{z_j}z_i, T_{z_j}z_j),
\end{equation}
where $ \bar{G}_{z_j}(x,y)=\ln\frac{ R}{|x-y|}-g_{z_j}(x,y)=2\pi G_{z_j}(x,y) $ for any $ x,y\in T_{z_j}(\Omega) $.

\end{customthm}
\begin{proof}
By Lemma \ref{lemA-2}, it is not hard to prove that there exists the unique $ \hat{q}_i $ satisfying \eqref{A301}. Moreover, we have $ \hat{q}_i=q(z_i)+O(\frac{1}{|\ln\varepsilon|}) $ and $ \frac{\partial \hat{q}_i}{\partial z_{i,h}}=O(1) $ for $ h=1,2. $
\end{proof}

This lemma shows the well-definedness of $ Q_\delta $ defined by \eqref{206}.
\begin{customthm}{A.5}\label{lemA-4}
$ Q_\delta $ is well-defined. Moreover, for any $ q\in[1,+\infty) $, $ u\in L^q(\Omega) $ with $ supp(u)\subseteqq \cup_{j=1}^mB_{Ls_{\delta,j}}(z_j)$ for some $ L>1 $, there holds for some $ C>0 $
\begin{equation*}
||Q_\delta u||_{L^q}\leq C||u||_{L^q}.
\end{equation*}
	
\end{customthm}
\begin{proof}
By \eqref{coef of C}, we already know that there exists a unique $ C_{j,h} $ satisfying \eqref{207}.

By the assumption, for any $ q\in[1,+\infty) $, $ l=1,\cdots,m, h=1,2, $ one has
\begin{equation*}
\begin{split}
C_{l,h}=&O\left( |\ln\varepsilon|^{p+1}\sum_{i=1}^m\sum_{\hbar=1}^2\int_{\cup_{j=1}^mB_{Ls_{\delta,j}}(z_j)}u\frac{\partial V_{\delta,Z,i}}{\partial z_{i,\hbar}}\right) \\
=&O\left( |\ln\varepsilon|^{p+1}\sum_{i=1}^m\sum_{\hbar=1}^2||u||_{L^q(\Omega)}||\frac{\partial V_{\delta,Z,i}}{\partial z_{i,\hbar}}||_{L^{q'}(\cup_{j=1}^mB_{Ls_{\delta,j}}(z_j))}\right) \\
=&O\left( |\ln\varepsilon|^{p+1}||u||_{L^q(\Omega)}\sum_{i=1}^m\sum_{\hbar=1}^2\frac{s_{\delta,j}^{\frac{2}{q'}-1}}{|\ln\varepsilon|}\right) \\
=&O(|\ln\varepsilon|^{p}\varepsilon^{1-\frac{2}{q}})||u||_{L^q(\Omega)},
\end{split}
\end{equation*}
where $ q' $ denotes the conjugate exponent of $ q $. Hence we get
\begin{equation*}
\begin{split}
&\sum_{j=1}^m\sum_{h=1}^2C_{j,h}\frac{\partial}{\partial z_{j,h}}(-\delta^2\text{div}(K(z_j)\nabla   V_{\delta, Z,j}))\\
=&\sum_{j=1}^m\sum_{h=1}^2C_{j,h}(V_{\delta, z_{j}, \hat{q}_{\delta,j}, z_{j}}-\hat{q}_{\delta,j})^{p-1}_+\left( \frac{\partial V_{\delta, z_{j}, \hat{q}_{\delta,j}, z_{j}}}{\partial z_{j,h}}-\frac{\partial \hat{q}_{\delta,j}}{\partial z_{j,h}}\right) \\
=&O\left( \sum_{j=1}^m\sum_{h=1}^2C_{j,h}|\ln\varepsilon|^{-(p-1)}\cdot \frac{s_{\delta,j}^{\frac{2}{q}-1}}{|\ln\varepsilon|}\right) \\
=&O(1)||u||_{L^q(\Omega)}\ \ \ \ \text{in}\ L^q(\Omega).
\end{split}
\end{equation*}
Thus $ ||Q_\delta u||_{L^q(\Omega)}\leq C||u||_{L^q(\Omega)} $.

\end{proof}

\begin{customthm}{A.6}\label{lemA-5}
There exists a constant $ L > 1 $ such that for $ \varepsilon $ small
\begin{equation*}
V_{\delta,Z}-q>0,\ \ \  \ \text{in}\ \   \cup_{j=1}^m\left( T_{z_j}^{-1}B_{\left( 1-L\frac{\ln|\ln\varepsilon|}{|\ln\varepsilon|}\right) s_{\delta,j}}(0)+z_j\right),
\end{equation*}
\begin{equation*}
V_{\delta,Z}-q<0,\ \ \  \ \text{in}\ \  \Omega\backslash\cup_{j=1}^m\left( T_{z_j}^{-1}B_{Ls_{\delta,j}}(0)+z_j\right).
\end{equation*}
	
\end{customthm}

\begin{proof}
The proof is similar to that of Lemma A.1 in \cite{CLW}. If $ |T_{z_j}(x-z_j)|\leq \left( 1-L\frac{\ln|\ln\varepsilon|}{|\ln\varepsilon|}\right) s_{\delta,j}   $, then by \eqref{203} and $ \phi'(1)<0 $ we have
\begin{equation*}
\begin{split}
V_{\delta,Z}-q(x)=&V_{\delta,z_j,\hat{q}_{\delta,j}, z_j}(x)-\hat{q}_{\delta,j}+O\left( \frac{\ln|\ln\varepsilon|}{|\ln\varepsilon|^2}\right) \\
= &\frac{\hat{q}_{\delta,j}}{|\phi'(1)|\ln\frac{R}{s_{\delta,j}}}\phi\left( \frac{|T_{z_j}(x-z_j)|}{s_{\delta,j}}\right) +O\left( \frac{\ln|\ln\varepsilon|}{|\ln\varepsilon|^2}\right) >0,
\end{split}
\end{equation*}
if $ L $ is sufficiently large.

On the other hand, if $ \tau>0 $ small and $ |T_{z_j}(x-z_j)|\geq s_{\delta,j}^\tau $ for any $ j=1,\cdots,m $, then
\begin{equation*}
\begin{split}
V_{\delta,Z}-q(x)=&\sum_{j=1}^m\left( V_{\delta,z_j,\hat{q}_{\delta,j}, z_j}(x)-\frac{\hat{q}_{\delta,j}}{\ln\frac{R}{s_{\delta,j}}}g_{z_j}(T_{z_j}x,T_{z_j}z_j)\right) -q(x)\\
\leq &\sum_{j=1}^m\frac{\hat{q}_{\delta,j}\ln\frac{R}{s_{\delta,j}^\tau}}{\ln\frac{R}{s_{\delta,j}}}-C\\
\leq &\tau \sum_{j=1}^m \hat{q}_{\delta,j}-C<0.
\end{split}
\end{equation*}
If $ Ls_{\delta,j}\leq |T_{z_j}(x-z_j)|\leq s_{\delta,j}^\tau $, then by Lemma \ref{lemA-2}, we have
\begin{equation*}
\begin{split}
&V_{\delta,Z}-q(x)\\
=&V_{\delta,z_j,\hat{q}_{\delta,j}, z_j}(x)-\frac{\hat{q}_{\delta,j}}{\ln\frac{R}{s_{\delta,j}}}g_{z_j}(T_{z_j}x,T_{z_j}z_j)-q(x)+\sum_{i\neq j}\left(\frac{\hat{q}_{\delta,i}}{\ln\frac{R}{s_{\delta,i}}}\bar{G}_{z_i}(T_{z_i}x,T_{z_i}z_i)\right)\\
=&V_{\delta,z_j,\hat{q}_{\delta,j}, z_j}(x)-q(z_j)-\frac{\hat{q}_{\delta,j}}{\ln\frac{R}{s_{\delta,j}}}g_{z_j}(T_{z_j}z_j,T_{z_j}z_j)+\sum_{i\neq j}\left(\frac{\hat{q}_{\delta,i}}{\ln\frac{R}{s_{\delta,i}}}\bar{G}_{z_i}(T_{z_i}z_j,T_{z_i}z_i)\right)+O(\varepsilon^\tau)\\
=&V_{\delta,z_j,\hat{q}_{\delta,j}, z_j}(x)-q(z_j)-\frac{\hat{q}_{\delta,j}}{\ln\frac{R}{\varepsilon}}g_{z_j}(T_{z_j}z_j,T_{z_j}z_j)+\sum_{i\neq j}\left(\frac{\hat{q}_{\delta,i}}{\ln\frac{R}{\varepsilon}}\bar{G}_{z_i}(T_{z_i}z_j,T_{z_i}z_i)\right)+O\left( \frac{\ln|\ln\varepsilon|}{|\ln\varepsilon|^2}\right) \\
=&V_{\delta,z_j,\hat{q}_{\delta,j}, z_j}(x)-\hat{q}_{\delta,j}+O\left( \frac{\ln|\ln\varepsilon|}{|\ln\varepsilon|^2}\right) \\
\leq &-\frac{\hat{q}_{\delta,j}\ln L}{\ln\frac{R}{s_{\delta,j}}}+O\left( \frac{\ln|\ln\varepsilon|}{|\ln\varepsilon|^2}\right) <0,
\end{split}
\end{equation*}
if we choose $ L $ sufficiently large.

\end{proof}

\subsection*{Acknowledgments:}

\par
D. Cao was supported by NNSF of China (grant No. 11831009). J. Wan was supported by NNSF of China (grant No. 12101045) and  Beijing
Institute of Technology Research Fund Program for Young Scholars (No.3170011182016).

\end{document}